\documentclass[a4paper,10pt]{article}



\usepackage{amsmath}
\usepackage{amsthm}
\usepackage{amssymb}

\setcounter{tocdepth}{1}

\usepackage{sectsty}
\chapternumberfont{\sffamily\bfseries\LARGE\sectionrule{3ex}{3pt}{-1ex}{1pt}}
\sectionfont{\rmfamily\bfseries\sectionrule{3ex}{0pt}{-1ex}{1pt}}
\subsectionfont{\rmfamily\mdseries\large}
\subsubsectionfont{\rmfamily\mdseries}

\usepackage{mathpazo}

\allowdisplaybreaks

\usepackage{tikz}
\usetikzlibrary{decorations.markings}
\usetikzlibrary{matrix}

\usepackage{etex}

\usepackage{graphicx}
\usepackage{mathrsfs}
\usepackage{color}
\usepackage[all]{xy}

\usepackage{ctable}
\usepackage{multirow}

\usepackage{microtype}



\newcommand{\R}{\mathbb R}
\newcommand{\Q}{\mathbb Q}

\newcommand{\N}{\mathbb N}

\newcommand{\CC}{\mathcal C}

\newcommand{\VV}{\mathcal{V}}

\newcommand{\EE}{\mathcal E}

\newcommand{\id}{{\mathsf{id}}}

\newcommand{\X}{\mathsf{X}}

\newcommand{\Rplus}{[0,\infty]}

\newcommand{\zerounit}{\mathbf{0}}
\newcommand{\oneunit}{\mathbf{1}}

\DeclareFontFamily{U}{mathb}{}
\DeclareFontShape{U}{mathb}{m}{n}{
  <5> <6> <7> <8> <9> <10>
  <10.95> <12> <14.4> <17.28> <20.74> <24.88>
  mathb10
  }{}
\DeclareSymbolFont{mathb}{U}{mathb}{m}{n}
\DeclareMathSymbol{\mminus}{\mathbin}{mathb}{"01}

\newcommand{\toppoint}{\top}
\newcommand{\bottompoint}{\bot}

\newcommand{\leftadjoint}{\dashv}

\newcommand{\colim}[2]{\operatorname{colim}_{#1}(#2)}
\renewcommand{\lim}[2]{\operatorname{lim}_{#1}(#2)}
\newcommand{\presheaf}[1]{\widehat{#1}}
\DeclareFontFamily{U}{mathx}{}
\DeclareFontShape{U}{mathx}{m}{n}{
  <5> <6> <7> <8> <9> <10>
  <10.95> <12> <14.4> <17.28> <20.74> <24.88>
  mathx10
  }{}
\DeclareSymbolFont{mathx}{U}{mathx}{m}{n}
\DeclareMathAccent{\widecheck}{0}{mathx}{"71}
\newcommand{\copre}[1]{\widecheck{#1}}

\newcommand{\HK}{\EE_{\mathrm{HK}}}

\DeclareMathOperator{\dd}{d}


\newcommand{\op}{{\mathrm{op}}}




\newcommand{\Set}{\mathsf{Set}}

\newcommand{\GMet}{\mathrm{GMet}}
\newcommand{\GMetCo}{\GMet_{\mathrm{cocomp}}}


\newcommand{\isomorphic}{\simeq}
\newcommand{\isometric}{\cong}


\renewcommand{\phi}{\varphi}
\renewcommand{\epsilon}{\varepsilon}




\DeclareMathOperator{\Aim}{Aim}

 \DeclareMathOperator{\Hom}{Hom}

 \DeclareMathOperator{\End}{End}

\DeclareMathOperator{\Ob}{Ob}

\DeclareMathOperator{\Yoneda}{\mathcal{Y}}
\DeclareMathOperator{\Fix}{Fix}



\newtheorem{thm}{Theorem}
\newtheorem*{introthm}{Theorem}
\newtheorem{prop}[thm]{Proposition}
\newtheorem{cor}[thm]{Corollary}
\newtheorem{lemma}[thm]{Lemma}

\theoremstyle{definition}

\newcommand{\definition}{\textbf}

\usetikzlibrary{decorations.markings}
\makeatletter
\tikzset{nomorepostaction/.code={\let\tikz@postactions\pgfutil@empty}}
\makeatother

\usepackage{hyperref}


\begin{document}
\title{Tight spans, Isbell completions and semi-tropical modules.}
\author{Simon Willerton}

\maketitle

\begin{abstract}
In this paper we consider generalized metric spaces in the sense of Lawvere and the categorical Isbell completion construction.  We show that this is an analogue of the tight span construction of classical metric spaces, and that the Isbell completion coincides with the directed tight span of Hirai and Koichi.  The notions of categorical completion and cocompletion are related to the existence of semi-tropical module structure, and it is shown that the Isbell completion (hence the directed tight span) has two different semi-tropical module structures.\end{abstract}

\tableofcontents

\section*{Introduction}
This paper grew out of a desire to understand whether the tight span of a metric space could be understood in terms of the enriched category theory approach to metric spaces.  This led to understanding a link between two apparently unrelated constructions of Isbell, namely the tight span of metric spaces and the Isbell completion of categories; this is turn led, via categorical completeness,  to connections with tropical algebra.  In this introduction the main ideas of Isbell completion, semi-tropical algebra and tight spans will be given.   The intention is that this paper should be readable by mathematicians interested in metric spaces or tropical algebra, without much category theory background, and to allow them to see how category theoretic methods give interesting insight in this case.

\subsection*{The Isbell completion of a generalized metric space}
Lawvere~\cite{Lawvere:MetricSpaces} observed that a metric space can be viewed as something similar to a category and that from that perspective there is a natural generalization --- generalized metric space --- which means a set $X$ with a `distance' function $\dd\colon X\times X \to [0,\infty]$ such that $\dd(x,x)=0$ and $\dd(x,y)+\dd(y,z)\ge \dd(x,z)$ for all $x,y,z\in X$, with no further conditions like symmetry imposed.  Generalized metric spaces can be thought of as `directed' metric spaces.  From a category theoretic point of view, generalized metric spaces are precisely $[0,\infty]$-enriched categories and so much of the machinery of category theory can be utilized to study them.  In this paper we will look at the `Isbell completion' for generalized metric spaces.  

The Isbell completion of an ordinary category seems to be partly folklore, I learnt of it from Tom Leinster~\cite{BartlettLeinster:nCafe}.  The idea of the Isbell completion is that one embeds the original category in a certain cateogry of `presheaves' on the category, thus obtaining an `extension' of the original category.  This can be generalized reasonably straight forwardly to enriched categories.  One example worthy of note here is the Dedekind-MacNeille completion of a poset.  A poset $P$ can be considered as a category enriched over `truth values' and taking the Isbell completion in this case gives the Dedekind-MacNeille completion of $P$ (see~\cite{Trimble:mathoverflow}); in the particular example of the rational numbers $\Q$ with the usual ordering $\le$, the Isbell completion is the set of Dedekind cuts, i.e.~the real numbers $\R$ with the usual ordering.

In the case of a generalized metric space $X$ the Isbell completion $I(X)$, defined%
\footnote{As this paper was being finished Dusko Pavlovic showed me that he had considered~\cite{Pavlovic:QuantitativeConceptAnalysis} the Isbell completion of `proxets' which are basically the same as generalized metric spaces; his context and motivation are rather different to those here, however.}
 in Section~\ref{Section:DefnIsbell}, has the form of a set of pairs of functions  $f,g\colon X\to [0,\infty]$ satisfying certain conditions that allow us to think of $(f,g)$ as a point in an extension of $X$ where we consider $f(x)$ as the distance to $(f,g)$ from $x$ and $g(x)$ as the distance to $x$ from $(f,g)$.  In other words, every point in the Isbell completion $I(X)$ is specified by its distance to and from every point of $X$.  It should be added though that $f$ and $g$ do actually determine one another, so that $I(X)$ can actually be thought of a subset of the space of functions on $X$, for instance by just considering $f$.  In category theoretic terms the Isbell completion is the invariant part of the Isbell adjunction between presheaves and op-co-presheaves on $X$.

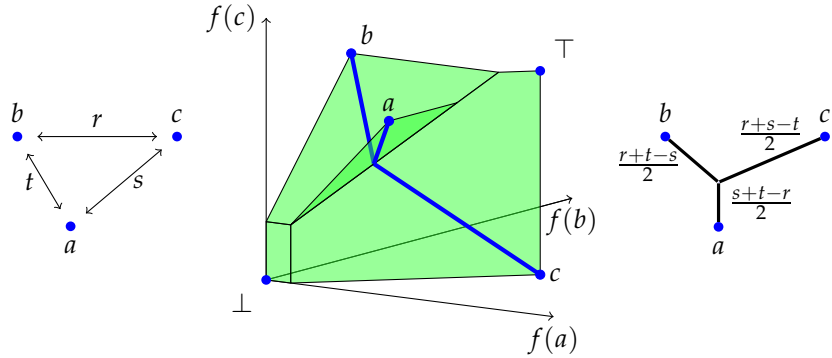
\begin{figure}[tb]
\begin{center}
  \begin{tikzpicture}[scale=0.7,baseline=-3cm)]
    \node [circle,draw=blue,fill=blue,thick, inner sep=0pt,minimum size=1mm,label=above:$b$](x) at (0,0) {};
  \node [circle,draw=blue,fill=blue,thick, inner sep=0pt,minimum size=1mm,label=above:$c$](y) at (3,0) {};
  \node [circle,draw=blue,fill=blue,thick, inner sep=0pt,minimum size=1mm ,label=below:$a$](z) at (1,-1.7) {};
 \begin{scope}[<->,shorten >=2mm,shorten <=2mm]
  \path (x) edge node[above] {$r$} (y);
  \path (z) edge node[pos=0.5,left] {$t$} (x);
  \draw (y) edge node[pos=0.5,right] {$s$} (z);
  \end{scope}
  \end{tikzpicture}
\begin{tikzpicture}[scale=0.4,line join=round]
\draw[style =thin,opacity=1,arrows=->](0,0)--(2*5.033,2*1.355) node[below ] {$f(b)$};
\draw[style =thin,arrows=->](0,0)--(1.3*7.27,1.3*-.938) node[below] {$f(a)$}; ;

\draw[style =thin,arrows=->](0,0)--(0,8.687) node[left] {$f(c)$};
\filldraw[draw=black,fill=green!80,fill opacity=0.5](0,0)--(.808,-.104)--(.808,1.826)--(0,1.931)--cycle;
\filldraw[draw=black,fill=green!80,fill opacity=0.5](.808,1.826)--(7.643,6.884)--(2.796,7.51)--(0,1.931)--cycle;
\filldraw[draw=black,fill=green!80,fill opacity=0.5](.808,1.826)--(7.643,6.884)--(9.01,6.931)--(9.01,.174)--(.808,-.104)--cycle;

\draw[draw=blue,style =ultra thick](2.796,7.51)--(3.542,3.849);

\filldraw[draw=black,draw opacity=1,fill=green!80,fill opacity=0.5](.808,1.826)--(6.276,5.873)--(4.039,5.27)--cycle;

\draw[draw=blue,style =ultra thick](4.039,5.27)--(3.542,3.849);
\draw[draw=blue,style =ultra thick](9.01,.174)--(3.542,3.849);

\draw [draw=blue,fill=blue] (4.039,5.27) circle (.15cm); 
\node at (4.039,5.27) [anchor=south] {$a$};
\draw [draw=blue,fill=blue] (2.796,7.51) circle (.15cm); 
\node at (2.796,7.51) [anchor=south west] {$b$};
\draw [draw=blue,fill=blue] (9.01,.174) circle (.15cm); 
\node at (9.01,.174) [anchor=west] {$c$};
\node [circle,draw=blue,fill=blue,thick, inner sep=0pt,minimum size=1mm ,label=above right:$\toppoint$](T) at (9.01,6.931){};
\node [circle,draw=blue,fill=blue,thick, inner sep=0pt,minimum size=1mm ,label=below left:$\bottompoint$](B) at (0,0) {};
\end{tikzpicture}
%
  \begin{tikzpicture}[scale=0.7,baseline=-3cm]
    \node [circle,draw=blue,fill=blue,thick, inner sep=0pt,minimum size=1mm,label=above:$b$](x) at (0,0) {};
  \node [circle,draw=blue,fill=blue,thick, inner sep=0pt,minimum size=1mm ,label=above:$c$](y) at (3,0) {};
  \node [circle,draw=blue,fill=blue,thick, inner sep=0pt,minimum size=1mm,label=below:$a$](z) at (1,-1.71) {};
  \node [circle,draw=blue,fill=blue,thick, inner sep=0pt,minimum size=0mm ]
    (o) at (1,-0.866) {};
  \draw [-,draw=blue,very thick] (o) edge node[pos=0.4,left] {$\tfrac{r+t-s}{2}$\;} (x) 
                edge  node[pos=0.5,above] {$\tfrac{r+s-t}{2}$} (y)
                edge  node[midway,right] {$\tfrac{s+t-r}{2}$} (z);
  \end{tikzpicture}
  \\
\end{center}
\caption{The classical metric space $A_{r,s,t}$, its Isbell completion (see Proposition~\ref{Prop:ArstCalculation}) 
and its tight span with the edge lengths marked.}
\label{Fig:ArstIsbellCompletion}
\end{figure}

For example, consider the generalized metric space of two points, with distances $r$ and $s$ from one to the other, its Isbell completion is an $r\times s$ rectangle with an asymmetric version of the $L_\infty$-metric on it.  This is pictured in Figure~\ref{Fig:ArsIsbellCompletion}.  Consider also a classical metric space with three points, its Isbell completion is the union of four parts of planes (Propostion~\ref{Prop:ArstCalculation}) and can be embedded in $\R^3$ with an asymmetric $L_\infty$-metric on it.  This is pictured in Figure~\ref{Fig:ArstIsbellCompletion}.  

In ordinary category theory there are notions of limits and colimits: we talk about categories being complete if they have all limits and cocomplete if they have all colimits; we talk about functors being continuous if they preserve limits and cocontinuous if they preserve colimits.  In enriched category theory there are corresponding notions, sometimes referred to as weighted limits and weighted colimits.  These specialize to notions of (categorical) limit and colimit for generalized metric spaces, although these should not be confused with usual notions of limit for metric spaces, this is just a coincidence of terminology, although you may refer to the work of Rutten~\cite{Rutten:WeightedColimitsMetricSpaces} if you want to see how they are related.  We show that the Isbell completion of a generalized metric space is both complete and cocomplete in this categorical sense: Theorem~\ref{Thm:FixRLComplete}, Theorem~\ref{Thm:FixLRComplete} and Theorem~\ref{Thm:YonedaCompleteCocomplete} combine to give the following.
\begin{introthm}
The Isbell completion $I(X)$ of a generalized metric space $X$ is both complete and cocomplete and the Yoneda embedding $X\to I(X)$ is both continuous and cocontinuous.
\end{introthm}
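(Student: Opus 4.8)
The plan is to realise $I(X)$ inside two ambient categories that are visibly complete and cocomplete, extracting limits from one of them and colimits from the other. Write the Isbell adjunction as $L\colon\widehat{X}\to\widecheck{X}^{\op}$ and $R\colon\widecheck{X}^{\op}\to\widehat{X}$ with $L\dashv R$, where $\widehat{X}=[X^{\op},[0,\infty]]$ is the $[0,\infty]$-enriched category of presheaves on $X$ and $\widecheck{X}=[X,[0,\infty]]$ that of co-presheaves. First I would record that $\widehat{X}$, $\widecheck{X}$, and hence $\widecheck{X}^{\op}$, are complete and cocomplete, since $[0,\infty]$ is and weighted (co)limits in these functor categories are computed pointwise. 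By construction $I(X)$ is the invariant part of the adjunction, hence equivalent both to the full subcategory $\Fix(RL)\subseteq\widehat{X}$ of presheaves $\phi$ on which the unit $\phi\to RL\phi$ is an isometry and to the full subcategory $\Fix(LR)\subseteq\widecheck{X}^{\op}$ of co-presheaves $\psi$ on which the counit $LR\psi\to\psi$ is an isometry; the representables lie in both, so the Yoneda embedding $\Yoneda\colon X\to I(X)$ makes sense, and it is $x\mapsto X(-,x)$ in the first picture and $x\mapsto X(x,-)$ in the second.

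For completeness I would show that $\Fix(RL)$ is closed under weighted limits inside $\widehat{X}$; then the weighted limit of any diagram in $I(X)$ exists and agrees with the one computed in $\widehat{X}$. This would be automatic if the Isbell monad $RL$ were idempotent, so that $\Fix(RL)$ were a reflective subcategory of $\widehat{X}$ (reflective subcategories being closed under every weighted limit that exists in the ambient category), but I would not assume that a priori: instead I would write the limiting object down explicitly, as a pair of functions built from the diagram and the weight by suitable pointwise infima and suprema, and verify, using the $\inf$/$\sup$ formulas that define $L$, $R$ and the hom-objects of $I(X)$, that this pair again lies in $I(X)$ and satisfies the universal property. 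Cocompleteness is handled dually on the co-presheaf side: $\Fix(LR)$ is closed under weighted colimits in $\widecheck{X}^{\op}$ — automatic if the Isbell comonad $LR$ is idempotent (making $\Fix(LR)$ coreflective there), otherwise by the analogous explicit construction — so $I(X)\simeq\Fix(LR)$ is cocomplete. Both incarnations are genuinely needed: $I(X)$ sits reflectively in $\widehat{X}$ but only coreflectively in $\widecheck{X}^{\op}$, so limits are visible from the presheaf side and colimits from the co-presheaf side.

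Finally I would deduce continuity and cocontinuity of $\Yoneda$. The presheaf Yoneda embedding $X\to\widehat{X}$ preserves all weighted limits existing in $X$ — a standard fact, immediate from the defining property of a weighted limit and the pointwise computation of limits in $\widehat{X}$ — and since weighted limits in $I(X)\cong\Fix(RL)$ are exactly those of $\widehat{X}$, the composite $\Yoneda\colon X\to I(X)$ is continuous. Dually, $x\mapsto X(x,-)$ is the opposite of the Yoneda embedding $X^{\op}\to\widecheck{X}$, which preserves limits, so $X\to\widecheck{X}^{\op}$ preserves weighted colimits, and colimits in $I(X)\cong\Fix(LR)$ are those of $\widecheck{X}^{\op}$, whence $\Yoneda$ is cocontinuous. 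Assembling the three conclusions proves the theorem. I expect the one substantial obstacle to be the two closure statements in the middle step — that the fixed-point subcategory is closed under weighted limits in $\widehat{X}$ and under weighted colimits in $\widecheck{X}^{\op}$ — which in this metric setting I would attack by a direct, if somewhat intricate, manipulation of infima and suprema of distance functions; the rest is routine variance bookkeeping (in particular, that the fixed-point equivalence intertwines the two Yoneda embeddings) and standard enriched category theory.
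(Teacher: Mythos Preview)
Your approach is correct, but you have inverted the paper's decomposition in an interesting way. The paper extracts \emph{cocompleteness} from the presheaf side and \emph{completeness} from the op-co-presheaf side: it shows $RL\dashv\iota$ makes $\Fix_{RL}(X)$ reflective in $\presheaf X$, so colimits exist there by computing in $\presheaf X$ and then applying the reflector $RL$ (giving the explicit formula $\colim{W}{J}=RL\,\colim{W}{\iota J}$), and dually for limits via $\Fix_{LR}(X)$ coreflective in $\copre X^\op$. You instead use the same reflectivity/coreflectivity to conclude that $\Fix_{RL}(X)$ is \emph{closed under limits} in $\presheaf X$ and $\Fix_{LR}(X)$ is \emph{closed under colimits} in $\copre X^\op$. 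Both routes are valid consequences of the same adjunctions; yours makes the continuity/cocontinuity of the Yoneda map essentially automatic (since (co)limits in $I(X)$ literally agree with those in the ambient), while the paper's route yields the explicit reflection formulas that are then used to describe the two semi-tropical module structures on $I(X)$.

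Your hedging about idempotency is unnecessary and is the one place where you make extra work for yourself. In the $[0,\infty]$-enriched setting every adjunction is idempotent: from $L\dashv R$ one always has $LRL\simeq L$ and $RLR\simeq R$, and since $\presheaf X$ and $\copre X^\op$ are skeletal these are equalities, so $RLRL=RL$ on the nose. The paper records this immediately (end of Section~\ref{Section:MetricSpaces} and Theorem~\ref{Thm:IsbellAdjunction}). With that in hand your ``substantial obstacle'' evaporates: the closure statements are the standard facts that reflective subcategories are closed under limits and coreflective subcategories under colimits, and no direct manipulation of infima and suprema is needed. Your remark that ``both incarnations are genuinely needed'' is slightly off for (co)completeness alone --- either reflective inclusion already gives both --- but it is exactly right for the continuity/cocontinuity of $\Yoneda$, since $\Yoneda\colon X\to\presheaf X$ preserves limits but not colimits, and dually for $\copre X^\op$.
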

As alluded to above, an important role is played by presheaves, a presheaf is a function $f\colon X\to [0,\infty]$ satisfying a certain condition and the set of all presheaves forms a generalized metric space $\presheaf{X}$.  Abusing notation slightly, the Isbell completion $I(X)$ can be thought of as a subspace of the space of presheaves $\presheaf{X}$, via $\iota^1\colon I(X)\to \presheaf{X}$.  In fact there is a retraction $RL\colon \presheaf{X}\to I(X)$.  Colimits are very easy to calculate in $\presheaf{X}$ and it is possible to calculate a colimit in $I(X)$ by first including into $\presheaf{X}$, calculating the colimit and then retracting back to $I(X)$.  There is an analogous story with op-co-presheaves $\copre{X}^\op$, so there is an inclusion $\iota^2\colon I(X)\to \copre{X}^\op$ and a retraction $LR\colon\copre{X}^\op\to I(X)$ such that limits in $I(X)$ can be calculated using this.  This will be useful when we come to the semi-tropical module structures.

%
%
%

\subsection*{Semi-tropical algebra}
There has been interest from various directions in recent years in the area of `tropical mathematics' (also known as `idempotent' or `min-plus' mathematics).  This involves working with the ``tropical'' semi-ring consisting of  (possibly some variant of)  the extended real numbers $(-\infty,\infty]$ with $\min$ as the addition and $+$ as the multiplication.  This is a semi-ring as the addition does not have inverses.  In fact, the tropical semi-ring is a semi-field as all the elements, except for the additive unit $\infty$, have multiplicative inverses.
Here we are interested in what we will call the  `semi-tropical' semi-ring consisting of $[0,\infty]$ with $\min$ and~$+$.  Details are given in Section~\ref{Section:SemiTropical}.

A module over a semi-ring is a commutative monoid with an action of the semi-ring defined as you would define the action of a ring on an abelian group.  A module over the semi-tropical semi-ring $[0,\infty]$ will be called  a `semi-tropical module' and such a thing can be thought of being equipped with a `semi-flow': points can be moved forward in time by a positive amount but can not be moved back in time.

We are interested in considering a generalized metric space with a semi-tropical module structure that is compatible with the metric.  There are two compatibilities that are of interest and these give us the notions of metric semi-tropical module and co-metric semi-tropical module.  This can now be linked in to the category theory above.  Combining Theorem~\ref{Thm:CocompleteCometricModule} and Theorem~\ref{Thm:CompleteMetricModule} we get the following theorem.
\begin{introthm}
A skeletal generalized metric space is finitely complete if and only if it can be given the structure of a metric semi-tropical module.  

Similarly, a skeletal generalized metric space is finitely cocomplete if and only if it can be given the structure of a co-metric semi-tropical module.  
\end{introthm}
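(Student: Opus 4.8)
The plan is to prove both biconditionals from a single one by passing to the opposite space, and to prove that one by unravelling what finite completeness means for a $[0,\infty]$-enriched category. First I would record the reduction: in any $[0,\infty]$-category, every finite weighted limit can be built out of the terminal object, binary products, and cotensors by objects of $[0,\infty]$, and each of these three is itself a finite weighted limit. (Conical equalisers contribute nothing here, since a $[0,\infty]$-category has no distinct parallel morphisms, so finite conical limits reduce to just the terminal object and binary products.) Explicitly, the terminal object $\toppoint$ is characterised by $\dd(y,\toppoint)=0$ for all $y$, the binary product $x\wedge y$ by $\dd(z,x\wedge y)=\max\bigl(\dd(z,x),\dd(z,y)\bigr)$, and the cotensor $\{a,x\}$ of $x$ by $a\in[0,\infty]$ by $\dd(y,\{a,x\})=\dd(y,x)\ominus a$, where $b\ominus a:=\max(b-a,0)$ is the internal hom of the monoidal closed category $[0,\infty]$. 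Hence ``$X$ is finitely complete'' is equivalent to ``$X$ has a terminal object, all binary products, and all cotensors''.

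For the forward direction of the first biconditional --- this is Theorem~\ref{Thm:CompleteMetricModule} --- suppose the skeletal space $X$ carries this limit data. Skeletality forces isometric objects to be equal, so binary product becomes an honest operation $x\oplus y:=x\wedge y$, the terminal object an honest element $\zerounit:=\toppoint$, and cotensoring an honest action $a\odot x:=\{a,x\}$. The commutative monoid laws for $(\oplus,\zerounit)$ follow from uniqueness in the universal property of products; the unit law $0\odot x=x$ (the multiplicative unit of the semi-tropical semi-ring is $0$) holds because $0$ is the monoidal unit of $[0,\infty]$; the action law $(a+b)\odot x=a\odot(b\odot x)$ follows from $\{a+b,x\}\isometric\{a,\{b,x\}\}$; the axiom $\infty\odot x=\zerounit$ holds because $\dd(y,\{\infty,x\})=\dd(y,x)\ominus\infty=0$ forces $\{\infty,x\}=\toppoint$; and the two distributivity laws reduce, through the distance formulas above, to the numerical identities $c\ominus\min(a,b)=\max(c\ominus a,\,c\ominus b)$ and $\max(c,c')\ominus a=\max(c\ominus a,\,c'\ominus a)$, both valid in $[0,\infty]$. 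So $(X,\oplus,\zerounit,\odot)$ is a semi-tropical module, and the three displayed distance formulas are exactly the metric-compatibility conditions, making it a \emph{metric} semi-tropical module.

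The converse direction is then immediate: from a metric semi-tropical module structure the compatibility conditions give $\dd(y,\zerounit)=0$, $\dd(z,x\oplus y)=\max\bigl(\dd(z,x),\dd(z,y)\bigr)$ and $\dd(z,a\odot x)=\dd(z,x)\ominus a$, which say precisely that $\zerounit$ is terminal, that $x\oplus y$ is the binary product of $x$ and $y$, and that $a\odot x$ is the cotensor of $x$ by $a$; hence $X$ is finitely complete. Finally, the second biconditional is the first applied to the opposite generalized metric space $X^{\op}$, with $\dd^{\op}(x,y):=\dd(y,x)$: it is skeletal exactly when $X$ is, it is finitely complete exactly when $X$ is finitely cocomplete (limits in $X^{\op}$ being colimits in $X$), and a co-metric semi-tropical module structure on $X$ is by definition a metric semi-tropical module structure on $X^{\op}$. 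This gives Theorem~\ref{Thm:CocompleteCometricModule}.

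I expect the one genuinely delicate point to be the reduction in the first paragraph: one must verify, for the specific base $[0,\infty]$, that finite weighted limits are generated by the terminal object, binary products, and cotensors, and --- relatedly --- decide that ``finitely complete'' in the statement must be read as ``having finite conical limits together with all cotensors'' (the cotensors, although indexed by the infinitely many objects of $[0,\infty]$, still counting as finite limits in the relevant sense). Once this is fixed, the rest --- the monoid and module axioms and the matching with the compatibility conditions --- is a routine verification whose only non-formal ingredients are the two $\ominus$-versus-$\max$ identities above.
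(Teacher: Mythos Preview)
Your proposal is correct and follows essentially the same route as the paper: identify binary (co)products with the monoid operation, (co)tensors (the paper's ``fat out points'') with the $[0,\infty]$-action, and the (initial) terminal object with the unit, then check the module axioms and the metric compatibility; the converse reads the same identifications backwards. The paper proves the cocomplete/co-metric direction first and declares the complete/metric direction analogous, whereas you do the opposite and then dualise via $X^{\op}$ --- this is purely cosmetic.

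The one genuine difference is in the reduction step you flag as delicate. You invoke the general enriched-category fact that finite weighted limits are generated by finite conical limits and cotensors, together with the observation that equalisers are vacuous over $[0,\infty]$. The paper instead gives a self-contained argument specific to this base: any weighted colimit equals the colimit of the same data pulled back along the discretisation $D^\delta\to D$ (since the colimit formula is a supremum over objects only), and over a finite discrete shape one builds the colimit inductively as $\bigl(\colim{W|_{D'}}{J|_{D'}}\bigr)\sqcup O\bigl(J(d),W(d)\bigr)$. This is Lemma~\ref{Lem:FatOutAndCoproducts} in the paper, and it sidesteps any appeal to general theorems about generation of weighted limits --- so your worry about whether cotensors ``count'' as finite limits does not arise there: a fat out point is literally a colimit over a one-point shape, hence finite in the paper's sense. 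Your argument is the more conceptual one for a reader fluent in enriched category theory; the paper's is more elementary and avoids the need to justify the conical-plus-cotensor decomposition over this particular base.
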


As we know that the Isbell completion of a generalized metric space is both complete and cocomplete we find (Corollaries~\ref{Cor:IsbellCoMetricModule} and~\ref{Cor:IsbellMetricModule}) that it is a semi-tropical module in two distinct ways, in a metric way --- with monoid multiplication $\boxplus$ and action $\boxdot$ --- and in a co-metric way --- with monoid multiplication $\oplus$ and action $\odot$.  We can illustrate this easily for the case of the two-point asymmetric metric space $N_{r,s}$ in Figure~\ref{Fig:BothModuleStructures}.  A more complicated example would be given by a three-point space as in Figure~\ref{Fig:ArstIsbellCompletion}.
  
\begin{figure}[ht]
\begin{center}
\begin{tikzpicture}[scale=1.5]
\draw[fill=green!80,fill opacity =0.5,thin] (0,0) rectangle (3,2);
\node [circle,draw=blue,fill=blue,thick, inner sep=0pt,minimum size=1mm ,label=above right:$b$](b) at (3,0) {};
\node [circle,draw=blue,fill=blue,thick, inner sep=0pt,minimum size=1mm ,label=above right:$a$](a) at (0,2) {};
\node [outer sep=0pt,inner sep=0pt] (p) at (1,1.2) {};
\node (q) at (2,0.5) {};
\node (pq) at (1,0.5) {};
\node (pqm) at (2,1.2) {};
\draw[postaction={nomorepostaction,decorate,
                    decoration={markings,mark=at position 0.5 with {\arrow{>}}}
                   },
                   draw=black, very thick] (p) -- +(0.8,0.8);
\draw[postaction={nomorepostaction,decorate,
                    decoration={markings,mark=at position 0.5 with {\arrow{>}}}
                   },
                   draw=black,  very thick] (p)+(0.8,0.8) -- (3,2);
\draw[postaction={nomorepostaction,decorate,
                    decoration={markings,mark=at position 0.5 with {\arrow{>}}}
                   },
                   draw=black, very thick] (p) -- +(-1,-1);
\draw[postaction={nomorepostaction,decorate,
                    decoration={markings,mark=at position 0.5 with {\arrow{>}}}
                   },
                   draw=black, very thick] (p)+(-1,-1) -- (0,0);
\node [circle,draw=blue,fill=blue,thick, inner sep=0pt,
minimum size=1mm, 
 ,label=above:{$p$}] at (p) {};
\node [circle,draw=blue,fill=blue,thick, inner sep=0pt,minimum size=1mm ,label=below:{$q$}] at (q) {};
\node [circle,draw=blue,fill=blue,thick, inner sep=0pt,minimum size=1mm ,label=below :{$p \oplus q$}] at (pq) {};
\node [circle,draw=blue,fill=blue,thick, inner sep=0pt,minimum size=1mm ,label=above :{$p \boxplus q$}] at (pqm) {};
\node (tauodotp) at (2,2.8) {$\tau\odot p$} ;
\draw[>=latex,->](tauodotp) -- (2.1,2.05);
\node (tauboxdotp) at (-0.5,1) {$\tau\boxdot p$} ;
\draw[>=latex,->](tauboxdotp) -- (.2,.47);
\end{tikzpicture}
\end{center}
\caption{The two semi-tropical module structures on $I(N_{r,s})$, with $\tau\in[0,\infty]$.}
\label{Fig:BothModuleStructures}
\end{figure}
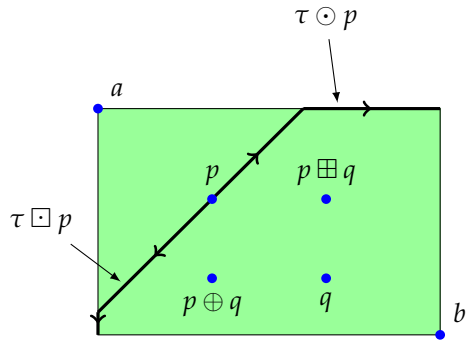

There is a way of computing these actions.  Recall that there is the inclusion $\iota^1\colon I(X)\to \presheaf X$, and the retraction $RL\colon \presheaf{X}\to I(X)$.  There is also a straightforward (co-metric) semi-tropical module structure on $\presheaf{X}$ defined pointwise, namely for $f,f'\in \presheaf{X}$ and $\tau\in [0,\infty]$
  \[(f\uplus f' )(x):= \min(f(x),g(x)),\qquad (\tau\star f)(x):=\tau+f(x).\]
The co-metric semi-tropical module structure on $I(X)$ is then calculated by including into $\presheaf{X}$, calculating the semi-tropical action and then retracting back onto $I(X)$.  Notationally we get
 \begin{align*}
 p\oplus q &:=RL(\iota^1 p \uplus \iota^1 q)
 &
\tau\odot p &:=  RL(\tau \star \iota^1 p).
 \end{align*}
The other, metric, semi-tropical module structure is similarly calculated using $\iota^2\colon I(X)\to \copre{ X}^\op$, and the retraction $LR\colon \copre{ X}^\op\to I(X)$.

\subsection*{The tight span}
Every classical metric space $\X$ has a `tight span' $T(\X)$.  This has been discovered independently on several occasions and goes by many names.  Isbell called it the injective envelope~\cite{Isbell:SixTheoremsInjective}, Dress called it the tight span~\cite{Dress:TreesTightExtensions} and Chrobak and Larmore called it the convex hull~\cite{ChrobakLarmore}.  Two good introductions to the theory of tight spans are \cite{DressEtAl:TTheoryOverview} and \cite{Epstein:HyperconvexityTalk}.  One naive way of thinking of the tight span is that it is a `small' contractible metric space in which the metric space embeds isometrically, but the full story is somewhat richer than that.  An example of the three-point metric space is pictured  in Figure~\ref{Fig:ArstIsbellCompletion}.

In general the tight span of a finite metric space is a finite cell complex of dimension at most half the number of points in $\X$, i.e.~$\dim T(\X)\le \frac{1}{2} \# \X$.
A metric space embeds into a tree if and only if the tight span is a tree, because of this the tight span has become a useful tool in phylogenetic analysis~\cite{DressEtAl:TTheoryOverview}.  The tight span also has applications in group cohomology~\cite{Dress:TreesTightExtensions}, server placement on a network~\cite{ChrobakLarmore} and multicommodity flow~\cite{Karzanov:Minimum0Extensions}.  Develin and Sturmfels~\cite{DevelinSturmfels:Tropical,DevelinSturmfelsErrata} showed connections with tropical mathematics, showing in some cases that the tight span was related to the tropical hull of the metric.

The tight span is related to the Isbell completion in the following way (Theorem~\ref{Thm:TightSpanInIsbell}).  For a classical metric space $\X$ the tight span $T(\X)$ is the largest subset of of the Isbell completion $I(\X)$ which contains $\X$ and for which the restriction of the generalized metric is actually a classical metric.

Having understood the connection between Isbell's tight span and the Isbell completion, I discovered that, motivated by applications in multicommodity flow, Hirai and Koichi~\cite{HiraiKoichi:DirectedDistances} had constructed an analogue of the tight span for what they called finite `directed' metric spaces, by which they meant a space with the structure of a classical metric space, except that the symmetry axiom $\dd(x,y)=\dd(y,x)$ is not imposed.  For a directed metric space $X$ we will denote their `directed tight span' by $\HK(X)$.  In Theorem~\ref{Thm:HiraiKoichi} we see that for directed metric spaces the directed tight span and the Isbell completion coincide: $\HK(X)\isometric I(X)$.  From the semi-tropical module structures described above, we immediately get the following.
\begin{introthm}
For a directed metric space, the directed tight span of Hirai and Koichi can be given two canonical semi-tropical module structures.
\end{introthm}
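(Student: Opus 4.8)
The plan is to deduce the statement at once from two facts already in place: the identification of the directed tight span with the Isbell completion, and the existence of the two semi-tropical module structures on the Isbell completion. Concretely, Theorem~\ref{Thm:HiraiKoichi} supplies an isometry $\phi\colon\HK(X)\isometric I(X)$ of generalized metric spaces, while Corollaries~\ref{Cor:IsbellCoMetricModule} and~\ref{Cor:IsbellMetricModule} equip $I(X)$ with a co-metric semi-tropical module structure $(\oplus,\odot)$ and a metric one $(\boxplus,\boxdot)$. Transporting these along $\phi$ produces the two claimed structures on $\HK(X)$.

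First I would note that a directed metric space in the sense of Hirai and Koichi is in particular a generalized metric space in the sense of Lawvere --- a skeletal, finite one with all distances finite --- so that the Isbell completion $I(X)$, the Yoneda embedding $X\to I(X)$, and the two module structures of Corollaries~\ref{Cor:IsbellCoMetricModule} and~\ref{Cor:IsbellMetricModule} are all available. Those structures rest in turn, via Theorem~\ref{Thm:YonedaCompleteCocomplete} together with Theorem~\ref{Thm:CocompleteCometricModule} and Theorem~\ref{Thm:CompleteMetricModule}, on the completeness and cocompleteness of $I(X)$; nothing new needs to be proved there.

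Next I would carry out the transport. Setting $p\oplus' q:=\phi^{-1}\bigl(\phi(p)\oplus\phi(q)\bigr)$, $\tau\odot' p:=\phi^{-1}\bigl(\tau\odot\phi(p)\bigr)$, and similarly for $\boxplus$ and $\boxdot$, one checks that the commutative-monoid and semi-module axioms carry over verbatim --- they are purely equational and $\phi$ is a bijection --- and that the metric, respectively co-metric, compatibility with the distance function is preserved because $\phi$ preserves $\dd$ in both of its arguments. This endows $\HK(X)$ with two semi-tropical module structures. If explicit formulas in Hirai and Koichi's own coordinates are wanted, one can instead conjugate the pointwise descriptions $p\oplus q=RL(\iota^1 p\uplus\iota^1 q)$, $\tau\odot p=RL(\tau\star\iota^1 p)$ and their duals through the coordinate comparison established in the proof of Theorem~\ref{Thm:HiraiKoichi}, obtaining the two operations directly on $\HK(X)$.

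The one genuine point to address --- the reason the word \emph{canonical} occurs in the statement --- is that the isometry $\phi$ of Theorem~\ref{Thm:HiraiKoichi} is itself canonical: it identifies the two spaces with no arbitrary choices, since both sides are presented as the same sets of pairs $(f,g)$ of functions on $X$. Consequently the transported structures do not depend on any choice, and one must confirm that the operations obtained are genuinely the ``metric'' and ``co-metric'' structures of Figure~\ref{Fig:BothModuleStructures} and not artefacts of the transport. I expect this bookkeeping to be the main --- and essentially the only --- obstacle; once it is settled, the corollary is immediate.
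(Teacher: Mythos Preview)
Your proposal is correct and follows essentially the same route as the paper: the introductory theorem is deduced immediately from Theorem~\ref{Thm:HiraiKoichi} together with Corollaries~\ref{Cor:IsbellCoMetricModule} and~\ref{Cor:IsbellMetricModule}. One small simplification: Theorem~\ref{Thm:HiraiKoichi} actually proves the literal equality $\HK(X)=I(X)$ as subspaces of $\bigl([0,\infty]^X\times[0,\infty]^X\bigr)_{\mathrm{asym}}$, not merely an isometry, so no transport along $\phi$ is needed and your worry about canonicity dissolves --- the two module structures on $I(X)$ \emph{are} the two module structures on $\HK(X)$.
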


\subsection*{Acknowledgements}
This paper was inspired by a short conversation at the $n$-Category Caf\'e~\cite{BartlettLeinster:nCafe}; I would like to thank Bruce Bartlett, Tom Leinster and Andrew Stacey for their contributions.

 
\section{Metric spaces as enriched categories, briefly}
\label{Section:MetricSpaces}
In this section we give a give introduction to the idea of viewing metric spaces as enriched categories.  This gives rise to Lawvere's notion of generalized metric space.  We give the relevant concepts from enriched category theory in this context, one of the most important concepts here being that of the generalized metric space of presheaves; a presheaf being the appropriate notion of scalar-valued function on a space.  References for this include~\cite{Lawvere:MetricSpaces,Kelly:EnrichedCategoryTheory, Borceux:Handbook2}.  We finish the section by mentioning adjunctions and state the idempotent property exhibited by metric space adjunctions.
 
Recall that a \definition{small category} $\CC$ consists of a set $\Ob(\CC)$ of objects together with the following data, satisfying the so-called `associativity' and `unit' axioms.
 \begin{enumerate}
 \item For each pair $c,c'\in \Ob(\CC)$ there is a \emph{set} of morphisms $\Hom_\CC(c,c')$.
 \item For each $c\in \Ob(\CC)$ there is an identity morphism; equivalently, there is a specified \emph{function} $\{\ast\}\to \Hom_\CC(c,c)$.
 \item For each triple $c,c',c''\in \Ob(\CC)$ there is a \emph{function}, known as \definition{composition}, $\Hom_\CC(c,c')\times\Hom_\CC(c',c'')\to \Hom_\CC(c,c'')$.
 \end{enumerate}

 The definition relies on the category of sets with its Cartesian product $\times$ and the unit object $\{\ast\}$ for this Cartesian product.  It turns out that for a category $\VV$ which in similar to $\Set$ in that it has a tensor product with a unit object, we can define the notion of a category enriched in $\VV$, or a $\VV$-category.  Whilst such a thing has a set of objects, every other instance of ``set'' and ``function'' in the above definition is replaced by ``object of $\VV$'' and ``morphism in $\VV$''.

   In particular, we can use the category $\Rplus$ which has as its objects the set $[0,\infty]$ of the non-negative reals together with infinity, and has a morphism $a\to b$ precisely if $a\ge b$.  The tensor product on $\Rplus$ is taken to be addition $+$ and so the unit object is $0$.  To tie-in more with standard metric space notation, for an $\Rplus$-category $X$ we will write the pair $(X,\dd_X)$ instead of $(\Ob(X),\Hom_X)$.  This means that an $\Rplus$-category, which we will call a \definition{generalized metric space}, consists of a set $X$ together with the following data.
 \begin{enumerate}
 \item For each pair $x,x'\in X$ there is a \emph{number} $\dd_X(x,x')\in [0,\infty]$.
 \item For each $x\in X$ we have the \emph{inequality} $0\ge \dd_X(x,x)$.
 \item For each triple $x,x',x''\in X$ we have the \emph{inequality} $\dd_X(x,x')+\dd_X(x',x'') \ge \dd_X(x,x'')$.
 \end{enumerate}
 It transpires that the associativity and unit conditions are vacuous in this case.  The second condition above can, of course, be more sensibly written as $\dd_X(x,x)=0$.

 It should be clear from this definition that a classical metric space is such a thing, hence the name {``generalized metric space''}.  The key difference from the definition of a classical metric space, however, is that symmetry of the metric is not imposed, so in general for $x,y\in X$ we have $\dd_X(x,y)\ne \dd_X(y,x)$, which is why such a thing can be thought of as a \emph{directed} metric space as  Hirai and Koichi \cite{HiraiKoichi:DirectedDistances} might say.  The other, less important differences, are that $\infty$ is allowed as a distance and that two distinct points can be a distance $0$ apart.

In a generalized metric space $X$, if two points $x,x'\in X$ are mutually a distance $0$ from each other, i.e.~$\dd(x,x')=0=\dd(x',x)$, then we say that they are \definition{isomorphic} and write $x\isomorphic x'$.  Isomorphic points cannot be distinguished by metric means.  The space $X$ is said to be \definition{skeletal} if there are no distinct points which are isomorphic, meaning $x\isomorphic x'$ implies $x=x'$.  By definition, classical metric spaces are skeletal. 

 The upshot of all this is that a lot of category-theoretic machinery can be applied to the theory of metric spaces; for one recent example see Leinster's definition of the magnitude of a metric space~\cite{Leinster:Magnitude}.

 We now come to the correct notion of function from this point of view.  The right notion of map between generalized metric spaces is what we will refer to as a {`short map'}, but which we could also call a `distance non-increasing map'; this is the translation to the case of metric spaces of the notion of enriched functor.  A \definition{short map} between generalized metric spaces is a function $f\colon X\to Y$ such that 
  \[\dd_X(x,x')\ge \dd_Y(f(x),f(x'))\quad\text{ for all }x,x'\in X.\]  
  
  For a generalized metric space $X$, the \definition{opposite} generalized metric space $X^\op$ is defined to be the generalized metric space with the same set of points, but with the generalized metric reversed: $\dd_{X^\op}(x,x'):=\dd_{X}(x',x)$.  Clearly, a classical metric space is equal to its opposite.

 Next there is the standard notion of functional (or scalar-valued function) on a generalized metric space; this is  a \definition{presheaf}, i.e.~a short map $f\colon X^\op \to \Rplus$, where $\Rplus$ is interpreted as a generalized metric space with the asymmetric metric, meaning that the set of points is $[0,\infty]$ and the metric, for $a,b\in[0,\infty]$, is given by $\dd_{\Rplus}(a,b):=\max(b-a,0)$.  As the last function is so useful we will write it as $ b\mminus a$ and call it the \definition{truncated difference}.  This all means that a presheaf on $X$ is set function $f\colon X\to [0,\infty]$ such that for all $x,x'\in X$ we have
   \[\dd_X(x,x')\ge f(x)  \mminus f(x').\]
We write $\presheaf X$ for the generalized metric space of all presheaves with the metric given by
  \[\dd_{\presheaf X}(f,g):=\sup_{x\in X } \left( g(x)\mminus f(x)\right).\]
 The generalized metric space $X$ maps isometrically in to its space of presheaves via  ``$x$ goes to the distance-to-$x$ functional'':
   \[
    \Yoneda\colon X\to \presheaf X;
     \qquad
     x\mapsto \dd_X({-},x).
  \]
This is the generalized metric space \definition{Yoneda map}.

There is similarly a generalized metric space $\copre X$ of \definition{co-presheaves}, that is short maps $X\to \Rplus$, which can be thought of as set maps $f\colon X \to [0,\infty]$ satisfying, for all $x,x'\in X$,
    \[\dd_X(x,x')\ge f(x')  \mminus f(x),\]
with the generalized metric given by
  \[\dd_{\copre X}(f,g):=\sup_{x\in X } \left( g(x)\mminus f(x)\right).\] 
However, the more appropriate space in this paper is the opposite generalized metric space $\copre X^\op$, which we can call the \definition{space of op-co-presheaves}.  We have the  \definition{co-Yoneda map} given by ``$x$ goes to the distance-from-$x$ function'':
   \[
     \Yoneda\colon X\to \copre X^\op;
     \qquad
     x\mapsto\dd_X(x,{-}).
  \]
If $X$ is a classical metric space then a presheaf is the same thing as a co-presheaf, so $\presheaf{X}\isometric \copre{X}$; however, $\presheaf{X}$ and $\copre{X}$ are not classical metric spaces, as they are not symmetric, so will be different to $\copre{X}^\op$ the space of op-co-presheaves.

It is worth noting here that the space $\presheaf X$ of presheaves and the space $\copre X^\op$ of op-co-presheaves are skeletal.  For instance, if we have presheaves $f,g\in \presheaf X$ then $\dd_{\presheaf X}(f,g)=0$ if and only if $f(x)\ge g(x)$ for all $x\in X$; so $f\isomorphic g$ implies that $f(x)= g(x)$ for all $x\in X$ and hence $f=g$.  If we start with a non-skeletal generalized metric space then the Yoneda map is not going to be injective, although it will be an isometry: isomorphic points in $X$ will map to the same presheaf.  The image of $X$ under the Yoneda map can thus be thought of as a skeletization of $X$.

We will now mention one further concept from enriched category theory which will be key in this paper.  An \definition{adjunction}, written $L\leftadjoint R$, is a pair of short maps $L\colon X\to Y$ and $R\colon Y\to X$ between generalized metric spaces $X$ and $Y$ such that 
  \[\dd_Y(L(x),y)=\dd_X(x,R(y))\quad \text{for all }x\in X,\ y\in Y.\]
An equivalent condition is that 
  \[\dd_Y(LR(y),y)=0\ \text{for all }y\in Y\quad\text{and}\quad  \dd_X(x,RL(x))=0\ \text{for all }x\in X.\]
It follows easily from this definition that, given such an adjunction, the composites $RL\colon X\to X$  and $LR\colon Y\to Y$ are both idempotent, meaning
\[RLRL(x)\isomorphic RL(x)\ \text{for all }x\in X\quad\text{and}\quad LRLR(y)\isomorphic LR(y)\ \text{for all }y\in Y.\]
As an aside, this means that all monads and comonads on generalized metric spaces are idempotent.

\section{Isbell's tight span in category-theoretic language}
In this section we state Isbell's original definition of the tight span, his `injective envelope', and then reformulate it in a fashion amenable to a category theoretic analysis.  We then compare the two approaches in some simple examples, seeing that the two approaches, but not the final answers, are indeed somewhat different.  Throughout this section we use a sans serif symbol such as $\X$ to denote a \emph{classical} metric space.  Whilst in the literature the tight span and the injective envelope are used to mean the same thing, here I will use the terminology to distinguish two isometric metric spaces.

\subsection{Two definitions of the tight span}
\label{Section:DefnOfTightSpan}
Isbell, in his 1964 paper \cite{Isbell:SixTheoremsInjective}, constructs for a classical metric space $\X$ the `injective envelope' $\EE(\X)$ in the following way.  Firstly define $\Aim(\X)$ the \definition{aim} of $\X$ by $f\in\Aim(\X)$ if $f$ is  a real-valued function on $\X$ which satisfies
  \[f(x)+f(y)\ge \dd(x,y)\quad \text{for all }x,y\in \X. \tag{$*$}\label{Eqn:comp}\]
A function $f\in\Aim(\X)$ is \definition{pointwise-minimal} if whenever $g\in\Aim(\X)$ satisfies  $g(x)\le f(x)$ for all $x\in \X$ then $g=f$.   We then define the \definition{injective envelope}
$\EE(\X)$ to be the classical metric space of pointwise-minimal functions in $\Aim(\X)$, with the metric on $\EE(\X)$ given by
  \[\dd_{\EE(\X)}(f,g)=\sup_{x\in \X}|f(x)-g(x)|.\]

That is the standard definition of the injective envelope or tight span, however, we can characterize it in the following way which is amenable to the category theoretic approach.  Let the \definition{tight span} $T(\X)\subset \presheaf \X$ be the subset of presheaves on $\X$ such that a presheaf $f\colon \X\to [0,\infty]$ is in $T(\X)$
if
 \[
   f(x)=\sup_{y \in \mathfrak{\X}}\bigl(\dd(x,y)\mminus f(y)\bigr) \quad \text{for all }x\in \X.
   \tag{$\dagger$}\label{Eqn:SymDuality}
  \]
The metric on $T(\X)$ is induced from the generalized metric on $\presheaf \X$.  A calculation shows that the metric on $T(\X)$ is actually symmetric.  

These two definitions are equivalent in that there is a canonical isomorphism of metric spaces $\EE(\X)\isometric T(\X)$: indeed, in some sense, they have the same set of points; however, they are defined as subspaces of different spaces.  The details of the proof that they are isometric are given by Dress~\cite{Dress:TreesTightExtensions};
the subtle part is showing that the metrics actually agree.   Theorem~\ref{Thm:HiraiKoichi} generalizes this result and the proof given there is a generalization of Dress' proof.

Before comparing the injective envelope $\EE(\X)$ and the tight span $T(\X)$ in some examples, it is worth noting two things about the tight span.  The first thing is that $\X$ maps isometrically in to its tight-span $T(\X)$ via the Yoneda map:
\[
  \Yoneda\colon \X\to T(\X); \qquad x\mapsto \dd({-},x).
\]
The second is that one can interpret the condition~\eqref{Eqn:SymDuality} as follows.  The idea is that a function $f$ represents a point, say $p_f$, in an extension of $\X$ with $\dd(p_f,x)=f(x)$ for all $x\in \X$.  The supremum condition~\eqref{Eqn:SymDuality} on $f$ is saying that ``for every point $x\in \X$ the point $p_f$ is arbitrarily close to being on a geodesic from $x$ to another point''; in other terms, for all $\epsilon \ge 0$ there is a $y$ such that $\dd(x,p_f)+\dd(p_f,y)\le d(x,y)+\epsilon$.  This is expressing the minimality of the tight-span $T(\X)$, saying that the points of $T(\X)$ must `lie between' the points of $\X$.

\subsection{Examples}
Here are two examples which compare the two approaches to the tight span.

The two-point metric space $A_{r}$ is pictured in  Figure~\ref{Fig:ArTightSpan}.  We define $\Aim(A_{r})\in \R^{\{a,b\}}$ by $f\in \Aim(A_{r})$ if $f(a)+f(b)\ge r$, $f(a)\ge 0$, and $f(b)\ge 0$.  The minimal functions are those satisfying $f(a)+f(b)=r$.  This defines the injective envelope $\EE(A_r)$ and is pictured in  Figure~\ref{Fig:ArTightSpan}.
 On the other hand the presheaf space $\presheaf {A_r}\subset [0,\infty]^{\{a,b\}}$ is defined by $f\in \presheaf {A_r}$ if $\left| f(a)-f(b)\right| \le r$.  This is also pictured in Figure~\ref{Fig:ArTightSpan} with the tight span also drawn.
 
The three-point classical metric space $A_{r,s,t}$ is also pictured in  Figure~\ref{Fig:ArTightSpan}.  Note that the injective envelope $\EE(A_{r,s,t})$ and the tight span $T(A_{r,s,t})$ are defined as subsets of different spaces but are isometric.

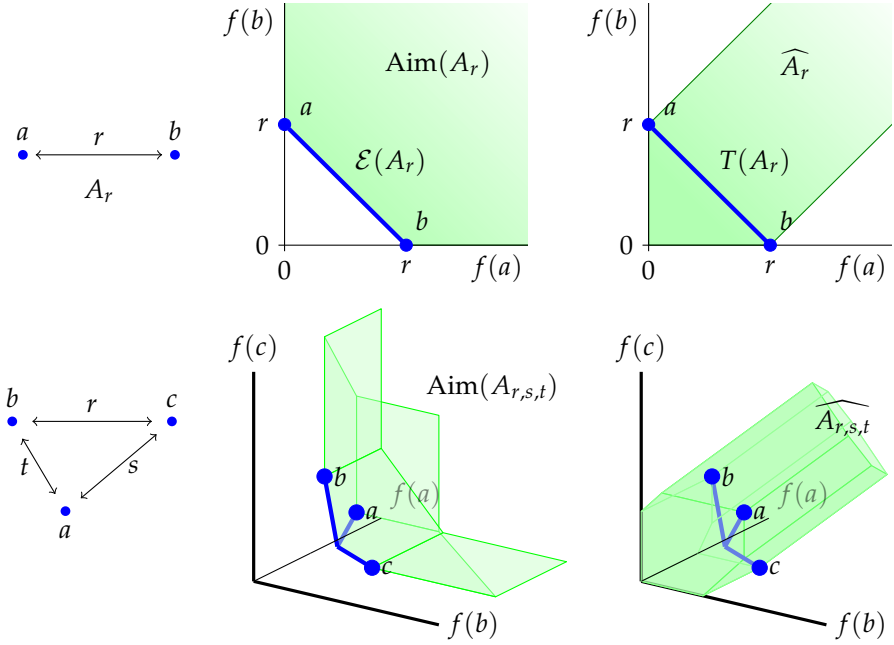
\begin{figure}[htb]
\centering
\begin{tikzpicture}[baseline=-2cm]
\node [circle,draw=blue,fill=blue,thick, inner sep=0pt,minimum size=1mm ,label=above:$b$](b) at (2,0) {};
\node [circle,draw=blue,fill=blue,thick, inner sep=0pt,minimum size=1mm ,label=above:$a$](a) at (0,0) {};
\draw[<->,shorten >=3pt,shorten <=3pt] (a) -- node[above]{$r$} (b);
\node at (1,-0.2) [anchor=north] {$A_r$};
\end{tikzpicture}
\quad
\begin{tikzpicture}[scale=0.8]
\clip (-1,-1) rectangle (4,4);
\shadedraw[left color=green!30,right color=white, draw=green!50!black, shading angle = 135]
(2,0) -- (5,0) -- (5,5) -- (0,5) -- (0,2) -- cycle;

\draw  (-0.1,0) -- (5,0);
\draw  (0,-0.1) -- (0,5);
\node at (3.5,0) [anchor=north] {$f(a)$};
\node at (0,3.7) [anchor=east] {$f(b)$};
\draw (2,0) -- (0,2);
\node [minimum size=0.1cm,inner sep = 0cm] (a) at ( 0,2){};
\draw [draw=blue,fill=blue] (a) circle (0.1cm);
\node [inner sep = 0cm] (b) at ( 2,0){};
\draw [draw=blue,fill=blue] (b) circle (0.1cm);
\draw [color=blue,ultra thick] (a) -- (b);
\node at (0,-0.1) [anchor = north] {$0$};
\node at (-0.1,0) [anchor = east] {$0$};
\node at (2,-0.1) [anchor = north] {$r$};
\node at (-0.1,2) [anchor = east] {$r$};
\node at (1,1) [anchor = south west] {$\EE(A_r)$};
\node at (1.5,3) [anchor = west] {$\Aim(A_r)$};
\node at (0.1,2) [anchor = south west] {$a$};
\node at (2,0.1) [anchor = south west] {$b$};

\end{tikzpicture}
\qquad
\begin{tikzpicture}[scale=0.8]
\clip (-1,-1) rectangle (4,4);
\shadedraw[left color=green!30,right color=white, draw=green!50!black, shading angle = 135]
(0,0) -- (2,0) -- (5,3) -- (5,5) -- (3,5) -- (0,2) -- cycle;

\draw (-0.1,0) -- (5,0);
\draw (0,-0.1) -- (0,5);
\node at (3.5,0) [anchor=north] {$f(a)$};
\node at (0,3.7) [anchor=east] {$f(b)$};

\node [minimum size=0.1cm,inner sep = 0cm] (a) at ( 0,2){};
\draw [draw=blue,fill=blue] (a) circle (0.1cm);
\node [inner sep = 0cm] (b) at ( 2,0){};
\draw [draw=blue,fill=blue] (b) circle (0.1cm);
\draw [color=blue,ultra thick] (a) -- (b);
\node at (0,-0.1) [anchor = north] {$0$};
\node at (-0.1,0) [anchor = east] {$0$};
\node at (2,-0.1) [anchor = north] {$r$};
\node at (-0.1,2) [anchor = east] {$r$};
\node at (1,1) [anchor = south west] {$T(A_r)$};
\node at (2,3) [anchor = west] {$\presheaf{A_r}$};
\node at (0.1,2) [anchor = south west] {$a$};
\node at (2,0.1) [anchor = south west] {$b$};
\end{tikzpicture}
\\
  \begin{tikzpicture}[scale=0.7,baseline=-3cm)]
    \node [circle,draw=blue,fill=blue,thick, inner sep=0pt,minimum size=1mm,label=above:$b$](x) at (0,0) {};
  \node [circle,draw=blue,fill=blue,thick, inner sep=0pt,minimum size=1mm,label=above:$c$](y) at (3,0) {};
  \node [circle,draw=blue,fill=blue,thick, inner sep=0pt,minimum size=1mm ,label=below:$a$](z) at (1,-1.7) {};
 \begin{scope}[<->,shorten >=2mm,shorten <=2mm]
  \path (x) edge node[above] {$r$} (y);
  \path (z) edge node[pos=0.5,left] {$t$} (x);
  \draw (y) edge node[pos=0.5,right] {$s$} (z);
  \end{scope}
  \end{tikzpicture}
  \quad
\begin{tikzpicture}[line join=round]
\filldraw[draw=green,fill=green!20,fill opacity=0.5](1.353,.914)--(1.353,2.457)--(2.436,2.201)--(2.436,.658)--cycle;
\filldraw[draw=green,fill=green!20,fill opacity=0.5](1.558,.18)--(1.101,.46)--(1.353,.914)--(2.436,.658)--(3.182,-.205)--cycle;
\filldraw[draw=green,fill=green!20,fill opacity=0.5](1.353,.914)--(1.101,.46)--(.932,1.391)--(.932,3.243)--(1.353,2.457)--cycle;
\draw[style =very thick](0,0)--(2.436,-.577);
\draw[draw=blue,style =ultra thick](1.353,.914)--(1.101,.46);
\draw[style =very thick](0,0)--(0,2.778);
\filldraw[draw=green,fill=green!20,fill opacity=0.5](1.558,.18)--(2.49,.645)--(4.114,.26)--(3.182,-.205)--cycle;
\draw(0,0)--(.746,.372);
\filldraw[draw=green,fill=green!20,fill opacity=0.5](.932,1.391)--(.932,3.243)--(1.678,3.615)--(1.678,1.763)--cycle;
\filldraw[draw=green,fill=green!20,fill opacity=0.5](.932,1.391)--(1.101,.46)--(1.558,.18)--(2.49,.645)--(1.678,1.763)--cycle;
\draw[draw=blue,style =ultra thick](.932,1.391)--(1.101,.46);
\draw[draw=blue,style =ultra thick](1.558,.18)--(1.101,.46);
\draw(.746,.372)--(1.678,.837);

        \node at (1.678,.837) [anchor=south west,opacity=0.5] {$f(a)$};
        \node at (2.436,-.577) [anchor=west] {$f(b)$};
        \node at (0,2.778) [anchor=south] {$f(c)$};
        \node at (2.149,2.578) [anchor=west]{$\Aim(A_{r,s,t})$};
        
        \draw [draw=blue,fill=blue] (1.353,.914) circle (.1cm); 
        \node at (1.353,.914) [anchor=west] {$a$};
        \draw [draw=blue,fill=blue] (.932,1.391) circle (.1cm); 
        \node at (.932,1.391) [anchor=west] {$b$};
        \draw [draw=blue,fill=blue] (1.558,.18) circle (.1cm); 
        \node at (1.558,.18) [anchor=west] {$c$};
        \end{tikzpicture}
{}
\quad
\begin{tikzpicture}[line join=round]
\draw[style =very thick](0,0)--(2.436,-.577);
\filldraw[draw=green!40,fill=green!20,fill opacity=0.5](3.234,1.418)--(2.761,2.124)--(2.456,2.516)--(2.252,2.633)--(2.725,1.927)--(3.029,1.535)--cycle;
\filldraw[draw=green,fill=green!60,fill opacity=0.5](1.353,.914)--(2.725,1.927)--(2.252,2.633)--(.271,1.171)--cycle;
\filldraw[draw=green,fill=green!60,fill opacity=0.5](1.353,.297)--(3.029,1.535)--(2.725,1.927)--(1.353,.914)--cycle;
\filldraw[draw=green,fill=green!60,fill opacity=0.5](0,0)--(0,.926)--(.271,1.171)--(1.353,.914)--(1.353,.297)--(.812,-.192)--cycle;
\draw[draw=blue,style =ultra thick](1.353,.914)--(1.101,.46);
\filldraw[draw=green,fill=green!60,fill opacity=0.5](.812,-.192)--(1.558,.18)--(3.234,1.418)--(3.029,1.535)--(1.353,.297)--cycle;
\draw[style =very thick](0,0)--(0,2.778);
\filldraw[draw=green!40,fill=green!20,fill opacity=0.5](0,.926)--(.271,1.171)--(2.252,2.633)--(2.456,2.516)--(.932,1.391)--cycle;
\filldraw[draw=green!40,fill=green!20,fill opacity=0.5](0,0)--(.746,.372)--(1.558,.18)--(.812,-.192)--cycle;
\filldraw[draw=green!40,fill=green!20,fill opacity=0.5](0,0)--(.746,.372)--(.932,.774)--(.932,1.391)--(0,.926)--cycle;
\draw(0,0)--(.746,.372);
\draw[draw=blue,style =ultra thick](.932,1.391)--(1.101,.46);
\draw[draw=blue,style =ultra thick](1.558,.18)--(1.101,.46);
\filldraw[draw=green!40,fill=green!20,fill opacity=0.5](3.234,1.418)--(2.761,2.124)--(.932,.774)--(.746,.372)--(1.558,.18)--cycle;
\filldraw[draw=green!40,fill=green!20,fill opacity=0.5](2.456,2.516)--(2.761,2.124)--(.932,.774)--(.932,1.391)--cycle;
\draw(.746,.372)--(1.678,.837);

        \node at (1.678,.837) [anchor=south west,opacity=0.5] {$f(a)$};
        \node at (2.436,-.577) [anchor=west] {$f(b)$};
        \node at (0,2.778) [anchor=south] {$f(c)$};
        \node at (2.149,2.161) [anchor=west]{$\presheaf{A_{r,s,t}}$};
        
\draw [draw=blue,fill=blue] (1.353,.914) circle (.1cm); 
\node at (1.353,.914) [anchor=west] {$a$};
\draw [draw=blue,fill=blue] (.932,1.391) circle (.1cm); 
\node at (.932,1.391) [anchor=west] {$b$};
\draw [draw=blue,fill=blue] (1.558,.18) circle (.1cm); 
\node at (1.558,.18) [anchor=west] {$c$};
\end{tikzpicture}
{}
\caption{The two approaches to defining the tight span of a classical metric space.
}
\label{Fig:ArTightSpan}
\end{figure}

\section{Isbell completion}
In this section we define the Isbell completion of a generalized metric space as the invariant part of the Isbell adjunction.  We then give some basic examples.

\subsection{Definition of Isbell completion}
\label{Section:DefnIsbell}
%
Before defining the Isbell completion, we should first define the Isbell adjunction (or Isbell conjugation) which is described for ordinary categories by  Lawvere in~\cite{Lawvere:TakingCategoriesSeriously}.  For a generalized metric space $X$, the \definition{Isbell adjunction} is the following pair of maps between the spaces of presheaves and op-co-presheaves.
\[
\begin{tikzpicture}[>=to,->,
] 
    \matrix[matrix of math nodes,column sep={1.6cm,between origins}]
     { |(hatX)| \presheaf X & |(checkX)| \copre X^\op\\};
\draw[transform canvas={yshift=0.4ex}] (hatX) edge node [above,midway] {$L$} (checkX) ;
 \draw[transform canvas={yshift=-0.4ex}] (checkX)  edge node [below,midway] {$R$}  (hatX);
      \end{tikzpicture}\]
defined in the following way for $f\in \presheaf{X}$ and $g\in \copre{X}^\op$:
\[
L(f)(y):=\sup_{x\in X}\left(d(x,y) \mminus f(x)\right);
\qquad
R(g)(x):=\sup_{y\in X}\left(d(x,y) \mminus g(y)\right).
\]
The basic properties are summarized in the following theorem (see~\cite{Lawvere:TakingCategoriesSeriously} and~\cite{Trimble:mathoverflow}).
\begin{thm}
\label{Thm:IsbellAdjunction}
For a generalized metric space $X$, both $L$ and $R$ are short maps and commute with the Yoneda maps, so in the following diagram the two triangles commute.
\[\begin{tikzpicture}[>=to,->,
] 
    \matrix[matrix of math nodes,column sep={0.8cm,between origins}, row sep={1cm}]
     {                     &|(X)| X\\
        |(hatX)| \presheaf X && |(checkX)| \copre X^\op\\};
\draw (X) to (hatX);
\draw (X) to (checkX);
\draw[transform canvas={yshift=0.4ex}] (hatX) edge node [above,midway] {$L$} (checkX) ;
 \draw[transform canvas={yshift=-0.4ex}] (checkX)  edge node [below,midway] {$R$}  (hatX);
     \end{tikzpicture}
\]
Furthermore, $L$ and $R$ form an adjunction, so that
  \[ \dd_{\copre X^\op}(L(f),g)=\dd_{\presheaf X}(f,R(g)),\]
which means that, as in Section~\ref{Section:MetricSpaces}, the composites $RL\colon \presheaf X \to \presheaf X$ and $LR\colon \copre X^\op \to \copre X^\op$ are both idempotent: 
  \[RLRL=RL\quad\text{and}\quad LRLR=LR.\]
\end{thm}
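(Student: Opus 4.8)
The plan is to read all three assertions directly off the defining formulas for $L$ and $R$, reducing everything to the triangle inequality for $\dd_X$ and a single arithmetic fact about truncated difference: for $a,c\in[0,\infty]$ and $b\in[0,\infty]$ one has $(c\mminus a)\mminus b = c\mminus(a+b)$, where the hypothesis $b\ge 0$ is exactly the content of the conditions $f(x)\ge 0$, $g(y)\ge 0$ baked into the definitions of presheaf and op-co-presheaf. I will also use freely that adding a nonnegative constant and applying $\max(-,0)$ each commute with suprema, and that a double supremum may be evaluated in either order. The idempotency $RLRL=RL$, $LRLR=LR$ then needs no separate argument: it is the general fact recorded in Section~\ref{Section:MetricSpaces} that the round-trip composites of an adjunction of generalized metric spaces are idempotent up to isomorphism, combined with the skeletality of $\presheaf X$ and $\copre X^\op$ (also noted there), which turns $\isomorphic$ into $=$.

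First I would check that $L$ and $R$ take values in the asserted spaces and are short maps. For $R(g)$, the triangle inequality $\dd_X(x,y)\le\dd_X(x,x')+\dd_X(x',y)$ gives $\dd_X(x,y)\mminus g(y)\le\dd_X(x,x')+\bigl(\dd_X(x',y)\mminus g(y)\bigr)\le\dd_X(x,x')+R(g)(x')$; taking the supremum over $y$ yields $R(g)(x)\mminus R(g)(x')\le\dd_X(x,x')$, which is the presheaf condition. Shortness of $R$ is the same kind of estimate: if $\dd_{\copre X^\op}(g,g')=c$ then $g'(y)\ge g(y)-c$ for all $y$, so $\dd_X(x,y)\mminus g'(y)\le\bigl(\dd_X(x,y)\mminus g(y)\bigr)+c$, and taking suprema gives $R(g')(x)\mminus R(g)(x)\le c$ for every $x$, hence $\dd_{\presheaf X}(R(g),R(g'))\le c$. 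The statements for $L$ are the mirror image, with the two arguments of $\dd_X$ interchanged.

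For commutation with the Yoneda maps, expand $L(\Yoneda x)(y)=\sup_{z\in X}\bigl(\dd_X(z,y)\mminus\dd_X(z,x)\bigr)$: the triangle inequality bounds this above by $\dd_X(x,y)$, and the choice $z=x$ attains the bound, so $L(\Yoneda x)(y)=\dd_X(x,y)$, which is precisely the value at $y$ of the co-Yoneda op-co-presheaf of $x$; the identity $R\circ\Yoneda=\Yoneda$ is the same computation with the arguments reversed. The adjunction identity is the substantive step. Writing both sides out, $\dd_{\copre X^\op}(L(f),g)=\sup_{y}\bigl(L(f)(y)\mminus g(y)\bigr)$ and $\dd_{\presheaf X}(f,R(g))=\sup_{x}\bigl(R(g)(x)\mminus f(x)\bigr)$. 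Pulling $\mminus g(y)$ (respectively $\mminus f(x)$) through the supremum defining $L(f)(y)$ (respectively $R(g)(x)$) and then applying $(c\mminus a)\mminus b=c\mminus(a+b)$, the left-hand side becomes $\sup_{y}\sup_{x}\bigl(\dd_X(x,y)\mminus(f(x)+g(y))\bigr)$ and the right-hand side becomes $\sup_{x}\sup_{y}\bigl(\dd_X(x,y)\mminus(f(x)+g(y))\bigr)$; these coincide because a double supremum does not depend on the order of evaluation, both being equal to the manifestly symmetric quantity $\sup_{x,y\in X}\bigl(\dd_X(x,y)\mminus(f(x)+g(y))\bigr)$ that the adjunction brings to light.

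I expect the only genuine obstacle to be bookkeeping in this last computation: one must track the truncation in $\mminus$ carefully — this is precisely where nonnegativity of $f$ and $g$ is used — and be slightly careful when interchanging suprema with $\max(-,0)$ in the presence of the value $\infty$. Both issues are ultimately harmless, the first by the arithmetic identity above and the second by monotonicity, but they are the points at which a hasty argument would slip.
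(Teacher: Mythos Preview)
Your proposal is correct and follows essentially the same route as the paper's own proof: the paper declares the well-definedness, shortness, and Yoneda-compatibility ``straight forward'' and then performs exactly your computation for the adjunction identity, expanding both sides and collapsing them to the symmetric double supremum $\sup_{x,y}\bigl(\dd_X(x,y)\mminus(f(x)+g(y))\bigr)$ via the same arithmetic step $(c\mminus a)\mminus b=c\mminus(a+b)$. Your write-up is in fact more thorough than the paper's, since you spell out the parts the paper omits and flag where nonnegativity of $f,g$ is used.
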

\begin{proof}
Most of the theorem is straight forward.  We will just show that $L$ and $R$ form an adjunction.  For $f\in \presheaf X$ and $g\in \copre X^\op$ we have
\begin{align*}
  \dd_{\copre X^\op}(L(f),g)
   &=\dd_{\copre X}(g,L(f))
     =\sup_{y\in X}\bigl\{L(f)(y)\mminus g(y)\bigr\}\\
     &=\sup_{y\in X}\biggl\{\sup_{x\in X}\bigl\{\dd_X(x,y)\mminus f(x) \bigr\} \mminus g(y)\biggr\}\\
     &=\sup_{x,y\in X}\Bigl\{\dd_X(x,y)\mminus \bigl(f(x) + g(y)\bigr)\Bigr\}\\
     &=\sup_{x\in X}\biggl\{ \sup_{y\in X}\bigl\{\dd_X(x,y)\mminus g(y) \bigr\} \mminus f(x)\biggr\}\\
     &=\sup_{x\in X}\bigl\{R(g)(x)\mminus f(x)\bigr\}\\
     &=\dd_{\presheaf X}\bigl(f,R(g)\bigr),
\end{align*}
as required.
\end{proof}

We can now define the \definition{Isbell completion} $I(X)$ of a generalized metric space $X$ to be the `invariant part' of the Isbell adjunction:
\begin{align*}
  I(X)&:=\left\{(f,g)\in \presheaf X \times \copre X^\op \mid L(f) =g\text{ and } R(g)=f\right\}.
 \end{align*}
There are a couple of equivalent ways to describe this, but a few comments are in order here.  Firstly, we should make a note of the generalized metric on the Isbell completion $I(X)$.  The metric is inherited from that on $\presheaf X \times \copre X^\op$ which itself is a product metric, which from the category theory point of view means that we should use the maximum --- the categorical product in  $\Rplus$ --- of the metrics for the two factors. 
  \[\dd_{I(X)}\left( (f,g),(f',g')\right):= \max\left\{ \dd_{\presheaf X}(f,f'),\dd_{\copre X^\op}(g,g')\right\}.\]
However, things simplify somewhat as the following lemma shows.
\begin{lemma}
\label{Lemma:EasyMetric}
Suppose that $X$ is a generalized metric space.
For two points in the Isbell completion, $(f,g),(f',g')\in I(X)$, the generalized distance between them can be expressed in the two simple ways:
  \[\dd_{I(X)}\left( (f,g),(f',g')\right)= \dd_{\presheaf X}(f,f')=\dd_{\copre X^\op}(g,g').\]
\end{lemma}
\begin{proof}
Given the definition of the metric on $I(X)$ above, it suffices to show that $\dd(f,f')=\dd(g,g')$.  This is straightforward.
 \[\dd_{\presheaf X}(f,f')=\dd_{\presheaf X}(R(g),R(g'))=\dd_{\copre X^\op}(LR(g),g')=\dd_{\copre X^\op}(g,g').\]
 The middle equality uses the fact that $L$ and $R$ form an adjunction, and the other equalities use the relations $f=R(g)$, $f'=R(g')$ and $g=L(f)$.
\end{proof}
We want to think of $I(X)$ as an `extension' of $X$.  For a point of the Isbell completion, $P=(f,g)\in I(X)$, we want to think of $f$ as giving the distance of $P$ from each point of $X$ and $g$ as giving the distance of $P$ to each point of $X$.  This can be encapsulated as following theorem which is straightforward to prove.  

\begin{thm}
For $X$ a generalized metric space, the Yoneda maps give an isometry
  \[\ \Yoneda\colon X\to I(X);\qquad x\mapsto \left(\dd_X({-},x),\dd_X(x,{-})\right)\]
Then for $P=(f,g)\in I(X)$ we have
 \[
   \dd_{I(X)}\bigl(\Yoneda(x),P\bigr)= f(x);
   \qquad
   \dd_{I(X)}\left(P,\Yoneda(x)\right)= g(x).
\]
\end{thm}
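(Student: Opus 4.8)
The plan is to check, in order, that the displayed map actually lands in $I(X)$, that it satisfies the two distance formulas, and that it is an isometry — the last being immediate from the second.

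First I would verify well-definedness. Fix $a\in X$. From Section~\ref{Section:MetricSpaces} one already knows that $\dd_X({-},a)$ is a presheaf and $\dd_X(a,{-})$ an op-co-presheaf (a triangle-inequality check in each case), so what remains is the pair of fixed-point equations $L(\dd_X({-},a))=\dd_X(a,{-})$ and $R(\dd_X(a,{-}))=\dd_X({-},a)$. Unwinding the definition, $L(\dd_X({-},a))(y)=\sup_{z\in X}\bigl(\dd_X(z,y)\mminus\dd_X(z,a)\bigr)$; the summand $z=a$ equals $\dd_X(a,y)\mminus 0=\dd_X(a,y)$, giving ``$\ge$'', while the triangle inequality $\dd_X(z,y)\le\dd_X(z,a)+\dd_X(a,y)$ (together with a one-line treatment of the cases involving $\infty$) bounds every summand above by $\dd_X(a,y)$, giving ``$\le$''. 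The computation for $R$ is word-for-word the same with the two arguments of $\dd_X$ interchanged. Hence $\Yoneda(a)=(\dd_X({-},a),\dd_X(a,{-}))\in I(X)$.

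Next I would prove the distance formulas. Let $P=(f,g)\in I(X)$. The main tool is Lemma~\ref{Lemma:EasyMetric}, which lets $\dd_{I(X)}$ be computed through whichever of the two coordinates is convenient. For the distance \emph{to} $\Yoneda(x)$ I use the presheaf coordinate:
\[\dd_{I(X)}\bigl(P,\Yoneda(x)\bigr)=\dd_{\presheaf X}\bigl(f,\dd_X({-},x)\bigr)=\sup_{z\in X}\bigl(\dd_X(z,x)\mminus f(z)\bigr),\]
and the right-hand side is, by definition, $L(f)(x)$, which equals $g(x)$ because $(f,g)\in I(X)$. For the distance \emph{from} $\Yoneda(x)$ I use the op-co-presheaf coordinate:
\[\dd_{I(X)}\bigl(\Yoneda(x),P\bigr)=\dd_{\copre X^\op}\bigl(\dd_X(x,{-}),g\bigr)=\dd_{\copre X}\bigl(g,\dd_X(x,{-})\bigr)=\sup_{z\in X}\bigl(\dd_X(x,z)\mminus g(z)\bigr),\]
which is exactly $R(g)(x)=f(x)$. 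So both identities fall straight out of the definitions of $L$ and $R$ once the correct coordinate is picked.

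The last step is the isometry claim: applying the formula $\dd_{I(X)}(\Yoneda(x),P)=f(x)$ to the point $P=\Yoneda(x')$ — legitimate by the first step — gives $\dd_{I(X)}(\Yoneda(x),\Yoneda(x'))=\dd_X(x,x')$, so $\Yoneda\colon X\to I(X)$ is an isometry. I do not expect a genuine obstacle here; the theorem is essentially routine. The only things requiring care are the truncated-difference and $\infty$ bookkeeping in the well-definedness step, and the small observation that one of the two distances should be computed via the presheaf coordinate and the other via the op-co-presheaf coordinate, so that each collapses directly to a defining equation of $I(X)$ rather than needing a further triangle-inequality estimate.
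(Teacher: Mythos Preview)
The paper does not actually give a proof of this theorem; it simply remarks that it ``is straightforward to prove.'' Your argument is correct and is exactly the expected one: well-definedness of $\Yoneda$ into $I(X)$ is the content of the commuting triangles already recorded in Theorem~\ref{Thm:IsbellAdjunction}, and your derivation of the two distance formulas via Lemma~\ref{Lemma:EasyMetric} (choosing the presheaf coordinate for one direction and the op-co-presheaf coordinate for the other so that each reduces to $L(f)(x)=g(x)$, respectively $R(g)(x)=f(x)$) is clean and complete, with the isometry then immediate.
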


We can try to understand in what sense this `extension' is minimal, in other words, try to interpret the condition on the points of the Isbell completion.  One can think of the following as saying that for every $z\in X$, every point in the Isbell completion, $P\in I(X)$, is arbitrarily close to being on a geodesic between $z$ and another point of $X$.
\begin{thm}
For every $z\in X$ and every $P\in I(X)$, if $\epsilon >0$ then there exist $x,y\in X$ such that 
 \[
 \dd(x,P)+\dd(P,z)\le d(x,z)+\epsilon
 \quad\text{and}\quad
 \dd(z,P)+\dd(P,y)\le d(z,y)+\epsilon,
 \]
i.e.~in each picture below the two paths differ by at most $\epsilon$. 
\begin{center}
\begin{tikzpicture}[
                                decoration={markings,mark=at position 0.5 with {\arrow{stealth}}}]
  \node [circle,draw=blue,fill=blue,thick, inner sep=0pt,minimum size=1mm ,label=below:$x$](x) at (0,0) {};
  \node [circle,draw=blue,fill=blue,thick, inner sep=0pt,minimum size=1mm ,label=below:$z$](z) at (2,0) {};
  \node [circle,draw=blue,fill=blue,thick, inner sep=0pt,minimum size=1mm ,label=above:$P$](P) at (1.0,0.3) {};
  \draw [postaction={decorate}](x) to (z);
  \draw [postaction={decorate}](x) to (P);
  \draw [postaction={decorate}](P) to (z);
\end{tikzpicture}
\qquad\qquad
\begin{tikzpicture}[
                                decoration={markings,mark=at position 0.5 with {\arrow{stealth}}}]
 \node [circle,draw=blue,fill=blue,thick, inner sep=0pt,minimum size=1mm ,label=below:$z$](z) at (0,0) {};
  \node [circle,draw=blue,fill=blue,thick, inner sep=0pt,minimum size=1mm ,label=below:$y$](y) at (2,0) {};
  \node [circle,draw=blue,fill=blue,thick, inner sep=0pt,minimum size=1mm ,label=above:$P$](P) at (1,0.3) {};
  \draw [postaction={decorate}](z) to (y);
  \draw [postaction={decorate}](z) to (P);
  \draw [postaction={decorate}](P) to (y);
\end{tikzpicture}
\end{center}
\end{thm}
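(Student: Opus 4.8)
The plan is to push everything through the isometry $X\hookrightarrow I(X)$ and then read off the two inequalities directly from the two equations that define a point of the Isbell completion. Write $P=(f,g)\in I(X)$, so that $g=L(f)$ and $f=R(g)$. By the preceding theorem the Yoneda map is an isometry and $\dd_{I(X)}(\Yoneda(x),P)=f(x)$, $\dd_{I(X)}(P,\Yoneda(x))=g(x)$ for all $x$; writing $\dd(x,P)=f(x)$, $\dd(P,x)=g(x)$ and $\dd(x,z)=\dd_X(x,z)$, the two desired estimates become: find $x$ with $f(x)+g(z)\le \dd_X(x,z)+\epsilon$, and find $y$ with $f(z)+g(y)\le \dd_X(z,y)+\epsilon$.

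For the first, I would evaluate $g=L(f)$ at $z$, i.e.\ use
\[
g(z)=L(f)(z)=\sup_{x\in X}\bigl(\dd_X(x,z)\mminus f(x)\bigr).
\]
Given $\epsilon>0$, pick $x$ with $\dd_X(x,z)\mminus f(x)\ge g(z)-\epsilon$. Since $u+(v\mminus u)=\max(u,v)$ in $[0,\infty]$, adding $f(x)$ yields $f(x)+g(z)\le\max\bigl(f(x),\dd_X(x,z)\bigr)+\epsilon$, and if $\dd_X(x,z)\ge f(x)$ this is exactly what is wanted. The second estimate is produced identically from $f=R(g)$ at $z$, namely $f(z)=\sup_{y\in X}\bigl(\dd_X(z,y)\mminus g(y)\bigr)$, giving $y$ with $f(z)+g(y)\le\max\bigl(g(y),\dd_X(z,y)\bigr)+\epsilon$.

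The work is in removing the $\max$, i.e.\ in coping with the truncated difference. The only way the chosen $x$ can fail is $f(x)>\dd_X(x,z)$, which then forces $\dd_X(x,z)\mminus f(x)=0$ and hence $g(z)\le\epsilon$; chasing this shows the genuinely problematic situation is $g(z)=0$, in which case the first inequality is equivalent to $\inf_{x\in X}\bigl(f(x)-\dd_X(x,z)\bigr)=0$. But $g(z)=0$ is exactly the statement that $f$ dominates the representable presheaf $\dd_X({-},z)=\Yoneda(z)$ pointwise, so what must be shown is that $f$ cannot be uniformly bounded away from the presheaf it dominates --- a minimality property of $f$. Here I would bring in that $f=RL(f)$ is a fixed point of the idempotent $RL$ (so $f$ is, in a precise sense, the smallest presheaf inducing $P$): the plan is to show that if $f\ge \dd_X({-},z)+m$ pointwise for some $m>0$ then the lowered presheaf $f\mminus m$ is still $RL$-fixed and still sits at distance $0$ from $\Yoneda(z)$, and to extract from the resulting configuration over $\Yoneda(z)$ that $m=0$. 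The mirror argument --- using $f=R(g)$, the fixed point $g=LR(g)$, and the case $f(z)=0$ --- settles the degenerate case of the second inequality.

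I expect this degenerate, minimality step to be the main obstacle (and the one whose handling most closely parallels the proof of the Hirai--Koichi comparison mentioned earlier); everything else is routine manipulation of suprema together with bookkeeping for the truncated difference $\mminus$.
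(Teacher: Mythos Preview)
Your approach matches the paper's exactly: unpack $g(z)=L(f)(z)$ for the first inequality and $f(z)=R(g)(z)$ for the second. You are right that the truncated difference creates a degenerate case at $g(z)=0$ (respectively $f(z)=0$), and you are also right that the paper's proof simply writes ordinary subtraction and thereby glosses over this case.

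However, your proposed minimality fix for the degenerate case cannot succeed, because the statement is actually \emph{false} there. Take $X=\{a,b,c\}$ with the symmetric metric $\dd(a,b)=\dd(a,c)=1$, $\dd(b,c)=2$. The pair
\[
f=(\tfrac12,\tfrac32,\tfrac32),\qquad g=(0,\tfrac12,\tfrac12)
\]
(values at $a,b,c$ respectively) lies in $I(X)$: one checks directly that $L(f)=g$ and $R(g)=f$. For $z=a$ we have $g(a)=0$, yet $f(x)-\dd(x,a)=\tfrac12$ for every $x\in\{a,b,c\}$, so no $x$ satisfies $\dd(x,P)+\dd(P,z)\le \dd(x,z)+\epsilon$ once $\epsilon<\tfrac12$. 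In terms of your plan, the lowered presheaf $f-\tfrac12=(0,1,1)$ is indeed $RL$-fixed --- it is precisely $\Yoneda(a)$ --- but nothing forces $m=0$: $P$ and $\Yoneda(a)$ are simply two distinct points of $I(X)$, both at distance $0$ \emph{to} $a$.

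So your instinct that the truncation issue is the real obstacle was exactly right; it is just that the obstacle is fatal to the statement rather than merely to the proof. The argument that you and the paper give is complete and correct under the extra hypothesis $g(z)>0$ (and dually $f(z)>0$ for the second inequality).
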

\begin{proof}
 This is just an unpacking of the definition of a point in the Isbell completion.  So if $P=(f,g)\in I(X)$ then $L(f)=g$ and $f=R(g)$.  Considering the first condition, we have for $z\in X$ that $L(f)(z)=g(z)$ which means
 \begin{align*}
 \sup_{x\in X}\bigl\{\dd(x,z)-f(z)\bigr\}&=g(z)\\
\intertext{so}
 \sup_{x\in X}\bigl\{\dd(x,z)-\dd(x,P)\bigr\}&=\dd(P,z).
\end{align*}
Thus for all $\epsilon>0$ there exists $x\in X$ such that
\begin{align*}
 \dd(x,z)-\dd(x,P)&\ge\dd(P,z)-\epsilon\\
\intertext{whence}
  \dd(x,P)+\dd(P,z)&\le d(x,z)+\epsilon. 
 \end{align*}
The other inequality is obtained by similar considerations with $f=R(g)$.
\end{proof}
There are a couple of other ways of describing the Isbell completion which are less helpful intuitively, but are more useful in doing calculations.  We define the following generalized metric spaces:
\begin{align*}
  \Fix_{RL}(X)&:=\left\{f\in \presheaf X \mid RL(f)=f\right\}\subset \presheaf X;\\
  \Fix_{LR}(X)&:=\left\{g\in \copre X^\op \mid LR(g)=g\right\} \subset \copre X^\op.
\end{align*}
There are obvious inclusion and projection maps
\[
\begin{tikzpicture}[>=to,->,
] 
    \matrix[matrix of math nodes,column sep={1.6cm,between origins}]
     { |(FixRL)| \Fix_{RL}(X) & |(IX)| I(X)&|(FixLR)| \Fix_{LR}(X)  \\};
\draw[transform canvas={yshift=0.4ex}] (FixRL) edge node [above,midway] {} (IX) ;
\draw[transform canvas={yshift=-0.4ex}] (IX)  edge node [below,midway] {}  (FixRL);
\draw[transform canvas={yshift=0.4ex}] (FixLR) edge node [above,midway] {} (IX) ;
\draw[transform canvas={yshift=-0.4ex}] (IX)  edge node [below,midway] {}  (FixLR);
      \end{tikzpicture}.\]
%
For instance $f\in \Fix_{RL}(X)$ maps to $(f,L(f))\in I(X)$.  These maps are immediately seen to be bijections, and they are isometries by Lemma~\ref{Lemma:EasyMetric}.  So the three spaces are isometric.
  \[ \Fix_{RL}(X)\cong I(X) \cong\Fix_{LR}(X).\]
We now move on to some examples.

\subsection{Examples of Isbell completions}
We will just look at some very basic examples here, but they will demonstrate some pertinent features.
\subsubsection{Classical metric space with two points.}
\begin{figure}[tb]
\begin{center}
\begin{tikzpicture}[baseline=-2cm]
\node [circle,draw=blue,fill=blue,thick, inner sep=0pt,minimum size=1mm ,label=above:$b$](b) at (2,0) {};
\node [circle,draw=blue,fill=blue,thick, inner sep=0pt,minimum size=1mm ,label=above:$a$](a) at (0,0) {};
\draw[<->,shorten >=2pt,shorten <=2pt] (a) -- node[above]{$r$} (b);
\end{tikzpicture}
\qquad
\begin{tikzpicture}
\draw[thin,draw=black,->] (-0.1,0)  -- (2.5,0) node[below] {$f(a)$};
\draw[thin,draw=black,->] (0,-0.1) -- (0,2.5) node[left] {$f(b)$};
\draw[fill=green!80,fill opacity=0.5] (0,0) rectangle (2,2);
\node [circle,draw=blue,fill=blue,thick, inner sep=0pt,minimum size=1mm ,label=above right:$b$](b) at (2,0) {};
\node [circle,draw=blue,fill=blue,thick, inner sep=0pt,minimum size=1mm ,label=above right:$a$](a) at (0,2) {};
\node [circle,draw=blue,fill=blue,thick, inner sep=0pt,minimum size=1mm ,label=above right:$\toppoint$](T) at (2,2) {};
\node [circle,draw=blue,fill=blue,thick, inner sep=0pt,minimum size=1mm ,label=above right:$\bottompoint$](B) at (0,0) {};
\draw[draw=blue, ultra thick] (a) -- (b);
\end{tikzpicture}
\begin{tikzpicture}
\draw[thin,draw=black,->] (-0.1,0)  -- (2.5,0) node[below] {$g(a)$};
\draw[thin,draw=black,->] (0,-0.1) -- (0,2.5) node[left] {$g(b)$};
\draw[fill=green!80,fill opacity=0.5] (0,0) rectangle (2,2);
\node [circle,draw=blue,fill=blue,thick, inner sep=0pt,minimum size=1mm ,label=above right:$b$](b) at (2,0) {};
\node [circle,draw=blue,fill=blue,thick, inner sep=0pt,minimum size=1mm ,label=above right:$a$](a) at (0,2) {};
\node [circle,draw=blue,fill=blue,thick, inner sep=0pt,minimum size=1mm ,label=above right:$\bottompoint$](B) at (2,2) {};
\node [circle,draw=blue,fill=blue,thick, inner sep=0pt,minimum size=1mm ,label=above right:$\toppoint$](T) at (0,0) {};
\draw[draw=blue,ultra thick] (a) -- (b);
\end{tikzpicture}
\end{center}
\caption{The metric space $A_r$ with its Isbell completion pictured as $\Fix_{RL}(A_r)$ and $\Fix_{LR}(A_r)$, the tight span is also marked in.}
\label{Fig:AdIsbellCompletion}
\end{figure}
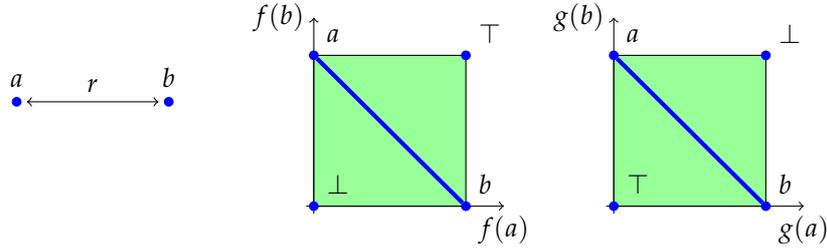
We take $A_r$ to be the symmetric metric space with two points $a$ and $b$ with a distance of $r$ between them, where $r>0$.   We use $\Fix_{RL}(A_r)$ as our definition of the Isbell completion, so that we are interested in
 \begin{multline*}
  \Fix_{RL}(A_r)=\biggl\{ f\colon\{a,b\}\xrightarrow{} [0,\infty]\mid 
       \forall x\in \{a,b\}\\
       f(x)=\max_{y\in\{a,b\}}\Bigl( \dd_{A_r}(x,y) \mminus
       \max_{z\in \{a,b\} }\bigl(\dd_{A_r}(z,y)\mminus f(z)\bigr)\Bigr)
       \biggr\}
\end{multline*}
A not very enlightening calculation allows us to deduce that this set is given by
  \[
    \Fix_{RL}(A_r)=
     \biggl\{ f\colon\{a,b\}\xrightarrow{} [0,\infty]\mid 
    0\le f(a)\le r,\ 0\le f(b)\le r   \biggr\}.
  \]
This is, of course, easily pictured as a subset of $\R^2$, as in Figure~\ref{Fig:AdIsbellCompletion}.  The generalized metric is the asymmetric $\sup$ metric, given by
  \[\dd\bigl((\alpha,\beta),(\alpha',\beta')\bigr)=\max( \alpha'-\alpha,\beta'-\beta,0).\]
There are various features in Figure~\ref{Fig:AdIsbellCompletion} which should be pointed out.  Firstly, of course, we see $A_r$ embedded isometrically in the Isbell completion.  Secondly, the tight span $T(A_r)$ is seen as the diagonal of the square, this should be compared with Figure~\ref{Fig:ArTightSpan}, see Section~\ref{Section:TightSpanInIsbell} for more on this.  Thirdly, the point $\bottompoint$ (or  ``bottom'') in the picture is a distance $0$ \emph{from} every point in the Isbell completion, $\dd_{I(X)}((f,g),\bottompoint)=0$ for all $(f,g)\in I(X)$; whereas the point $\toppoint$ (or ``top'') in the picture is a distance $0$ \emph{to} every point in the Isbell completion.  These can be thought of, in category theoretic language, as a terminal and an initial point: that the Isbell completion has these is a feature of the fact that it has all weighted limits and colimits, as will be seen later.  Also pictured in Figure~\ref{Fig:AdIsbellCompletion} is $\Fix_{LR}(A_r)$, which is a subset of $\R^2$ with the opposite asymmetric metric, this is of course also isometric with the Isbell completion, and is basically the other picture flipped over.
\subsubsection{Example of two asymmetric points}
\label{section:IsbellCompletionAsymmetricTwoPoints}
\begin{figure}[!t]
\begin{center}
\begin{tikzpicture}
\node [circle,draw=blue,fill=blue,thick, inner sep=0pt,minimum size=1mm ,label=above:$b$](b) at (3,0) {};
\node [circle,draw=blue,fill=blue,thick, inner sep=0pt,minimum size=1mm ,label=above:$a$](a) at (0,0) {};
\draw[->,shorten >=2pt,shorten <=2pt,transform canvas={yshift=0.4ex}] (a) -- node[above]{$r$} (b);
\draw[->,shorten >=2pt,shorten <=2pt,transform canvas={yshift=-0.4ex}] (b) -- node[below]{$s$} (a);
\end{tikzpicture}\\
\begin{tikzpicture}
\draw[thin,draw=black,->] (-0.1,0)  -- (4,0) node[below ] {$f(a)$};
\draw[thin,draw=black,->] (0,-0.1) -- (0,2.5) node[left] {$f(b)$};
\draw[fill=green!80,fill opacity=0.5] (0,0) rectangle (3,2);
\node [circle,draw=blue,fill=blue,thick, inner sep=0pt,minimum size=1mm ,label=above right:$b$](b) at (3,0) {};
\node [circle,draw=blue,fill=blue,thick, inner sep=0pt,minimum size=1mm ,label=above right:$a$](a) at (0,2) {};
\node[label=below:$r$] at (b) {};
\node[label=left:$s$] at (a) {};
\node[label=left:$0$] at (0,0) {};
\node[label=below:$0$] at (0,0) {};
\node [circle,draw=blue,fill=blue,thick, inner sep=0pt,minimum size=1mm ,label=above right:$\toppoint$](T) at (3,2) {};
\node [circle,draw=blue,fill=blue,thick, inner sep=0pt,minimum size=1mm ,label=above right:$\bottompoint$](B) at (0,0) {};
\end{tikzpicture}
%
%
\begin{tikzpicture}
\draw[thin,draw=black,->] (-0.1,0)  -- (3,0) node[below] {$g(a)$};
\draw[thin,draw=black,->] (0,-0.1) -- (0,3.5) node[left] {$g(b)$};
\draw[fill=green!80,fill opacity=0.5] (0,0) rectangle (2,3);
\node [circle,draw=blue,fill=blue,thick, inner sep=0pt,minimum size=1mm ,label=above right:$b$](b) at (2,0) {};
\node [circle,draw=blue,fill=blue,thick, inner sep=0pt,minimum size=1mm ,label=above right:$a$](a) at (0,3) {};
\node[label=below:$s$] at (b) {};
\node[label=left:$r$] at (a) {};
\node[label=left:$0$] at (0,0) {};
\node[label=below:$0$] at (0,0) {};
\node [circle,draw=blue,fill=blue,thick, inner sep=0pt,minimum size=1mm ,label=above right:$\bottompoint$](B) at (2,3) {};
\node [circle,draw=blue,fill=blue,thick, inner sep=0pt,minimum size=1mm ,label=above right:$\toppoint$](T) at (0,0) {};
\end{tikzpicture}
\end{center}
\caption{The metric space $N_{r,s}$ and its Isbell completion pictured as $\Fix_{RL}(N_{r,s})$ and $\Fix_{LR}(N_{r,s})$}
\label{Fig:ArsIsbellCompletion}
\end{figure}
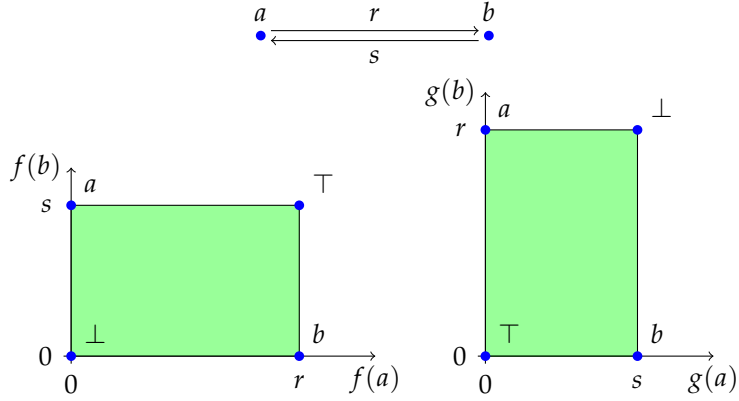
We consider the simplest asymmetric metric space $N_{r,s}$ where points $a$ and $b$ are  distances $r$ and $s$ from each other, as in Figure~\ref{Fig:ArsIsbellCompletion}.  By a calculation similar to the one above we can calculate the fixed set $\Fix_{RL}(N_{r,s})$ and hence the Isbell completion. 
 \[
 I(N_{r,s})\cong \Fix_{RL}(N_{r,s})\cong
 \Bigl\{(\alpha,\beta) \in \R^2\bigm | 0\le \alpha \le r,\ 0\le \beta \le s\Bigr\}
 \subset \R^2_{\text{asym}}
 \]
This is also pictured in  Figure~\ref{Fig:ArsIsbellCompletion}: there we see $N_{r,s}$ is clearly isometrically embedded and there are top and bottom points $\toppoint$ and $\bottompoint$.  Note that there is no tight span drawn, as the space $N_{r,s}$ is not a classical metric space and the tight span is not defined.

\subsubsection{Example of three points  with a symmetric metric}
We consider the three-point metric space $A_{r,s,t}$ with distances $r$, $s$ and $t$, where, without loss of generality we assume $r\ge s\ge t$: this is pictured in  Figure~\ref{Fig:ArstIsbellCompletion}.  The Isbell completion $I(X)$ is isometric to the fixed set $\Fix_{RL}(A_{r,s,t})$ which by definition is given by the following subset of $\R^3$ with the asymmetric generalized metric:
\begin{align*}
\bigl\{(\alpha,\beta,\gamma)\bigm | 
\alpha &=\max(t-\max(t-\alpha,r-\gamma,0),s-\max(s-\alpha,r-\beta,0),0)\\
\beta &=\max(r-\max(r-\beta,s-\alpha,0),t-\max(t-\beta,s-\gamma,0),0)\\
\gamma &=\max(s-\max(s-\gamma,t-\beta,0),r-\max(r-\gamma,t-\alpha,0),0)
\bigr\}.
\end{align*}
It transpires that this consists of four parts of planes, as pictured in  Figure~\ref{Fig:ArstIsbellCompletion}; however, this is not obvious, so we now prove it.
\begin{prop}
\label{Prop:ArstCalculation}
The Isbell completion $I(A_{r,s,t})$ for $r\ge s\ge t$ is isometric to the following subset of $\R^3_{\mathrm{asym}}$.
\begin{align*}
\bigl\{(\alpha,\beta,\gamma)\in [0,\infty]^3 &\bigm | 
 &&\alpha +r\le\beta+s =  \gamma+t\le s+t \\
  &\text{or}  &&\gamma+t\le\alpha+r=\beta+s \le r+s \\
  &\text{or}&&\beta+s \le \gamma+t = \alpha +r \le r+t\\
 &\text{or}  &&\alpha=0,\  \beta \le r-s,\  \gamma \le r-t\bigr\}
 \subset \R^3_{\mathrm{asym}}
 \end{align*}
 \end{prop}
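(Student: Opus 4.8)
The plan is to analyse $I(A_{r,s,t}) \cong \Fix_{RL}(A_{r,s,t})$ through the three fixed-point equations in $(\alpha,\beta,\gamma) := (f(a),f(b),f(c))$ displayed just before the proposition, together with the standing constraints $\alpha,\beta,\gamma \ge 0$. The first and decisive step is a change of variables: set $A := \alpha+r$, $B := \beta+s$, $C := \gamma+t$, so that $\alpha,\beta,\gamma\ge 0$ becomes $A\ge r$, $B\ge s$, $C\ge t$. A short manipulation of truncated differences shows that in these variables
\begin{align*}
 L(f)(a) &= \max\bigl(0,\,(s+t)-\min(B,C)\bigr),\\
 L(f)(b) &= \max\bigl(0,\,(r+t)-\min(A,C)\bigr),\\
 L(f)(c) &= \max\bigl(0,\,(r+s)-\min(A,B)\bigr),
\end{align*}
and then, feeding these back through $R$, that the three fixed-point equations become the manifestly symmetric system
\begin{align*}
 A &= \max\bigl(r,\ \min(A,B,r+s),\ \min(A,C,r+t)\bigr),\\
 B &= \max\bigl(s,\ \min(A,B,r+s),\ \min(B,C,s+t)\bigr),\\
 C &= \max\bigl(t,\ \min(A,C,r+t),\ \min(B,C,s+t)\bigr).
\end{align*}
(The pattern: the coordinate of weight $w\in\{r,s,t\}$ contributes the constant $w$, and each unordered pair of coordinates contributes the minimum of that pair capped by the sum of the two weights.) Carrying out this reduction carefully is the conceptual core of the proof; it is just bookkeeping with nested $\min$ and $\max$.

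Next I would linearise each equation. Since $L$ and $R$ form an adjunction (Theorem~\ref{Thm:IsbellAdjunction}), $RL(f)\le f$ pointwise, so in each of the three displayed equations the right-hand side is automatically $\le$ the left-hand side, and the equation is equivalent to the reverse inequality --- for the first, $A \le \max\bigl(r,\min(A,B,r+s),\min(A,C,r+t)\bigr)$. Using $A\ge r$ and $\min(A,\cdot)\le A$, this unwinds to the disjunction: $A=r$, or ($A\le B$ and $A\le r+s$), or ($A\le C$ and $A\le r+t$); likewise for $B$ and for $C$. Hence $\Fix_{RL}(A_{r,s,t})$ is precisely the set of triples $(A,B,C)$ with $A\ge r$, $B\ge s$, $C\ge t$ that satisfy all three disjunctions at once.

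The remaining work is a finite case analysis, organised by which of $A,B,C$ is smallest, with the degenerate alternatives $A=r$, $B=s$, $C=t$ treated separately. If $A=\min(A,B,C)$ then its disjunction holds automatically, while the disjunctions for $B$ and $C$ force $B=C$ and bound the common value by $s+t$; translating back, this is the first listed region $\alpha+r\le\beta+s=\gamma+t\le s+t$. The two remaining ``smallest-coordinate'' cases produce the second and third regions in exactly the same way. The genuinely different possibility is that a coordinate sits at its floor: the relevant one is $A=r$ (that is, $\alpha=0$) with $B,C\le r$, which translates to $\alpha=0$, $\beta\le r-s$, $\gamma\le r-t$ --- the fourth region. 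For the converse one simply checks directly that each of the four regions satisfies all three equations, which is quick once the simplified system is available. Finally, that the induced metric on this subset of $\presheaf{A_{r,s,t}}$ is the asymmetric $\sup$ metric of $\R^3_{\mathrm{asym}}$ expressed in the coordinates $(\alpha,\beta,\gamma)$ is immediate from the formula for $\dd_{\presheaf X}$ together with Lemma~\ref{Lemma:EasyMetric}, so no extra work is needed there.

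I expect the main obstacle to be purely the combinatorial bookkeeping of the case analysis: there are several sub-cases (the orderings of $A,B,C$, which argument realises each outer $\max$, and the boundary behaviour when one of the capping inequalities $A\le r+s$, $A\le r+t$, $B\le s+t$, $\dots$ is tight), and it is easy to drop or double-count a region or to overlook one of the caps. The change of variables in the first step is exactly what keeps this manageable; in the original coordinates the triply-nested $\max$ equations are very awkward to handle by hand.
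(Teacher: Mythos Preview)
Your proposal is correct and follows essentially the same route as the paper: the same linear change of variables to $(A,B,C)=(\alpha+r,\beta+s,\gamma+t)$, the same simplified $\min$/$\max$ system, and a finite case analysis to extract the four regions, with sufficiency checked by direct substitution. The only cosmetic differences are that you compute $L(f)$ explicitly as an intermediate step and invoke the adjunction inequality $RL(f)\le f$ to reduce each equation to a one-sided disjunction, whereas the paper organises the necessity argument by the dichotomy $A=r$ versus $A>r$ (and then $A<B$, $A=B$, $A>B$) rather than by which of $A,B,C$ is smallest.
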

\begin{proof}
The proof is just a calculation and not very informative.  First we make the following linear change of variable which makes things slightly less messy.
\[A:=\alpha+r,\quad B:=\beta+s, \quad C:=\gamma+t.\]
The equations to be solved then become the following:
\begin{align*}
A &=\max(\min(A,C,t+r),\min(A,B,s+r),r)\tag{a}\label{eq:A}\\ 
B &=\max(\min(B,A,r+s),\min(B,C,t+s),s)\tag{b}\label{eq:B}\\
C &=\max(\min(C,B,s+t),\min(C,A,r+t),t)\tag{c}\label{eq:C}
\end{align*}
We will show the solution consists of points $(A,B,C)$ with $r\le A$, $s\le B$, $t\le C$ such that one of the following hold:
\begin{gather}
C\le B=A\le r+s
\tag{1}\label{eq:1}\\ 
 A\le C=B\le s+t
\tag{2}\label{eq:2}\\
 B\le A=C\le r+t 
\tag{3}\label{eq:3}\\
  A=r,\ B\le r,\ C\le r.\tag{4}\label{eq:4}
 \end{gather}
These can easily be verified to be sufficient conditions by substituting them into the equations~\eqref{eq:A},~\eqref{eq:B} and~\eqref{eq:C}.  Before showing that they are necessary, note from~\eqref{eq:A} that \[r\le A \le s+r.\]
Similarly from~\eqref{eq:B} and~\eqref{eq:C}, we get
 \[s\le B\le r+s,\qquad t\le C\le r+t.\]
 We show the necessity of the conditions by splitting into cases $A=r$ and $A > r$, with the latter being split into subcases $A<B$, $A=B$ and $A> B$.
 
Consider first the case that $A=r$.   From~\eqref{eq:B} we deduce that
\[B\le r \quad\text{or}\quad(r<B\le C\ \text{and}\ B\le s+t).\]
From~\eqref{eq:C} we deduce that
\[C\le r \quad\text{or}\quad(r<C\le B\ \text{and}\ C\le s+t).\]
From these we can deduce that
\[(B\le r \ \text{and}\  C\le r)\quad\text{or}\quad r=A<C=B\le s+t .\]
which are covered by~\eqref{eq:2} and~\eqref{eq:4}.

Consider now the case $A>r$.  We will look at the subcases $A<B$, $A=B$ and $A> B$.
Suppose that $A<B$.   Then also $s\le r<A<B$ so~\eqref{eq:B} gives $B\le C$ and $B\le s+t$.  From~\eqref{eq:C} we deduce that either $C\le B$ or $(B<C\text{ and }C=t)$, but the latter can't hold as we know $t\le s < B$ thus the former is true, and so we deduce $B=C$.  Putting this together, we conclude
 \[r< A<C =B\le s+t,\]
which is covered by~\eqref{eq:2}.

Suppose $A=B$.  From~\eqref{eq:C} then either $C\le A$ or $A< C=t$ but $t\le r\le A$ so the latter can not hold and so we deduce
 \[t \le C \le A = B \le r+s,\]
which is covered by~\eqref{eq:1}.

Suppose finally that $B<A$.  From~\eqref{eq:A} we see that $A\le C$ and $A\le t+r$.  As we know that $t\le s\le B<A$ we deduce that $t\le B<C$.  So from~\eqref{eq:C} we see that $C\le A$ and $C\le r+t$.  Putting this all together we get
 \[s\le B<A=C\le r+t,\]
which is covered by~\eqref{eq:3}.

Thus equations~\eqref{eq:1}--\eqref{eq:4} give necessary and sufficient conditions for the solutions of the original equations.
\end{proof}

\section{The Isbell completion and the tight span}
In this section we show that the tight span of a classical metric space is contained in the Isbell completion.  We then prove that the directed tight span of Hirai and Koichi coincides with the Isbell completion.
\subsection{The classical tight span}
\label{Section:TightSpanInIsbell}
Here we show that the Isbell's tight span of a classical metric space is contained in the Isbell completion of the metic space.
\begin{thm}
\label{Thm:TightSpanInIsbell}
If $\X$ is a classical metric space then the tight span $T(\X)$ is (isometric to) the maximal classical metric space inside the generalized metric space $I(\X)$ which contains the image of $\X$. 
\end{thm}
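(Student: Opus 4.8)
The plan is to establish two inclusions which together identify $T(\X)$ as the \emph{largest} classical metric subspace of $I(\X)$ containing $\Yoneda(\X)$. Throughout I identify $I(\X)$ with $\Fix_{RL}(\X)\subseteq\presheaf\X$ via $f\mapsto(f,L(f))$, as in Section~\ref{Section:DefnIsbell}, and I write $f^{\ast}(x):=\sup_{y\in\X}\bigl(\dd(x,y)\mminus f(y)\bigr)$. The single observation that makes everything run is that, since $\X$ is \emph{symmetric}, the two halves $L\colon\presheaf\X\to\copre\X^{\op}$ and $R\colon\copre\X^{\op}\to\presheaf\X$ of the Isbell adjunction are given by the very same formula $f\mapsto f^{\ast}$: indeed $L(f)(y)=\sup_{x}(\dd(x,y)\mminus f(x))=\sup_{x}(\dd(y,x)\mminus f(x))=f^{\ast}(y)$, and likewise for $R$. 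Consequently the tight-span condition~\eqref{Eqn:SymDuality}, namely $f=f^{\ast}$, is exactly the condition $L(f)=f$; and any $f$ satisfying it also satisfies $RL(f)=f^{\ast\ast}=f$.

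First I would show $T(\X)$ is a classical metric subspace of $I(\X)$ containing $\Yoneda(\X)$. By the preceding paragraph, $f\in T(\X)\Rightarrow RL(f)=f$, so $T(\X)\subseteq\Fix_{RL}(\X)\isometric I(\X)$. For $x_{0}\in\X$ the presheaf $\dd({-},x_{0})=\Yoneda(x_{0})$ lies in $T(\X)$, since the triangle inequality (with symmetry) gives $\dd(x,y)\mminus\dd(y,x_{0})\le\dd(x,x_{0})$ for all $y$, with equality at $y=x_{0}$; hence $\Yoneda(\X)\subseteq T(\X)$. Finally, by Lemma~\ref{Lemma:EasyMetric} the inclusion $\Fix_{RL}(\X)\hookrightarrow I(\X)$ is an isometry whose metric is the restriction of $\dd_{\presheaf\X}$; on $T(\X)$, where $f=f^{\ast}$ forces the two one-sided suprema to coincide, this restriction equals $\sup_{x}\lvert f(x)-g(x)\rvert$, i.e.\ the classical tight-span metric. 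So $T(\X)$, with the metric induced from $I(\X)$, genuinely is (isometric to) the tight span and is symmetric.

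Next I would prove maximality. Let $S\subseteq I(\X)$ be any subset with $\Yoneda(\X)\subseteq S$ whose induced generalized metric is a classical metric. Since $I(\X)$ is skeletal (it embeds in the skeletal space $\presheaf\X$), the only substantive extra hypothesis this places on $S$ is that $\dd_{I(\X)}$ is \emph{symmetric} on $S$. Take $P=(f,g)\in S$; then $L(f)=g$. For every $x\in\X$ we have $\Yoneda(x)\in S$, so symmetry together with the identities $\dd_{I(\X)}(\Yoneda(x),P)=f(x)$ and $\dd_{I(\X)}(P,\Yoneda(x))=g(x)$ (the theorem on distances to and from Yoneda points) forces $f(x)=g(x)$. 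Hence $f=g=L(f)$, which by the first paragraph is exactly condition~\eqref{Eqn:SymDuality}; thus $f\in T(\X)$ and $P=(f,L(f))$ lies in the copy of $T(\X)$ inside $I(\X)$. Therefore $S\subseteq T(\X)$, and combined with the previous step this shows $T(\X)$ is the maximal such subspace.

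The only content-bearing point — and hence the main obstacle — is the reduction of the tight-span condition~\eqref{Eqn:SymDuality} to the statement ``$g=f$'' for a point $(f,g)\in I(\X)$; this rests entirely on the coincidence $L=R$ in the symmetric case, and on keeping straight the three metrics in play ($\dd_{\presheaf\X}$, $\dd_{\copre\X^{\op}}$, $\dd_{I(\X)}$) so that ``classical metric subspace'' really does recover the symmetric tight-span metric. Once that bookkeeping is in place, every remaining step is either a one-line estimate or a direct appeal to Lemma~\ref{Lemma:EasyMetric}, Theorem~\ref{Thm:IsbellAdjunction}, and the Yoneda-isometry theorem.
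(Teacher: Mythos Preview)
Your proof is correct and follows essentially the same route as the paper's: embed $T(\X)$ into $I(\X)$ via $f\mapsto(f,f)$ (equivalently, via $\Fix_{RL}$), then obtain maximality by observing that any point $(f,g)$ in a symmetric subspace containing $\Yoneda(\X)$ must satisfy $f(x)=g(x)$ for all $x$. You supply a bit more detail than the paper does --- in particular the explicit remark that $L$ and $R$ are given by the same formula $f\mapsto f^{\ast}$ when $\X$ is symmetric, and a direct justification via Lemma~\ref{Lemma:EasyMetric} that the induced metric on $T(\X)$ is symmetric (the paper instead refers back to Section~\ref{Section:DefnOfTightSpan} and Dress for this) --- but the architecture of the argument is the same.
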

\begin{proof}
In Section~\ref{Section:DefnOfTightSpan} the tight span $T(\X)\subset \presheaf{\X}$ is characterized by
  \[T(\X)=\bigl\{f\in \presheaf{\X}\bigm | f(x)=\sup_{y \in \X} (\dd(x,y)\mminus f(y)) \quad \text{for all }x\in \X\bigr\} .\]
As $\X$ is symmetric, presheaves and copresheaves are the same thing, so there is a well-defined function $T(\X)\hookrightarrow I(\X)$ given by $f\mapsto (f,f)$.  Indeed it can be seen that $T(\X)$ is actually a subset of $\Fix_{RL}(X)\in \presheaf{\X}$ and so inherits the same generalized metric, meaning that the inclusion is an isometry. 

To see that it is the maximal classical metric subspace containing $\X$ observe than any point $(f,g)\in I(\X)$ inside a classical metric subspace containing $\X$ must have the same distance both to and from any given point of $\X$, in other words we must have $f=g$, thus $(f,g)\in T(\X)$ and so $T(\X)$ is maximal. 
\end{proof}
This is illustrated in Figures~\ref{Fig:AdIsbellCompletion} and~\ref{Fig:ArstIsbellCompletion}.
\subsection{The directed tight span of Hirai and Koichi}
For a finite generalized metric space with no infinite distances, Hirai and Koichi~\cite{HiraiKoichi:DirectedDistances} defined a `tight span' which they denoted $T_\mu$, where $\mu$ was the generalized metric.  For a generalized metric space $X$ we will use the notation $\HK(X)$ and refer to it as the Hirai-Koichi tight span.  Their definition is analogous to what, for a classical metric space $\X$, we called the injective envelope $\EE(\X)$.  The definition can be easily generalized to all generalized metric spaces as follows.

Let $P(X)$ be the set of pairs $(f,g)$ of functions $f,g\colon X \to [0,\infty]$ such that
  \[f(x)+g(y)\ge \dd(x,y)\quad\text{for all }x,y \in X.\]
We will call such pairs \definition{triangular}, as the above inequality is supposed to be suggestive of the triangle inequality:  $f(x)$ being like the distance from $x$ to the `point' $(f,g)$ and $g(y)$ being like the distance from that `point' to $y$.  A triangular pair $(f,g)\in P(X)$ is said to be \definition{minimal} if for any triangular pair $(f',g')\in P(X)$ such that for all $x,y\in X$ we have $f'(x)\le f(x)$ and $g'(y)\le g(y)$ then $(f',g')=(f,g)$.  The \definition{Hirai-Koichi tight span} $\HK(X)$ is defined to be the set of minimal triangular pairs in $P(X)$, it is equipped with the following metric:
  \[
    \dd_{\HK(X)}\left( (f,g),(f',g')\right) := 
    \sup_{x\in X} \left\{\max\left( f'(x)\mminus f(x),g'(x)\mminus g(x)\right)\right\}.
  \]
We can now show that this coincides with the Isbell completion.
\begin{thm}
\label{Thm:HiraiKoichi}
 For a generalized metric space $X$, the Isbell completion and Hirai-Koichi tight span coincide, conisdered as subspaces of the same generalized metric space:
  \[I(X)=\HK(X)\subset \left( [0,\infty]^X \times [0,\infty]^X\right)_{\mathrm{asym}}.\]
\end{thm}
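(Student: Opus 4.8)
The plan is to show the two subsets of $\left([0,\infty]^X\times[0,\infty]^X\right)_{\mathrm{asym}}$ are literally equal, which then forces the metrics to agree since both are inherited from the ambient asymmetric sup-metric. The first thing I would do is reconcile the two descriptions of $I(X)$: a pair $(f,g)$ lies in $I(X)$ iff $L(f)=g$ and $R(g)=f$, i.e.
\[
 g(y)=\sup_{x\in X}\bigl(\dd(x,y)\mminus f(x)\bigr)
 \quad\text{and}\quad
 f(x)=\sup_{y\in X}\bigl(\dd(x,y)\mminus g(y)\bigr)
 \quad\text{for all }x,y\in X.
\]
Note that such a pair is automatically triangular: rearranging $f(x)\ge \dd(x,y)\mminus g(y)$ gives $f(x)+g(y)\ge\dd(x,y)$. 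So $I(X)\subseteq P(X)$, and it remains to identify $I(X)$ with the minimal elements of $P(X)$.

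For the inclusion $I(X)\subseteq\HK(X)$, I would take $(f,g)\in I(X)$ and suppose $(f',g')\in P(X)$ is dominated by it, so $f'\le f$ and $g'\le g$ pointwise. Triangularity of $(f',g')$ says $f'(x)+g'(y)\ge\dd(x,y)$, hence $f'(x)\ge\dd(x,y)\mminus g'(y)\ge\dd(x,y)\mminus g(y)$ for every $y$; taking the sup over $y$ and using the fixed-point equation for $f$ gives $f'(x)\ge f(x)$, so $f'=f$. Symmetrically (using $g=L(f)$ and $f'=f$, so $g'(y)\ge\dd(x,y)\mminus f'(x)=\dd(x,y)\mminus f(x)$) we get $g'\ge g$, hence $g'=g$. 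Thus $(f,g)$ is minimal, i.e.\ in $\HK(X)$.

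For the reverse inclusion $\HK(X)\subseteq I(X)$, I would take a minimal triangular pair $(f,g)$ and show it satisfies the two fixed-point equations. Define $\tilde g:=L(f)$ and observe that triangularity of $(f,g)$ is exactly the statement $g\ge\tilde g$ pointwise (since $g(y)\ge\dd(x,y)\mminus f(x)$ for all $x$ iff $g(y)\ge\sup_x(\dd(x,y)\mminus f(x))=\tilde g(y)$). Also $(f,\tilde g)\in P(X)$ by the adjunction-type rearrangement above. Minimality of $(f,g)$ among pairs dominated by it would give $g=\tilde g=L(f)$ — but I need to be slightly careful, since minimality quantifies over pairs dominated in \emph{both} coordinates, and here the $f$-coordinate is unchanged, so $(f,\tilde g)$ is indeed dominated by $(f,g)$ and the conclusion $g=L(f)$ follows. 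Now set $\tilde f:=R(g)$; the same argument run on the other side ($\tilde f\le f$ because $(f,g)$ triangular, $(\tilde f,g)\in P(X)$) and minimality give $f=R(g)$. So $(f,g)\in I(X)$.

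Finally, once the underlying sets coincide, the metric on $I(X)$ as computed via Lemma~\ref{Lemma:EasyMetric} is $\dd_{\presheaf X}(f,f')=\sup_x(f'(x)\mminus f(x))$, while the stated $\HK$-metric is $\sup_x\max(f'(x)\mminus f(x),\,g'(x)\mminus g(x))$; but on $I(X)$ Lemma~\ref{Lemma:EasyMetric} tells us $\sup_x(f'(x)\mminus f(x))=\sup_x(g'(x)\mminus g(x))$, so the $\max$ is redundant and the two metrics agree. The main obstacle I anticipate is the bookkeeping in the $\HK\subseteq I$ direction — making sure minimality is applied to pairs that really are coordinatewise dominated, and checking that the intermediate pairs $(f,L(f))$ and $(R(g),g)$ genuinely lie in $P(X)$ — but this is exactly the content of the adjunction inequality already established in Theorem~\ref{Thm:IsbellAdjunction}, so no new ideas beyond careful unwinding of definitions should be needed.
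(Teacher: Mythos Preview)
Your proof is correct and follows essentially the same strategy as the paper: establish both inclusions by unwinding the triangularity and fixed-point conditions and invoking minimality. Your version is in fact slightly more economical than the paper's --- the paper separately verifies that $f$ is a presheaf (via an auxiliary function $f_{xy}$) before showing $g=L(f)$, whereas you correctly observe this is redundant once $f=R(g)$ is known; and the paper's minimality argument for $I(X)\subset\HK(X)$ goes through an $\epsilon$-approximation to the supremum, while your direct inequality $f'(x)\ge \dd(x,y)\mminus g(y)$ followed by taking the sup accomplishes the same thing more cleanly.
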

\begin{proof}
  As both the Hirai-Koichi tight span and the Isbell completion are both submetric spaces of the same generalized metric spaces it suffices to prove that $\HK(X)\subset I(X)$ and that $I(X)\subset \HK(X)$.
  
  To show that $\HK(X)\subset I(X)$ we need to show that for any minimal triangular pair $(f,g)\in P(X)$ we have that $f$ is a presheaf, $g$ is a co-presheaf, $L(f)=g$ and $f=R(g)$.
 To see that $f$ is a presheaf, fix $x$ and $y$ and define a function $f_{xy}$ by 
  \[
    f_{xy}(w):=
    \begin{cases}
      f(y)+\dd(x,y)& \text{for }w=x\\
      f(w) &\text{otherwise}
      \end{cases}
  \]
then the pair $(f_{xy}, g)$ is triangular  and  so by the minimality of $(f,g)$ we must have $f_{xy}(x)\ge f(x)$.  From this we obtain
  \[\dd(x,y)\ge f(x)\mminus f(y)\]
and so $f$ is a presheaf.   The proof that $g$ is a co-presheaf is analogous .

Now to show that $g=L(f)$, fix $y\in X$.  As $(f,g)$ is triangular,
 \[g(y)\ge \dd(x,y)\mminus f(x)\quad \text{for all }x\in X.\]
So $g(y)\ge L(f)(y)$ for all $y\in X$.  But it is easy to check that $(f,L(f))$ is triangular.  Thus by the minimality of $(f,g)$ we must have $g=L(f)$, as required.
Similarly, $f=R(g)$.  Thus $(f,g)\in I(X)$ and so $\HK(X)\subset I(X)$.

We now have to show $I(X)\subset \HK(X)$.  Suppose $(f,g)\in I(X)$, then $g=L(f)$ and $f=R(g)$, we need to show that $(f,g)$ is triangular and minimal.  Fix $x,y\in X$.  As $f=R(g)$ we have
  \[f(x)=\sup_{z\in X}\left\{\dd(x,y)\mminus g(z)\right\}\ge \dd(x,y)\mminus g(y),\]
so $f(x)+g(y)\ge \dd(x,y)$ and $(f,g)$ is triangular. 

To show minimality, suppose that $(f'',g'')\in P(X)$ is a triangular pair which satisfies 
  \[f''(x)\le f(x), \ g''(y)\le g(y)\quad \text{for all } x,y \in X.\]
Fix $x\in X$.  By the least upper bound property of $L(g)$ we know that for all $\epsilon >0$ there exists a $y\in X$ such that $f(x)\le \dd(x,y)\mminus g(y)+\epsilon$.  Thus
  \[f''(x)\le f(x)<\dd(x,y)\mminus g(y)+\epsilon\le f''(x)+g''(y)-g(y)+\epsilon\le f''(x)+\epsilon.\]
Thus $f(x)=f''(x)$, and so $f=f''$.  Similarly,  $g=g''$  and so $(f,g)$ is minimal, hence $(f,g)\in \HK(X)$, whence we deduce $I(X)\subset \HK(X)$, and we can conclude $I(X)=\HK(X)$ as required.
\end{proof}

\section{Weighted colimits and semi-tropical modules}
In this section we see the definition and some examples of the notion of weighted colimit in the context of generalized metric spaces.  We then see how the idea of having all colimits is related to being a module over the semi-tropical semiring.  There is a completely dual story involving the notion of weighted limit.  We will need the dual story but will summarize those results later.
\subsection{Definition}
We will look at the definition of weighted colimit and associated concepts and at some examples.

The concept of weighted (or indexed) colimit is natural in the context of enriched categories  and generalizes the concept of ordinary colimit in ordinary category theory.  We will usually just refer to \emph{colimits}, dropping the prefix \emph{weighted}.

In order to define a weighted colimit in a generalized metric space $X$ we need some ingredients.  A \definition{shape} $D$ is a generalized metric space; a \definition{diagram} of shape $D$ is a short map $J\colon D\to X$; and a \definition{weighting} is a presheaf $W\colon D^{\op}\to [0,\infty]$.  A \definition{colimit} of the diagram $J$ with weighting $W$ is an object $c$ of $X$ which satisfies 
  \[
   \dd_X( c, x ) = \sup_{d\in D} \bigl\{\dd_X(J(d),x)\mminus W(d)\bigr\}
   \qquad\text{for all }x\in X.
   \]
This is not a terribly enlightening definition but hopefully the meaning will become clearer after some examples below, we perhaps want to think of it as a weighted sum,
  \[\text{``}c=\sum_{d\in D}W(d)\cdot J(d)\text{''},\]
or, to use symbols more commonly used in tropical mathematics, 
  \[\text{``}c=\bigoplus_{d\in D}W(d)\odot J(d)\text{''}.\]
This idea will be made precise later.
There is a slight notation clash here in that $\oplus$ is often used in category theory for things which are both products and coproducts, but we won't mean that sense here.

For a given $W$ and $J$ a colimit might or might not exist, and if it does exist then in general there might be other isomorphic colimits.  However, if $X$ is skeletal then colimits, when they exist, are uniquely defined.  We write $\colim{W}{J}$ for `the' colimit of $W$ and $J$ if it exists, so if $c$ is a colimit we can write $c\isomorphic  \colim{W}{J}$.  
   
A generalized metric space which has colimits for all diagrams and weightings is said to be \definition{cocomplete}.  If it has colimits for all diagrams and weightings on shapes which are finite then it is said to be \definition{finitely cocomplete}.

A short map $F\colon X\to Y$ is called \definition{cocontinuous} if it preserves colimits in the sense that for any $J\colon D\to X$ and $W\colon D^\op\to[0,\infty]$ for which the colimit $\colim{W}{J}$ exists we have $F(\colim{W}{J})\isomorphic \colim{W}{F\circ J}$.  It is worth recording here for later use that left adjonts are cocontinuous.
\begin{lemma}
\label{lem:LeftAdjCocont}
If $F\colon X\to Y$ is a short map is short map between generalized metric space with a right adjoint then $F$ is cocontinuous.
\end{lemma}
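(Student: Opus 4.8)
The plan is to unwind the definitions directly, using the adjunction to transport the colimit equation from $X$ to $Y$. Let $G\colon Y\to X$ be a right adjoint of $F$, so that $\dd_Y(F(x),y)=\dd_X(x,G(y))$ for all $x\in X$ and $y\in Y$ (this is the defining property of an adjunction, as in Section~\ref{Section:MetricSpaces}). Suppose we are given a shape $D$, a diagram $J\colon D\to X$ and a weighting $W\colon D^\op\to[0,\infty]$ for which a colimit $c\isomorphic\colim{W}{J}$ exists, so that
\[
\dd_X(c,x)=\sup_{d\in D}\bigl\{\dd_X(J(d),x)\mminus W(d)\bigr\}\qquad\text{for all }x\in X.
\]
What I want to establish is that $F(c)$ satisfies the defining equation of $\colim{W}{F\circ J}$, that is,
\[
\dd_Y(F(c),y)=\sup_{d\in D}\bigl\{\dd_Y(F(J(d)),y)\mminus W(d)\bigr\}\qquad\text{for all }y\in Y .
\]

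The verification is a short three-step chain that I would carry out for an arbitrary fixed $y\in Y$. First, apply the adjunction to rewrite $\dd_Y(F(c),y)=\dd_X(c,G(y))$. Second, apply the colimit equation for $c$ at the point $x=G(y)\in X$, obtaining $\sup_{d\in D}\bigl\{\dd_X(J(d),G(y))\mminus W(d)\bigr\}$. Third, apply the adjunction again, this time termwise inside the supremum, since $\dd_X(J(d),G(y))=\dd_Y(F(J(d)),y)$ for each $d$; this lands exactly on the required right-hand side. Hence $F(c)$ is a colimit of $W$ and $F\circ J$, so $F(c)\isomorphic\colim{W}{F\circ J}$, which is precisely cocontinuity of $F$.

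I do not expect a genuine obstacle here; the only subtlety worth a sentence is that in the enriched setting a colimit is characterized by the distance formula above rather than by a chosen universal element, so there is no issue of $F$ sending "the" colimit to "the" colimit — any object satisfying the formula qualifies, and the argument uses only the formula. If one prefers, one can additionally remark that when $X$ and $Y$ are skeletal the isomorphism $\isomorphic$ becomes an honest equality $F(\colim{W}{J})=\colim{W}{F\circ J}$.
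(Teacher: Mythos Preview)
Your proof is correct and is essentially the same as the paper's: both apply the adjunction to rewrite $\dd_Y(F(c),y)$ as $\dd_X(c,G(y))$, invoke the colimit equation at $G(y)$, and then apply the adjunction termwise. The only cosmetic difference is that the paper writes the supremum in the presheaf-metric notation $\dd_{\presheaf D}\bigl(W({-}),\dd_X(J({-}),G(y))\bigr)$ rather than spelling out the $\sup$ explicitly.
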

\begin{proof}
Let $G\colon Y\to X$ be the right adjoint.  Suppose that $J\colon D\to X$ is a diagram and $W\colon D^\op\to [0,\infty]$ is a weighting such that the colimit $\colim{W}{J}$ exists.  Then we use the adjointness twice to see
\begin{align*}
  \dd_Y(F(\colim{W}{J}),y)&=
  \dd_X(\colim{W}{J},G(y))\\
  &=
  \dd_{\presheaf D}\Bigl(W({-}),\dd_X\bigl(J({-}),G(y)\bigr)\Bigr)\\
  &=  \dd_{\presheaf D}\Bigl(W({-}),\dd_X(F\circ J({-}),y)\Bigr),
\end{align*}
thus $F(\colim{W}{J})\isomorphic \colim{W}{F\circ J}$.  So $F$ is cocontinuous as required.
\end{proof}
   
Note that the definition of the colimit does not say anything about the distance \emph{to} the colimit; we will come back to this later in Section~\ref{Section:FormalColimits}.

\subsection{Examples}
We can now have a look at some examples.

\subsubsection{Initial points}
If the shape $D$ is the empty metric space $\emptyset$ then we use the unique maps $ \emptyset\to X$ and $ \emptyset\to [0,\infty]$, and as the supremum of an empty subset of $[0,\infty]$ is $0$ we get that a colimit, denoted by $\toppoint$ satisfies
 \[\dd(\toppoint,x)=0\qquad\text{for all }x\in X.\]
So a colimit $\toppoint$ of the empty diagram is to be thought of as some sort of `initial' point, being a distance $0$ \emph{to} every other point.  Classical metric spaces with more than one point can not have an initial point, but as we have seen, the Isbell completion of a metric space can have an initial point.  The asymmetric extended non-negative reals $\Rplus$ has $\infty$ as an initial point.

\subsubsection{Fat out points}
 If the shape $D$  is a singleton metric space $\{\ast\}$ with the diagram $J$ mapping to the point $x_0\in X$ and the weighting $W$ taking the value $r\in [0,\infty]$, then the colimit $O(x_0,r)$ satisfies
  \[\dd_X( O(x_0,r), x ) =\dd_X(x_0,x)\mminus r,\]
so can be thought of as a ``fat out point'' at $x_0$ of radius $r$.  See Figure~\ref{Fig:FatOutPoint}.  
I'd rather think of this as a fat point than a ball, as a ball has many points in it.
\begin{figure}[ht]
\begin{center}
\begin{tikzpicture}
  \draw [draw=red,fill=red!20] (0,0) circle (1);
  \draw [->] (0,0) to node[auto] {$r$} (30:1);
  \node [circle,draw=blue,fill=blue,thick, inner sep=0pt,minimum size=1mm ,label=below:$x_0$](x0) at (0,0) {};
  \node [circle,draw=blue,fill=blue,thick, inner sep=0pt,minimum size=1mm ,label=below:$x$](x) at (-10:5) {};
  \draw [->](-10:1) to node [auto,pos=0.15,sloped] {$\dd(O(x_0,r),x)$} (x);
\end{tikzpicture}
\end{center}
\caption{A fat out point.}
\label{Fig:FatOutPoint}
\end{figure}
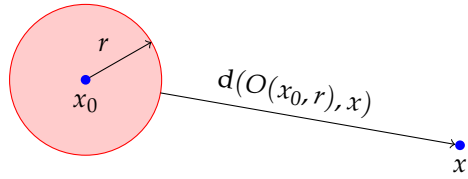

A classical metric space will typically not have such colimits.  The asymmetric metric space $[0,\infty]$ does have such fat out points: in that case $O(x_0,r)=x_0+r$.

\subsubsection{Coproducts}
If the shape $D$ consists of two points infinitely far apart, and we take the weighting $W$ to be the zero weighting, then writing $x_1,x_2\in X$ for the images of the two points under $J$ and $x_1\sqcup x_2$ for the colimit, we have
  \[\dd(x_1\sqcup x_2,x)= \max\bigl(\dd(x_1,x),\dd(x_2,x)\bigr)\qquad \text{for all }x\in X.\]
So $x_1\sqcup x_2$ should be thought of as a coproduct.  Again, in general, classical metric spaces will not have coproducts, however $[0,\infty]$ does, with $x_1\sqcup x_2=\min(x_1,x_2)$.

\subsubsection{Limits of sequences}
If the shape $D$ is the set of natural numbers $\N$ with infinite distance between each pair of points, then a diagram $\N\to X$ is the same thing as a sequence in~$X$.   Rutten~\cite[Propostion~1.1]{Rutten:WeightedColimitsMetricSpaces} showed how Cauchy sequences can be examined using weighted limits and colimits.   

\subsubsection{The asymmetric space $\Rplus$ is cocomplete}

The generalized metric space $\Rplus$ with its standard asymmetric metric has all colimits.  Given a diagram $J\colon D\to \Rplus$ and a weighting $W\colon D^{\op}\to \Rplus$ the colimit is given by
  \[\colim{J}{F}=\inf_{d\in D}(W(d)+J(d)).\]
I will leave the reader to draw the appropriate pictures of fat points, and deduce that this is immediately clear.  On the other hand, the symmetric metric structure of $[0,\infty]$ does not have such arbitrary colimits.

\subsection{Characterizing cocompleteness}
If we wish to show that a generalized metric space has all colimits then it suffices to consider shapes in which all of the the distances are infinite, we call these \definition{discrete} shapes.  If $D$ is a generalized metric space, then there is a discrete metric space $D^\delta$ called the \definition{discretization} of $D$ which has the same points as $D$, but has all non-zero distances becoming infinite.  There is an identity-on-points short map $\delta\colon D^\delta \to D$ and we can pull back any $D$-shaped diagram to a $D^\delta$-shaped diagram. 
Now using this we see that 
every colimit can be written as a colimit of a discrete diagram.
\begin{thm}
If $J\colon D \to X$ is a diagram then by pulling back we get a diagram $\delta^* J\colon D^\delta\to X$ and a discrete weighting $\delta^*W\colon D^\delta\to [0,\infty]$.  If $\colim{\delta^*W}{\delta^*J}$ exists then
  \[\colim{\delta^*W}{\delta^*J}\isomorphic \colim {W}{J}.\]
\end{thm}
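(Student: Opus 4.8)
The plan is to unpack the definition of weighted colimit and to observe that it refers to the weighting $W$ and the diagram $J$ only through their values on the underlying \emph{set} of the shape, never through the metric on the shape. Since $\delta\colon D^\delta\to D$ is the identity on points, $\delta^*J$ and $\delta^*W$ agree with $J$ and $W$ on points, so the colimit equation for $(\delta^*W,\delta^*J)$ and the colimit equation for $(W,J)$ will be literally the same equation. The only thing that genuinely needs to be checked is that $\delta^*W$ qualifies as a weighting on $D^\delta$; everything else is bookkeeping.

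First I would check that $\delta^*W$ is indeed a weighting, i.e.\ a presheaf on $(D^\delta)^\op$. Writing $\delta^*W(d)=W(\delta(d))=W(d)$, the presheaf inequality reads $\dd_{(D^\delta)^\op}(d,d')\ge \delta^*W(d)\mminus\delta^*W(d')$ for all points $d,d'$. When $d=d'$ both sides are $0$; when $d\ne d'$ the left-hand side is $\infty$ by definition of the discretization, so the inequality is automatic. Hence $\delta^*W$ is a weighting, the colimit $\colim{\delta^*W}{\delta^*J}$ makes sense, and by hypothesis it exists.

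Now set $c:=\colim{\delta^*W}{\delta^*J}$. By the defining property of a colimit, for every $x\in X$ we have $\dd_X(c,x)=\sup_{d\in D^\delta}\bigl\{\dd_X(\delta^*J(d),x)\mminus\delta^*W(d)\bigr\}$. Since $D^\delta$ has the same underlying set as $D$, and $\delta^*J(d)=J(\delta(d))=J(d)$ and $\delta^*W(d)=W(d)$ for every point $d$, the right-hand side equals $\sup_{d\in D}\bigl\{\dd_X(J(d),x)\mminus W(d)\bigr\}$. This is exactly the equation defining a colimit of the diagram $J$ with weighting $W$, so $c$ is such a colimit and therefore $c\isomorphic\colim{W}{J}$, which is the claim.

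The step requiring the most care is the purely definitional one: confirming that pulling back along $\delta$ leaves the values of $J$ and $W$ unchanged (because $\delta$ is the identity on objects) and that the weighting axiom becomes vacuous on a discrete shape. There is no analytic obstacle beyond this, precisely because the supremum in the colimit formula never involves the metric of $D$ itself, only the metric of $X$ together with the point-values of $J$ and $W$.
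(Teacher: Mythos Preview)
Your proof is correct and follows essentially the same route as the paper's: both reduce to the observation that the colimit supremum is indexed by the underlying set of the shape, on which $\delta$ acts as the identity. One tiny imprecision in your side-check that $\delta^*W$ is a presheaf: you assert $\dd_{D^\delta}(d,d')=\infty$ for all $d\neq d'$, but the discretization only sends \emph{non-zero} distances to $\infty$; the gap is harmless, since $\delta$ is a short map and hence $W\circ\delta^{\op}$ is automatically a presheaf.
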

\begin{proof}
If the colimit exists then we have
 \begin{align*}
   \dd_X(\colim{\delta^*W}{\delta^*J},x)
   &=
   \dd_{\presheaf{D^\delta}}\Bigl(\delta^*W\left({-}\right),\dd_X\bigl(\delta^*J\left({-}\right),x\bigr)\Bigr)\\
   &=\sup_{d\in D^\delta}\left\{\dd_{[0,\infty]}
        \Bigl(W\circ\delta(d),\dd_X\bigl(J\circ \delta(d),x\bigr)\Bigr)\right\}\\   
   &=\sup_{d\in D}\biggl\{\dd_{[0,\infty]}
         \Bigl(W({d}),\dd_X\bigl(J({d}),x\bigr)\Bigr)\biggr\}\\
   &=
   \dd_{\presheaf{D}}\Bigl(W({-}),\dd_X\bigl(J({-}),x\bigr)\Bigr) .     
  \end{align*}
Thus $\colim{\delta^*W}{\delta^*J}\isomorphic \colim {W}{J}$ as required.
\end{proof}
We immediately get the following corollary.
\begin{cor}
If $X$ has colimits for all weightings on \emph{discrete} shapes then $X$ has all colimits, i.e.~$X$ is cocomplete.

Similarly, If $X$ has colimits for all weightings on \emph{finite} discrete shapes then $X$ has all finite colimits, i.e.~$X$ is finitely cocomplete.
\end{cor}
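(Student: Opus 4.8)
The plan is to read this off directly from the theorem immediately preceding it, which rewrites an arbitrary colimit as a colimit over a discrete shape. Given any diagram $J\colon D\to X$ and any weighting $W\colon D^\op\to[0,\infty]$, I would form the discretization $D^\delta$ together with the canonical identity-on-points short map $\delta\colon D^\delta\to D$, and pull back to get the diagram $\delta^*J=J\circ\delta\colon D^\delta\to X$ and the weighting $\delta^*W=W\circ\delta\colon (D^\delta)^\op\to[0,\infty]$. Precomposition with a short map carries presheaves to presheaves, so $\delta^*W$ is a legitimate weighting; indeed on a discrete shape every function is automatically a presheaf, since all non-identity distances are $\infty$.

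Since $D^\delta$ is a discrete shape, the hypothesis provides that the colimit $\colim{\delta^*W}{\delta^*J}$ exists in $X$. By the theorem just proved, $\colim{\delta^*W}{\delta^*J}\isomorphic\colim{W}{J}$, so $\colim{W}{J}$ exists as well. As $J$ and $W$ were arbitrary, $X$ has all colimits, i.e.\ is cocomplete.

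For the finite statement I would simply note that discretization does not alter the underlying set of points, so if $D$ is finite then $D^\delta$ is a finite discrete shape; running the same argument under the stronger hypothesis (colimits for all weightings on finite discrete shapes) then shows every colimit over a finite shape exists, i.e.\ $X$ is finitely cocomplete.

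There is no real obstacle here: the substantive content was already isolated in the preceding theorem, and the only thing to check — routinely — is that the pulled-back pair $(\delta^*J,\delta^*W)$ genuinely is a diagram-and-weighting on $D^\delta$ and that finiteness is preserved under $(-)^\delta$.
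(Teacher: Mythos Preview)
Your proposal is correct and is exactly the argument the paper intends: the paper gives no proof beyond ``We immediately get the following corollary,'' and your reasoning---pull back along $\delta\colon D^\delta\to D$, invoke the hypothesis to get $\colim{\delta^*W}{\delta^*J}$, then apply the preceding theorem---is precisely how one reads it off. The observation that finiteness is preserved by discretization is the only additional remark needed for the second part, and you make it.
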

We can now characterize cocomplete generalized metric spaces as follows.
\begin{lemma}
\label{Lem:FatOutAndCoproducts}
If a generalized metric space $X$ has an binary products and all fat out points then it has all finite colimits.
\end{lemma}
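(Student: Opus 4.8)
The plan is to build an arbitrary finite colimit explicitly as a finite iterated coproduct of fat out points. By the Corollary immediately preceding the lemma it suffices to produce colimits for every weighting $W\colon D^\delta\to[0,\infty]$ on a finite discrete shape $D^\delta$ with points $d_1,\dots,d_n$ and every diagram $J\colon D^\delta\to X$; since $D^\delta$ is discrete, such a colimit is by definition a point $c$ with
\[
  \dd_X(c,x)=\max_{1\le i\le n}\bigl(\dd_X(J(d_i),x)\mminus W(d_i)\bigr)\qquad\text{for all }x\in X.
\]

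First I would, for each $i$, form the fat out point $c_i:=O(J(d_i),W(d_i))$, which exists by hypothesis and satisfies $\dd_X(c_i,x)=\dd_X(J(d_i),x)\mminus W(d_i)$. Then I would take the iterated binary coproduct $c:=c_1\sqcup c_2\sqcup\dots\sqcup c_n$, which exists since $X$ has binary coproducts; associativity of $\sqcup$ up to isomorphism is immediate from its defining equation, because $\dd_X\bigl((a\sqcup b)\sqcup c',x\bigr)=\max\bigl(\dd_X(a,x),\dd_X(b,x),\dd_X(c',x)\bigr)$, and one checks that the colimit equation depends on its solution only up to isomorphism. A direct computation then gives
\[
  \dd_X(c,x)=\max_{1\le i\le n}\dd_X(c_i,x)=\max_{1\le i\le n}\bigl(\dd_X(J(d_i),x)\mminus W(d_i)\bigr),
\]
so that $c\isomorphic\colim{W}{J}$; that is, $c$ is the required colimit.

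The one case the argument above does not cover is $n=0$, the empty (initial) colimit, which asks for a point $\toppoint$ with $\dd_X(\toppoint,x)=0$ for all $x$. Assuming $X$ is nonempty, I would pick any $x_0\in X$ and take the fat out point of infinite radius $O(x_0,\infty)$; since $a\mminus\infty=0$ for every $a\in[0,\infty]$, this satisfies $\dd_X(O(x_0,\infty),x)=\dd_X(x_0,x)\mminus\infty=0$, so it is an initial point. Together with the construction above this shows that every finite colimit exists, i.e.\ $X$ is finitely cocomplete.

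There is no real obstacle here: the entire content is the observation that a weighted colimit over a finite discrete shape is the coproduct of the copowers (fat out points) $O(J(d_i),W(d_i))$, plus the routine bookkeeping that binary coproducts iterate to finite ones and that the empty coproduct is recovered as a radius-$\infty$ fat out point. The only point needing a word of care is this last, nullary case.
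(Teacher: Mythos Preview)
Your proof is correct and follows essentially the same route as the paper: reduce to finite discrete shapes, build the colimit as an iterated coproduct of fat out points, and recover the initial point as $O(x_0,\infty)$. The paper phrases the construction as an induction on $|D|$ rather than writing out $c_1\sqcup\cdots\sqcup c_n$ directly, but the content is identical; your explicit flagging of the nonemptiness of $X$ needed for the nullary case is a small point the paper leaves implicit.
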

\begin{proof} By the above it suffices to prove that any discrete diagram $J$ and weighting $W$ has a colimit.  We proceed by induction on the number of objects in the shape $D$.  If $D=\emptyset$ then the colimit is the initial point which is given by the fat out point $O(x, \infty)$ for any $x\in X$.  If $D$ is not empty then we can write $D=D'\cup \{d\}$ and it is straightforward to show that $\colim{W}{J}=\colim{{W|_{D'}}}{J|_{D'}} \sqcup O(J(d),W(d))$.  The colimit on the right hand side exists by the inductive hypothesis.
\end{proof}

\subsection{Semi-tropical modules and colimits}
\label{Section:SemiTropical}
We can relate the notion of cocompletion to that of an action by the semiring $[0,\infty]$.
 
A \definition{semiring} (sometimes called a \definition{rig}) $(R,\oplus,\odot)$ is a ring without additive inverses, so there is A commutative addition $\oplus$ and a multiplication $\odot$ which have units $\zerounit$ and $\oneunit$ respectively, such that $\odot$ distributes over $\oplus$.   
A \definition{unital semiring morphism} will mean a function between semirings which preserves addition, multiplication and both units, $\zerounit$ and $\oneunit$.

We can see some examples.
\begin{itemize}
\item One standard example is $(\N_{\ge 0},+,\times)$ the non-negative integers with the usual addition and multiplication.  
\item Another example is $((-\infty,\infty],\min,+)$ which is the real numbers extended up to positive infinity, with the 	addition in the semiring being minimum and multiplication in the semiring being usual addition, the additive unit is $\infty$ and the multiplicative unit is $0$.  This is one of the variants of the \definition{tropical} semiring, sometimes $\max$ is used and the negative infinity is adjoined.   
\item The semiring of interest in this paper is $([0,\infty],\min,{+})$ and we will refer to it as the \definition{semi-tropical semiring} as it is half of the tropical semiring.  \item 
Another family takes the following form: for $(M,\oplus, \zerounit)$ a commutative monoid, the self monoid maps form a semiring $(\End(M),\oplus,\circ)$.
\end{itemize}

A \definition{module} (sometimes called a \definition{semi-module}) over a semiring $R$ is a commutative monoid $(M,\oplus,\zerounit)$ with an action $\odot\colon R\times M\to M$ satisfying the usual axioms for a module, which in this case can be most compactly summarized by saying that the function $r\mapsto r\odot{-}$ gives a unital semiring morphism $R\to \End(M)$.  An \definition{module morphism} between two $R$-modules is a map of monoids which preserves the $R$-action.  Clearly, any semiring can be thought of as a module over itself.

We are interested in modules over the semi-tropical  semiring $\bigl( [0,\infty],\min,+\bigr)$ and  will call such a module a \definition{semi-tropical module}.  The basic example is, unsurprisingly, the self-action on $\bigl( [0,\infty],\min\bigr)$ via addition.    A less obvious example is the monoid $\bigl( [0,\infty],\max\bigr)$ which becomes a semi-tropical module when we allow
 $\bigl( [0,\infty],\min,+\bigr)$ to act by truncated difference, so that $r\odot m:= m\mminus r$.  It is a straightforward calculation to show that this gives an action.

We are interested in semi-tropical module structures on generalized metric spaces and will require the action to interact with the generalized metric.  The way we wish to do so is encapsulated in the following definition.  A \definition{co-metric semi-tropical module} is a generalized metric space $(M,\dd)$ with a semi-tropical module structure $((M,\oplus),\odot)$ such that for every point $x\in M$ the distance to $x$ gives a semi-tropical module morphism
  \[\dd({-},x)\colon (M,\oplus)\to ([0,\infty],\max).\]
In other words,
  \[\dd(r\odot m\oplus s\odot n,x)=\max(\dd(m,x)\mminus r, \dd(n,x)\mminus s).\]
One example of such a thing would be the semi-tropical module $([0,\infty],\min)$ equipped with the standard asymmetric metric.  This example illustrates the following theorem, c.f.~\cite[Theorem~6.6.14]{Borceux:Handbook2}.
\begin{thm}
\label{Thm:CocompleteCometricModule}
A skeletal generalized metric space is finitely cocomplete if and only if can be given a co-metric semi-tropical module structure.
\end{thm}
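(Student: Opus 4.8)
The plan is to set up the dictionary
\[
\toppoint \ \longleftrightarrow\ \zerounit,\qquad
O(m,r)\ \longleftrightarrow\ r\odot m,\qquad
m\sqcup n\ \longleftrightarrow\ m\oplus n
\]
between the generators of a finitely cocomplete structure (the initial point, fat out points, binary coproducts) and the data of a co-metric semi-tropical module (the monoid unit, the scalar action, the monoid addition), and to pass back and forth using the defining formulas
$\dd(\toppoint,z)=0$, $\dd(O(m,r),z)=\dd(m,z)\mminus r$, $\dd(m\sqcup n,z)=\max(\dd(m,z),\dd(n,z))$, together with skeletality. The single device used over and over is that in a skeletal space, if $\dd(p,z)=\dd(q,z)$ for all $z\in X$ then $p=q$ (put $z=q$ and $z=p$ to get $p\isomorphic q$); so a colimit, being pinned down by its distances \emph{to} all points, is unique when it exists, and conversely any point with the correct distances-to-all-points \emph{is} that colimit.

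For the ``only if'' direction, assume $X$ is skeletal and finitely cocomplete. Define $\zerounit:=\toppoint$, $r\odot m:=O(m,r)$ (this exists for every $r\in[0,\infty]$, the one-object shape being finite), and $m\oplus n:=m\sqcup n$; all are well defined by skeletality. Associativity, commutativity and the unit law $\toppoint\sqcup m=m$ follow by comparing distances-to-all-points (e.g.\ $\dd(\toppoint\sqcup m,z)=\max(0,\dd(m,z))=\dd(m,z)$). The module axioms, packaged as ``$r\mapsto r\odot{-}$ is a unital semiring morphism $[0,\infty]\to\End(X,\oplus)$'', are each a one-line distance computation followed by skeletality: $0\odot m=m$ from $\dd(m,z)\mminus 0=\dd(m,z)$; $(r{+}s)\odot m=r\odot(s\odot m)$ from $\dd(m,z)\mminus(r{+}s)=(\dd(m,z)\mminus s)\mminus r$; and $\min(r,s)\odot m=r\odot m\oplus s\odot m$, $r\odot(m\oplus n)=r\odot m\oplus r\odot n$, $r\odot\zerounit=\zerounit$, $\infty\odot m=\zerounit$ from the matching identities for $\max$, $\mminus$, $0$ and $\infty$ in $[0,\infty]$. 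Finally the co-metric compatibility is immediate:
\[
\dd(r\odot m\oplus s\odot n,x)=\max\bigl(\dd(O(m,r),x),\dd(O(n,s),x)\bigr)=\max\bigl(\dd(m,x)\mminus r,\ \dd(n,x)\mminus s\bigr).
\]

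For the ``if'' direction, assume $X$ is skeletal with a co-metric semi-tropical module structure; by Lemma~\ref{Lem:FatOutAndCoproducts} it suffices to exhibit all fat out points and all binary coproducts. Since $r\mapsto r\odot{-}$ is a \emph{unital} semiring morphism it carries the additive unit $\infty$ of $[0,\infty]$ to the zero endomorphism, so $\infty\odot m=\zerounit$ for all $m$; feeding both scalars equal to $\infty$ into the co-metric identity gives $\dd(\zerounit,z)=\max(0,0)=0$ for all $z$, so $\zerounit$ is the initial point. Next, for $m\in X$ and $r\in[0,\infty]$ apply the co-metric identity with $n=\zerounit$, $s=\infty$: because $r\odot m\oplus\infty\odot\zerounit=r\odot m\oplus\zerounit=r\odot m$ we get $\dd(r\odot m,z)=\max(\dd(m,z)\mminus r,\ 0)=\dd(m,z)\mminus r$, so $r\odot m=O(m,r)$. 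And with $r=s=0$, using $0\odot m=m$, $0\odot n=n$, the identity reads $\dd(m\oplus n,z)=\max(\dd(m,z),\dd(n,z))$, so $m\oplus n=m\sqcup n$. Hence $X$ has fat out points and binary coproducts, so $X$ is finitely cocomplete by Lemma~\ref{Lem:FatOutAndCoproducts}.

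I do not expect a genuine obstacle; the content is just the dictionary above plus the repeated upgrade ``equal distances to everything $\Rightarrow$ equal'' afforded by skeletality. The only place demanding care is the units, in particular the element $\infty\in[0,\infty]$: one must use that a unital semiring morphism forces $\infty\odot m=\zerounit$ and $0\odot m=m$, and that $\mminus$ is the \emph{truncated} difference, so $a\mminus\infty=0$ and $\dd(m,z)\mminus r$ is already non-negative; with those conventions fixed, every verification is routine bookkeeping.
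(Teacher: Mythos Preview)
Your proof is correct and follows essentially the same approach as the paper: both directions use the dictionary $\toppoint\leftrightarrow\zerounit$, $O(m,r)\leftrightarrow r\odot m$, $m\sqcup n\leftrightarrow m\oplus n$, invoke Lemma~\ref{Lem:FatOutAndCoproducts} for the ``if'' direction, and use skeletality to upgrade ``same distances to all points'' to equality throughout. The only minor difference is that in the ``if'' direction you extract the separate identities $\dd(r\odot m,z)=\dd(m,z)\mminus r$ and $\dd(m\oplus n,z)=\max(\dd(m,z),\dd(n,z))$ by specializing the combined co-metric formula (using $\infty\odot m=\zerounit$ and $0\odot m=m$), whereas the paper reads them off directly from the definition of $\dd({-},x)$ being a module morphism; this is a cosmetic difference, not a substantive one.
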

\begin{proof}
Suppose that $X$ is a skeletal generalized metric space and $((X,\oplus),\odot)$ is a co-metric semi-tropical module structure.  Then $X$ has all fat out points and all coproducts: for all $r\in[0,\infty]$ and $x\in X$ we have that $r\odot x$ is a fat out point $O(x,r)$ as $\dd(r\odot x,y)=\dd(x,y)\mminus r$; and for all $x,x'\in X$ we have $x\oplus x'$ is the coproduct of $x$ and $x'$ because $\dd(x\oplus x',y)=\max(\dd(x,y),\dd(x',y))$.  From Lemma~\ref{Lem:FatOutAndCoproducts} we deduce that $X$ has all finite colimits, as required.

Conversely, suppose that $X$ has all finite colimits.  As $X$ is skeletal, we know that there is a unique initial point $\toppoint$, and we know that for each pair $x,x'\in X$ there is a \emph{unique} coproduct $x\sqcup x'$.  This makes $X$ into a monoid with $\toppoint$ as the unit.  The associativity of $\sqcup$ is a standard straightforward calculation, as is the unit axiom for $\toppoint$:
  \[\dd(x\sqcup \toppoint, y) =\max(\dd(x,y),\dd(\toppoint,y))=\max(\dd(x,y),0)=\dd(x,y).\]
 Similarly, by cocompleteness and skeletalness, given $r\in [0,\infty]$ and $x\in X$ there is a unique fat out point $O(x,r)$ and this fact is used to define the $[0,\infty]$-action: $r\odot x:=O(x,r)$. 
 
The fact that the gives a module structure is easy to verify.  Similarly, the co-metric structure follows easily.
\end{proof}
This means that we have to some extent achieved the goal of writing a colimit as some sort of sum.  If $X$ is a skeletal, cocomplete generalized metric space then for a finite shape $D$ with weighting $W$ and diagram $J$, we can rigorously write 
  \[
    \colim{W}{J}=\bigoplus_{d\in D}W(d)\odot J(d).
  \]

\section{Spaces of presheaves as universal cocompletions}
In this section we look at the space of presheaves as the universal cocompletion.  This is analogous to the the set of formal sums of elements of a set being the universal linearization of the set.  We start by considering the useful idea of pushing forward a presheaf.  We then look at presheaves as formal colimits before seeing that the presheaves form a universal cocompletion.  This is tersely covered in Kelly's book~\cite{Kelly:EnrichedCategoryTheory}.

\subsection{Pushing weights forward}
Given a short map $G\colon Y\to Z$ between generalized metric spaces we get short maps between the spaces of co-presheaves (or functionals) $\copre{Y}$ and $\copre{Z}$.  On the one hand we get the pull-back $G^*\colon \copre{Z}\to \copre{Y}$.  On the other hand there are push-forward maps $G_{*},G_{!}\colon \copre{Y}\to \copre{Z}$, sometimes called Kan extensions.

Given a short map $G\colon Y\to Z$ we can pull back any functional (or co-presheaf) $V\colon Z\to [0,\infty]$ to a functional  $G^*V\colon Y\to [0,\infty]$, defined by $G^*V(y)=V(G(y))$.  This process gives rise to a short map $G^*\colon \copre Z\to \copre Y$ between spaces of functionals.  There are also two maps in the other direction $G_!,G_*\colon \copre Y\to \copre Z$ which can be called left and right push-forwards or left and right Kan extensions.  These are defined on a function $W\colon Y\to [0,\infty]$ as follows.
\begin{align*}
G_!W(z)&:= \inf_{y\in Y}\left(W(y)+\dd_Z(G(y),z)\right)\\
G_*W(z)&:= \sup_{y\in Y}\left(W(y)\mminus\dd_Z(z,G(y))\right)
\end{align*}
The maps $G_!$ and $G_*$ are respectively left and right adjoint to the pull-back operation $G^*$, in other words we have
 \[\dd_{\copre Z}(G_!W,V)=\dd_{\copre Y}(W,G^*V)
 \quad\text{and}\quad
 \dd_{\copre Y}(G^*V,W)=\dd_{\copre Z}(V,G_*W).
\]
As observed by Lawvere in~\cite{Lawvere:MetricSpaces}, in the case that $G$ is an isometric embedding $Y\subset Z$, then the pull-back $G^{*}V$ is just the restriction of $V$ to $Y$, whereas the push-forwards $G_!W$ and $G_*W$ are extensions of $W$ to the whole of $Z$.  They are respectively the smallest and largest such extensions.  This is easy to see as if $V$ is an extension of $W$ then $W=G^*V$, but
 \[0=\dd(G^*V,G^*V)=\dd(G_!G^*V,V)=\dd(G_!W,V)\]
so $G_!W(z)\ge V(z)$ for all $z\in Z$, which means that $G_!W$ is the biggest extension of $W$.  The result for $G_*W$ is similar.

In the linear tropical notation we would write
  \[G_{!}W(z)=\bigoplus_{y\in Y}  \Bigl( W(y)\odot \dd_Z\bigl(G(y),z\bigr)\Bigr).\]
Here we can think of $\dd_Z$ as being akin to a Dirac delta-function: $\dd_Z(z',z)$ is the unit element if it is no distance from $z'$ to $z$ and it is the zero element if is infinite distance from $z'$ to $z$.  From that, we see that $G_{!}W(z)$ is akin to
  \[\bigoplus_{y\in G^{-1}(z)} W(y),\]
which is what you would expect for the push-forward if $G$ were a map between sets.
\begin{thm}\label{Thm:PushingForwardWeightings}
If $J\colon D\to X$ is a diagram in $X$ and $W\colon D^{\op}\to \Rplus$ is a weighting, then suppose that we have a map $G\colon X\to Y$.  We can push forward the weighting along $J$, so that we have a weighting $J_! W\colon X^{\op}\to \Rplus$ and a diagram $G\colon X\to Y$ in $Y$.  We then find that this gives us the same colimit as if we had composed the original diagram with $G$:
  \[\colim{J_!W}{G}=\colim{W}{J^* G}.\]
\end{thm}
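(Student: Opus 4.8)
The plan is to compute the ``distance from the colimit'' function directly from its defining formula and to show that the two sides of the claimed equality have the same such function; since in a generalized metric space a colimit is characterized up to isomorphism by this function, this simultaneously shows that one colimit exists exactly when the other does and that they agree.

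Recall first that pushing $W$ forward along $J$ produces the weighting
\[
 J_!W(x)=\inf_{d\in D}\bigl(W(d)+\dd(x,J(d))\bigr)\qquad(x\in X),
\]
which one obtains by applying the push-forward construction of this section to $J^{\op}\colon D^{\op}\to X^{\op}$. First I would unwind $\dd\bigl(\colim{J_!W}{G},y\bigr)=\sup_{x\in X}\bigl(\dd(G(x),y)\mminus J_!W(x)\bigr)$ using the elementary rules for the truncated difference, namely $a\mminus\inf_i b_i=\sup_i(a\mminus b_i)$ and $a\mminus(b+c)=(a\mminus b)\mminus c$, together with the freedom to interchange two suprema. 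This rewrites the left-hand side as
\[
 \sup_{d\in D}\Bigl(\bigl[\,\sup_{x\in X}\bigl(\dd(G(x),y)\mminus\dd(x,J(d))\bigr)\bigr]\mminus W(d)\Bigr).
\]

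The crux is then the identity $\sup_{x\in X}\bigl(\dd(G(x),y)\mminus\dd(x,J(d))\bigr)=\dd\bigl(G(J(d)),y\bigr)$. The inequality ``$\ge$'' is trivial, obtained by taking $x=J(d)$; for ``$\le$'' I would combine the shortness of $G$, which gives $\dd(x,J(d))\ge\dd(G(x),G(J(d)))$, with the triangle inequality $\dd(G(x),G(J(d)))+\dd(G(J(d)),y)\ge\dd(G(x),y)$ in $Y$. Feeding this identity back in and pulling the now $x$-independent term out of the remaining supremum (via $(\sup_i a_i)\mminus c=\sup_i(a_i\mminus c)$) turns the left-hand side into $\sup_{d\in D}\bigl(\dd(G(J(d)),y)\mminus W(d)\bigr)$, which is precisely $\dd\bigl(\colim{W}{J^*G},y\bigr)$ for the composite diagram $J^*G=G\circ J$, as required.

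I do not anticipate a real obstacle: all the substantive content sits in that one identity, and it reduces to shortness plus the triangle inequality, with everything else being bookkeeping with $\sup$, $\inf$ and $\mminus$. The only point warranting a moment's care is the degenerate shape $D=\emptyset$, where $J_!W$ is the constant weighting $\infty$ and both sides collapse to the initial point of $Y$; this is already consistent with the running conventions $\inf\emptyset=\infty$, $\sup\emptyset=0$ and $a\mminus\infty=0$, so it needs no separate argument.
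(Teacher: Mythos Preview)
Your argument is correct and is essentially the paper's proof unwound one level of abstraction lower. The paper observes that $\dd_Y(\colim{J_!W}{G},y)=\dd_{\presheaf X}\bigl(J_!W,\dd_Y(G({-}),y)\bigr)$ and then applies the adjunction $\dd_{\presheaf X}(J_!W,V)=\dd_{\presheaf D}(W,J^*V)$, already recorded in the preceding subsection, with $V=\dd_Y(G({-}),y)$; this collapses the calculation to four lines. Your computation does the same thing but verifies that adjunction identity by hand for this particular $V$: the ``crux'' identity $\sup_{x\in X}\bigl(\dd(G(x),y)\mminus\dd(x,J(d))\bigr)=\dd(G(J(d)),y)$ is exactly the content of $J^*V(d)=V(J(d))$ together with the $\sup$/$\inf$ bookkeeping that proves $J_!\dashv J^*$. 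So there is no genuinely different idea here, just a choice between quoting the adjunction and reproving its relevant instance; your version is self-contained, the paper's is shorter.
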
  
\begin{proof}This is a straightforward calculation.
\begin{align*}
\dd_Y(\colim{J_!W}{G},y)&=
\dd_{\presheaf X}\bigl(J_!W({-}),d_Y(G({-}),y)\bigr)\\
&=
\dd_{\presheaf D}\bigl(W({-}),J^*d_Y(G({-}),y)\bigr)\\
&=
\dd_{\presheaf D}\bigl(W({-}),d_Y(G(J({-})),y)\bigr)\\
&=
\dd_{\presheaf D}\bigl(W({-}),d_Y(J^*G({-}),y)\bigr).
\end{align*}
Thus, by the definition of the colimit, $\colim{J_!W}{G}\isomorphic \colim{W}{J^*G}$.  
\end{proof}

In the linear tropical notation the conclusion of the theorem would read as
  \[\bigoplus_{x\in X}  \bigl( J_{!}W(x)\odot G(x)\bigr)
=\bigoplus_{d\in D}  \bigl( W(d)\odot G\bigl(J(d)\bigr)\bigr)\]
which fits in with the intuition of $ J_{!}W(x)$ being akin to  
$\bigoplus_{d\in J^{-1}(x)} W(d)$ as mentioned above.

\subsection{Formal colimits and the space of presheaves}
\label{Section:FormalColimits}
We can show that all colimits, and indeed all potential colimits, in a generalized metric space $X$ are parametrized by the space of presheaves $\presheaf{X}$.  If we have a diagram $J\colon D\to X$ and a weighting $W\colon D^{\op}\to \Rplus$, then, using Theorem~\ref{Thm:PushingForwardWeightings} above, we can push forward the weighting along $J$ and use the identity map $\id\colon X\to X$ as a diagram and see that
 \[\colim{J_!W}{\id}=\colim{W}{J}.\]
This means that any weighted colimit on $X$ is expressible as a colimit on the identity diagram for some weighting in $\presheaf{X}$.  In this way we can think of the space of presheaves $\presheaf X$ as the space of `formal' colimits in $X$.  We say `formal' as the colimits might not exist in $X$, or else two presheaves in $\presheaf{X}$ might give rise to the same colimit in $X$.

There is a potential point for confusion here in that weighted colimits naturally are defined in terms of a \emph{co}presheaf, namely the distance \emph{from} the colimit: 
  \[\dd(\colim{W}{J}, {-}):=\dd_{\presheaf{D}}\bigl(W(?),\dd_{X}(J(?),{-})\bigr).\]
For a formal colimit $f\in \presheaf{X}$ we find
  \[\dd(\colim{f}{\id}, {-}):=\dd_{\presheaf{X}}\bigl(f(?),\dd_{X}(?,{-})\bigr)=L(f)({-}).\]
So the associated co-presheaf is precisely the Isbell conjugate of the presheaf.  

For example, the fat out point at $j\in X$ of radius $w\in [0,\infty]$ is associated to the presheaf $f_{j,w}\in \presheaf{X}$ given by
  \[f_{j,w}(x)=\dd(x,j) + w,\]
and the corresponding co-presheaf on $X$ is
  \[L(f_{j,w})(x)=\dd(j,x)\mminus w.\]
It would seem that $f$ is what the distance \emph{to} the colimit `should' be if it existed in $X$.  We can now see that $\presheaf X$ is itself cocomplete and can be viewed as a universal cocompletion of $X$.

\subsection{Presheaf spaces as universal cocompletions}
The presheaf space $\presheaf {X}$ which we are thinking of as the formal colimits in $X$, can be thought of as being analogous to the set $\tilde A$ of formal linear combinations of elements of a set $A$, say, for concreteness, over a field $k$.  In that case, because $k$ has a linear structure on it, $\tilde A$ can be equipped with a linear structure, which is defined pointwise.  In the case of $\presheaf X$, it has all colimits and these can be calculated pointwise using the fact that the `scalars' $\Rplus$ have all colimits.  We will expand on this analogy below, but note that there is a subtle distinction between the two cases: having a linear structure is, as the name suggests, extra \emph{structure} on a set, whereas having all colimits is a \emph{property} of a generalized metric space.

If $A$ is a set, then the set of formal linear combinations $\tilde A$ can be identified with the set of functions $\Set(A,k)$.  The set $\tilde A$ can be equipped with a linear structure pointwise, using the linear structure in $k$:
  \[(r\cdot f + s\cdot g)(i):=r\cdot f(i)+s\cdot g(i)\qquad \forall r,s\in k;\ f,g\in \tilde A;\ i\in A.
  \tag{$\ddagger$}\label{ddagger}\]
There is an obvious embedding of sets $\Yoneda\colon A\to \tilde{A}$ where an element $i$ gets sent to the delta function at $i$.  
Furthermore, $\tilde{A}$ has the following universal property with respect to linear maps.  If $F\colon A\to B$ is a function between sets, and $B$ is equipped with a linear structure then there is a unique linear map $\tilde{F}\colon \tilde{A}\to B$ such that $F$ factors through it: $F=\tilde{F}\circ \Yoneda$. 
This familiar example might be useful to bear in mind when thinking about the following theorem, which is the generalized metric space version of an enriched category theorem~\cite{Kelly:EnrichedCategoryTheory}.

%
%
%

\begin{thm}
\label{Thm:PresheavesCocomplete}
If $X$ is a generalized metric space then the presheaf space $\presheaf{X}$ is cocomplete and the colimits can be calculated pointwise as follows.  For a weighting $W\colon D^\op\to [0,\infty]$ and a diagram $J\colon D\to \presheaf{X}$, for each $x\in X$ there is a diagram $J_x \colon D\to \Rplus$ defined by $J_x(d):=J(d)(x)$.  The colimit of $W$ and $J$ is then given by
  \[\bigl(\colim{W}{J}\bigr)(x)=\colim{W}{J_x},\]
 where the right-hand side is a colimit in $\Rplus$.
 
Furthermore, if $F\colon X\to Y$ is a short map to a cocomplete generalized metric space $Y$ then the map  is a unique-up-to-isomorphism cocontinuous map $\presheaf{F}\colon \presheaf{X}\to Y$ such that the $F$ factors through $\presheaf{F}$ and the Yoneda embedding:
 \[
  \begin{tikzpicture}[text height=1.5ex,text depth=.25ex, arrowlabel/.style={font=\footnotesize}]
    \node (C) {$X$};
    \node (Chat) [right of = C] {$\presheaf{X}$};
    \node (E) [below of = Chat] {$Y$};
    \draw [->] (C) to node [auto,arrowlabel] {$\Yoneda$} (Chat);
    \draw [->] (Chat) to  node [auto,arrowlabel] {$\presheaf{F}$} (E);
    \draw [->] (C) to node [auto,swap,arrowlabel] {$F$} (E);
  \end{tikzpicture}
  \]
This satisfies $\presheaf{F}(f)\isomorphic \colim{f}{F}$.
\end{thm}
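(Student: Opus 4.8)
The plan is to prove the two halves of the theorem in four steps. First I would show that the pointwise formula defines a presheaf which is genuinely the colimit in $\presheaf X$, which establishes cocompleteness. Next, setting $\presheaf F(f):=\colim f F$ (legitimate since $f$, being a presheaf, is a weighting for the diagram $F\colon X\to Y$, and $Y$ is cocomplete), I would check that $\presheaf F$ is a short map with $\presheaf F\circ\Yoneda\isomorphic F$. Then I would prove $\presheaf F$ is cocontinuous. Finally I would prove uniqueness up to isomorphism. The background facts I would lean on are that $\Rplus$ is cocomplete with $\colim W {J_x}=\inf_{d\in D}(W(d)+J_x(d))$, that $\presheaf X$ is skeletal (so colimits there are literally unique), the arithmetic identities $a\mminus\inf_d b_d=\sup_d(a\mminus b_d)$ and $a\mminus(b+c)=(a\mminus b)\mminus c$, and, crucially, Lemma~\ref{lem:LeftAdjCocont}.

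For cocompleteness: given $W\colon D^\op\to[0,\infty]$ and $J\colon D\to\presheaf X$, put $c(x):=\inf_{d}(W(d)+J(d)(x))$. Since each $J(d)$ is a presheaf, $W(d)+J(d)(x)\le W(d)+J(d)(x')+\dd_X(x,x')$ for every $d$, and infima preserve this, so $c(x)\mminus c(x')\le\dd_X(x,x')$ and $c\in\presheaf X$. To see $c=\colim W J$ I would verify the defining identity $\dd_{\presheaf X}(c,h)=\sup_{d}(\dd_{\presheaf X}(J(d),h)\mminus W(d))$ for all $h\in\presheaf X$: starting from $\sup_x(h(x)\mminus c(x))$, push the truncated difference through the infimum, rewrite $h(x)\mminus(W(d)+J(d)(x))=(h(x)\mminus J(d)(x))\mminus W(d)$, interchange the two suprema, and recognise $\sup_x(h(x)\mminus J(d)(x))=\dd_{\presheaf X}(J(d),h)$. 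This step carries all the index bookkeeping but no real idea.

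Next I would introduce the candidate right adjoint $G\colon Y\to\presheaf X$, $G(y):=\dd_Y(F({-}),y)$; the triangle inequality in $Y$ together with $F$ being short shows both that $G(y)$ is a presheaf and that $G$ is short. The adjunction identity $\dd_Y(\presheaf F(f),y)=\dd_{\presheaf X}(f,G(y))$ holds because both sides unwind to $\sup_{x}(\dd_Y(F(x),y)\mminus f(x))$ — the left by the definition of the colimit $\colim f F$, the right by the definition of $\dd_{\presheaf X}$. From this numerical identity and $G$ short one gets $\presheaf F$ short (apply the triangle inequality to $\dd_{\presheaf X}(f,G(\presheaf F(f')))$ using $\dd_{\presheaf X}(f',G(\presheaf F(f')))=0$), hence $\presheaf F\leftadjoint G$, so Lemma~\ref{lem:LeftAdjCocont} makes $\presheaf F$ cocontinuous. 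For the factorisation, $\dd_Y(\presheaf F(\Yoneda(x)),y)=\sup_{x'}(\dd_Y(F(x'),y)\mminus\dd_X(x',x))$; the bound $\dd_Y(F(x'),y)\le\dd_X(x',x)+\dd_Y(F(x),y)$ makes this $\le\dd_Y(F(x),y)$, while the term $x'=x$ makes it $\ge\dd_Y(F(x),y)$, so $\presheaf F\circ\Yoneda\isomorphic F$.

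Uniqueness rests on the density identity $f\isomorphic\colim f{\Yoneda_X}$, valid in $\presheaf X$ for every $f$: the defining equation for the right-hand colimit is $\dd_{\presheaf X}(f,h)=\sup_x(\dd_{\presheaf X}(\Yoneda(x),h)\mminus f(x))$, and since $\dd_{\presheaf X}(\Yoneda(x),h)=h(x)$ (immediate from the presheaf inequality for $h$) the right side collapses to $\sup_x(h(x)\mminus f(x))=\dd_{\presheaf X}(f,h)$. Then for any cocontinuous $H\colon\presheaf X\to Y$ with $H\circ\Yoneda\isomorphic F$ one gets $H(f)\isomorphic H(\colim f{\Yoneda_X})\isomorphic\colim f{H\circ\Yoneda_X}\isomorphic\colim f F=\presheaf F(f)$, using that short maps preserve isomorphisms, cocontinuity of $H$, and that colimits of pointwise-isomorphic diagrams coincide (the colimit formula sees only distances). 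I do not expect a genuine obstacle anywhere: the step needing the most care is the truncated-difference manipulation in the cocompleteness argument, and the only two ideas are spotting the right adjoint $G$ and the density identity, after which cocontinuity and uniqueness are automatic.
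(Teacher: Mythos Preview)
Your proposal is correct and follows the same approach as the paper. The paper itself gives only a brief sketch, stating that the proof ``follows easily from the definitions'' and noting that the only point meriting comment is that $\presheaf{F}$ is cocontinuous by virtue of having the right adjoint $y\mapsto\dd_Y(F({-}),y)$ via Lemma~\ref{lem:LeftAdjCocont}; you have identified exactly this right adjoint $G$ and supplied the details the paper omits, including the pointwise verification of cocompleteness, the factorisation through the Yoneda map, and the uniqueness argument via the density identity $f\isomorphic\colim{f}{\Yoneda_X}$.
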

Note that in the linear tropical notation the colimit can be written as
 \[
   \bigl(\bigoplus_{d\in D} W(d)\odot J(d)\bigr) (x)
   =
   \bigoplus_{d\in D} W(d)\odot \bigl(J(d)(x)\bigr),
   \]
and this should be thought of as being akin to~\eqref{ddagger}.

The proof of the theorem follows easily from the definitions, given the information in the statement.  The only part of the proof that merits comment here is that one can think, via Lemma~\ref{lem:LeftAdjCocont}, of $\presheaf{F}$ as being cocontinuous by virtue of having a right adjoint $Y\to \presheaf{X}$ given by $y\mapsto \dd_Y\bigl(F({-}),y\bigr)$.

A fundamental point is that the Yoneda embedding is \emph{not}, in general, cocontinuous, so colimits in $X$ do not, in general, get sent to colimits in $\presheaf{X}$, but see below for more on this.   

In this fashion, we can think of the presheaf space as being the ``free cocompletion of $X$''.  Because the enriching category is small, i.e.~because $[0,\infty]$ is a \emph{set} rather just a collection, this can be made into a precise statement (in more general enriching situations there are `size issues').  Let $\GMet$ be the category of generalized metric spaces with short maps and let $\GMetCo$ be the category of cocomplete generalized metric spaces with cocontinuous maps.  Taking the spaces of presheaves gives a functor~ $\presheaf{\cdot}\colon \GMet\to \GMetCo$.  The theorem above implies that this is left adjoint to the forgetful functor $\GMetCo\to \GMet$ which just `forgets' that a cocomplete metric space is cocomplete: for metric spaces $X$ and $Y$
  \[\GMetCo(\presheaf X,Y)\cong \GMet(X,Y).\]

One phrase that category theorists use is ``$\presheaf{X}$ is obtained from $X$ by `freely adjoining all colimits{'}''.  This needs treating carefully, as is  illustrated with the following example.  We can consider a two-point metric space $N_{0,s}$ where the two distances are $0$ and $s>0$.  This is illustrated, along with its embedding in $\presheaf{N_{0,s}}$ in Figure~\ref{Fig:YonedaA0s}.
\begin{figure}
  \[
  \begin{tikzpicture}[scale=1,>=stealth,->,shorten >=2pt, shorten <=2pt,looseness=.5,auto]
    \node [circle,draw=blue,fill=blue,thick, inner sep=0pt,minimum size=1mm ,label=below:$a$](x) at (0,0) {};
  \node [circle,draw=blue,fill=blue,thick, inner sep=0pt,minimum size=1mm ,label=below:$b$](y) at (2,0) {};
  \draw [bend left] (x) to node  {$0$} (y);
  \draw [bend left] (y.west) to node {$s$} (x.east);
  \end{tikzpicture}
  \quad 
  \begin{tikzpicture}[scale=1,baseline=15]
\clip (-1,-1) rectangle (4,4);
\shadedraw[left color=green!30,right color=white, draw=green!50!black, shading angle = 135] (0,0)  -- (5,5) -- (3,5) -- (0,2) -- cycle;
\draw[thin,draw=black,->] (-0.1,0)  -- (4,0) node[below left] {$f(a)$};
\draw[thin,draw=black,->] (0,-0.1) -- (0,4) node[below left] {$f(b)$};
\node [minimum size=0.1cm,inner sep = 0cm] (a) at ( 0,0){};
\draw [draw=blue,fill=blue] (a) circle (0.1cm);
\node [inner sep = 0cm] (b) at ( 0,2){};
\draw [draw=blue,fill=blue] (b) circle (0.1cm);
\node at (0,-0.1) [anchor = north] {$0$};
\node at (-0.1,0) [anchor = east] {$0$};
\node at (0.1,0) [anchor = south west] {$b$};
\node at (-0.1,2) [anchor = east] {$s$};
\node at (0.1,2) [anchor = south west] {$a$};
\end{tikzpicture}
  \]
  \caption{The two-point space $N_{0,s}$ together with its Yoneda image.}
  \label{Fig:YonedaA0s}
\end{figure}
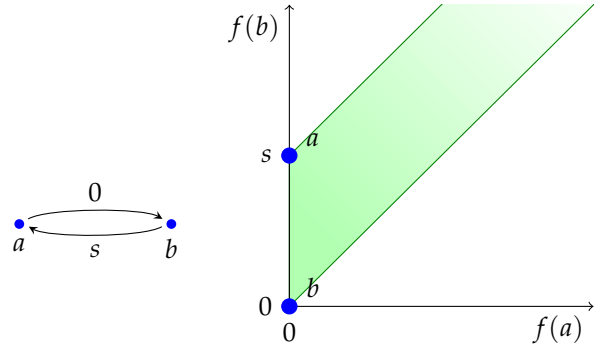
The space $N_{0,s}$ has $a$ as its initial point $\toppoint$, i.e.~as the colimit of the empty diagram.  This colimit is not preserved by the Yoneda embedding, as we have 
  \[\toppoint_{\presheaf{N_{0,s}}}=(\infty,\infty)\not= (0,s)=\Yoneda(\toppoint_{N_{0,s}}),\]
so we can say that this colimit has been freely added.  However, the space $N_{0,s}$ has $b$ as the coproduct $a\sqcup b$, and it is not difficult to calculate that
 \[\Yoneda(a)\sqcup\Yoneda(b)=(0,0)=\Yoneda(b)=\Yoneda(a\sqcup b),\]
so, this colimit has not been `freely added', but rather has been preserved by the Yoneda map.  We will show below that the Isbell completion $I(X)$ is cocomplete and the isometric map $X\to I(X)$  preserves all colimits that exist in $X$.

\section{The Isbell completion is a bicompletion}
In this section we show that for a generalized metric space $X$ the Isbell completion $I(X)$ is both categorically complete and cocomplete, and that the isometry $X\hookrightarrow I(X)$ is both categorically continuous and cocontiuous.  As a corollary we obtain that $I(X)$ has two semi-tropical module structures making it into a metric semi-tropical module and a co-metric semi-tropical module.  This will be illustrated with an example.  We will consider here mainly the colimit versions of these results and just state the corresponding limit versions.  

\subsection{The Isbell completion is cocomplete}
Here we will show that the Isbell completion of any generalized metric space is cocomplete by using the fact that the space of presheaves is cocomplete.

Suppose that $X$ is a generalized metric space, then we have the Isbell adjunction
$L\dashv R$,
and we know by Theorem~\ref{Thm:IsbellAdjunction} that the composite $RL\colon \presheaf X\to \presheaf X$ is idempotent, $RLRL=RL$.  This means that the image of $RL$ is its fixed set $\Fix_{RL}(X)$ and by abuse of notation we can confuse $RL$ with its corestriction to its image $RL\colon \presheaf X\to \Fix_{RL}(X)$.    In the other direction, there is the isometric embedding $\iota\colon \Fix_{RL}(X)\to \presheaf{X}$.  These two maps are adjoint.
\begin{thm}
There is an adjunction $RL\dashv \iota$, so that for all $f\in \presheaf{X}$ and $k\in  \Fix_{RL}(X)$ 
  \[\dd(RL(f),k)=  \dd(f,\iota(k)). \]
\end{thm}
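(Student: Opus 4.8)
The plan is to verify the adjunction inequality directly, using only the definitions of the metrics on $\presheaf X$ and $\Fix_{RL}(X)$, the fact that $RL$ is idempotent with image $\Fix_{RL}(X)$, and the idempotent characterization of adjunctions recorded in Section~\ref{Section:MetricSpaces} — namely that $RL\dashv\iota$ is equivalent to the two unit/counit conditions $\dd(RL\iota(k),k)=0$ for all $k\in\Fix_{RL}(X)$ and $\dd(f,\iota RL(f))=0$ for all $f\in\presheaf X$. Since $\iota$ is just the inclusion, $RL\circ\iota$ is the identity on $\Fix_{RL}(X)$ (because elements of $\Fix_{RL}(X)$ are precisely the fixed points of $RL$), so the first condition is immediate: $\dd(RL\iota(k),k)=\dd(k,k)=0$. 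For the second condition, one must show $\dd_{\presheaf X}(f,\iota RL(f))=0$, i.e.\ $\dd_{\presheaf X}(f,RL(f))=0$; equivalently $RL(f)(x)\le f(x)$ for all $x\in X$, by the explicit description of $\dd_{\presheaf X}$.

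The key step is therefore the pointwise inequality $RL(f)\le f$. I would establish this by unwinding the formulas for $L$ and $R$: writing $g=L(f)$, so $g(y)=\sup_{x}(d(x,y)\mminus f(x))$, we have $RL(f)(x)=R(g)(x)=\sup_{y}(d(x,y)\mminus g(y))$. For each fixed $y$, the defining supremum for $g(y)$ gives $g(y)\ge d(x,y)\mminus f(x)$, hence $d(x,y)\mminus g(y)\le d(x,y)\mminus(d(x,y)\mminus f(x))\le f(x)$ (using the elementary truncated-difference fact $b\mminus(b\mminus a)\le a$). Taking the supremum over $y$ yields $RL(f)(x)\le f(x)$, as needed. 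This is a short computation, essentially the statement that the unit of the Isbell adjunction $L\dashv R$ from Theorem~\ref{Thm:IsbellAdjunction} is distance-decreasing, specialised to the composite $RL$; indeed one could alternatively just cite that $\dd_{\presheaf X}(f,RL(f))=0$ from the equivalent formulation of $L\dashv R$ already proved.

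I don't anticipate a genuine obstacle here — the result is essentially formal. The only mild subtlety is bookkeeping: making sure the corestriction $RL\colon\presheaf X\to\Fix_{RL}(X)$ really is well-defined (which uses idempotence $RLRL=RL$, already in Theorem~\ref{Thm:IsbellAdjunction}) and that $\iota$ is an isometry (already noted in the text), so that the metric appearing on both sides of the claimed identity is the same as the ambient metric on $\presheaf X$. Once the two conditions $\dd(RL\iota(k),k)=0$ and $\dd(f,\iota RL(f))=0$ are in hand, the displayed equality $\dd(RL(f),k)=\dd(f,\iota(k))$ follows from the general equivalence between the two formulations of an adjunction stated in Section~\ref{Section:MetricSpaces}, so I would simply invoke that rather than reprove it.
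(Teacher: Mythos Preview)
Your proposal is correct. The difference from the paper's proof is purely organizational: you verify the unit/counit conditions $\dd(RL\iota(k),k)=0$ and $\dd(f,\iota RL(f))=0$ and then invoke the equivalence of the two formulations of adjunction stated in Section~\ref{Section:MetricSpaces}, whereas the paper proves the hom-set identity $\dd(RL(f),k)=\dd(f,\iota(k))$ directly via a chain of inequalities (using that $RL$ is short for one direction, and the triangle inequality through $\iota RL(f)$ for the other). Both arguments rest on the same key fact $\dd_{\presheaf X}(f,RL(f))=0$; the paper obtains it from the adjunction $L\dashv R$ via $\dd(f,RL(f))=\dd(L(f),L(f))=0$, which is exactly the alternative you mention. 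Your route is slightly more modular (it cleanly separates the two triangle identities), while the paper's is a little more self-contained (it does not need to cite the equivalence). Either is perfectly acceptable here.
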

\begin{proof}
We have
\begin{align*}
 \dd(RL(f),k)&=\dd(RL(f),RL(\iota(k)) &&\text{as $k$ is fixed by $RL$}\\
  &\le \dd(f,\iota(k))  &&\text{as $RL$ is a short map}\\
  &\le \dd(f,\iota RL(f))+\dd(\iota RL(f),\iota(k)) & &\text{by the triangle inequality}\\
  &= \dd(L(f),L(f))+\dd(\iota RL(f),\iota(k)) &&\text{by the adjunction $L\leftadjoint R$}\\
  &=\dd(RL(f),k)  &&\text{as $\iota$ is an isometry.}
 \end{align*}
Thus $\dd(RL(f),k)=  \dd(f,\iota(k))  $ as required.
\end{proof}
As $\iota$ is a right adjoint, $\iota$ is categorically continuous and, similarly, as $RL$ is a left adjoint, $RL$ is cocontinuous.  This means that colimits can be calculated in $\Fix_{RL}(X)$ by evaluating in $\presheaf X$ and projecting to $\Fix_{RL}(X)$. 
\begin{thm}
\label{Thm:FixRLComplete}
If $X$ is a generalized metric space then $\Fix_{RL}(X)$ is cocomplete.   For  $J\colon D\rightarrow\Fix_{RL}(X)$  a diagram and $W\colon D^\op\rightarrow [0,\infty]$ a weighting for it, the colimit is calculated by
   \[\colim{W}{J}=RL \colim{W}{\iota J}.\]
\end{thm}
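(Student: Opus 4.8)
The plan is to deduce the theorem directly from the adjunction $RL\leftadjoint\iota$ established just above, together with the cocompleteness of $\presheaf{X}$ (Theorem~\ref{Thm:PresheavesCocomplete}) and the fact that a left adjoint is cocontinuous (Lemma~\ref{lem:LeftAdjCocont}). The key structural observation is that $RL$, regarded as a map onto its image $\Fix_{RL}(X)$, is a retraction for the inclusion $\iota$: for any $k\in\Fix_{RL}(X)$ one has $\iota(k)=k$ as a presheaf and $RL(k)=k$ by definition of the fixed set, so $RL\circ\iota=\id_{\Fix_{RL}(X)}$ on the nose.

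First I would take a diagram $J\colon D\to\Fix_{RL}(X)$ and a weighting $W\colon D^\op\to[0,\infty]$, and compose with $\iota$ to get the diagram $\iota J\colon D\to\presheaf{X}$. Since $\presheaf{X}$ is cocomplete, the colimit $\colim{W}{\iota J}$ exists in $\presheaf{X}$. Now $RL\colon\presheaf{X}\to\Fix_{RL}(X)$ is cocontinuous (being a left adjoint), so applying it to this colimit produces a colimit in $\Fix_{RL}(X)$ of the pushed-forward diagram $RL\circ\iota J$; by the retraction identity $RL\circ\iota J=J$, whence
\[
  RL\bigl(\colim{W}{\iota J}\bigr)\isomorphic\colim{W}{J}
\]
in $\Fix_{RL}(X)$. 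In particular the colimit of $W$ and $J$ exists in $\Fix_{RL}(X)$; since $D$, $J$, $W$ were arbitrary this shows $\Fix_{RL}(X)$ is cocomplete, and the displayed equation is exactly the asserted formula $\colim{W}{J}=RL\,\colim{W}{\iota J}$.

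The one point deserving care is that the colimit equation for $\colim{W}{J}$ is phrased with distances measured inside $\Fix_{RL}(X)$, whereas $\colim{W}{\iota J}$ is a colimit relative to all of $\presheaf{X}$; translating between the two is precisely what cocontinuity of the corestricted map $RL$ accomplishes, so invoking Lemma~\ref{lem:LeftAdjCocont} is the cleanest route. If one prefers to avoid the lemma, the same computation unwinds by hand: starting from $\dd\bigl(RL(\colim{W}{\iota J}),k\bigr)=\dd\bigl(\colim{W}{\iota J},\iota(k)\bigr)$ via $RL\leftadjoint\iota$, then using that $\colim{W}{\iota J}$ is a colimit in $\presheaf{X}$ to rewrite this as $\dd_{\presheaf{D}}\bigl(W({-}),\dd_{\presheaf X}(\iota J({-}),\iota(k))\bigr)$, and finally using that $\iota$ is an isometry to identify it with $\dd_{\presheaf{D}}\bigl(W({-}),\dd_{\Fix_{RL}(X)}(J({-}),k)\bigr)$. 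Either way, the main obstacle is purely bookkeeping — keeping track of which ambient space each distance is evaluated in — rather than anything substantive.
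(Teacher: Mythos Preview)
Your proof is correct and follows essentially the same approach as the paper: compose with $\iota$ to land in the cocomplete $\presheaf{X}$, use that $RL$ is cocontinuous (as a left adjoint) to push the colimit back to $\Fix_{RL}(X)$, and invoke $RL\circ\iota=\id$ to identify the resulting diagram with $J$. The paper's proof is exactly this, just phrased via a commuting diagram rather than the explicit retraction identity.
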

\begin{proof}
We have the following commuting diagram.
  \[
  \begin{tikzpicture}[text height=1.5ex,text depth=.25ex, arrowlabel/.style={font=\footnotesize}]
    \node (D) at (0,0) {$D$};
    \node(FixRL) at (2,0) {$\Fix_{RL}(X)$};
    \node (Xhat) at (4,0) {$\presheaf{X}$};
    \node (FixRL2) at (6,0) {$\Fix_{RL}(X)$};
    \draw [->] (D) to node [auto,arrowlabel] {$J$} (FixRL);
    \draw [->] (FixRL) to node [auto,arrowlabel] {$\iota$} (Xhat);
    \draw [->] (Xhat) to node[auto,arrowlabel]{$RL$} (FixRL2);
    \draw [->]  (FixRL) to [in=225,out=-45] node [auto,arrowlabel] {$\id$} (FixRL2);
  \end{tikzpicture}
  \]
We know by Theorem~\ref{Thm:PresheavesCocomplete} that $\presheaf X$ is cocomplete so $\colim{W}{\iota J}$ exists in $\presheaf X$ and as $RL$ is cocontinuous, we find
 \[RL \colim{W}{\iota J}=\colim{W}{RL\iota J}= \colim{W}{J},\]
 as required.
\end{proof}
As the Isbell completion is isometric to the fixed set of $RL$, and the presheaf space $\presheaf{X}$ is skeletal, we get the following corollary by  Theorem~\ref{Thm:CocompleteCometricModule}.
\begin{cor}
\label{Cor:IsbellCoMetricModule}
If $X$ is a generalized metric space then its Isbell completion $I(X)$ is cocomplete and hence can be given a co-metric semi-tropical module structure.
\end{cor}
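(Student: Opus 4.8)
The plan is to assemble the corollary from the three ingredients already in place: the cocompleteness of $\Fix_{RL}(X)$, the identification of $I(X)$ with $\Fix_{RL}(X)$, and the characterization of finitely cocomplete skeletal spaces via co-metric semi-tropical module structures. First I would invoke Theorem~\ref{Thm:FixRLComplete}, which says that $\Fix_{RL}(X)$ is cocomplete; in particular it is finitely cocomplete. Since $I(X)$ is isometric to $\Fix_{RL}(X)$ (as recorded just after Lemma~\ref{Lemma:EasyMetric}, where the inclusion/projection maps are seen to be isometric bijections), and since the existence of weighted colimits is an invariant of the isometry type of a generalized metric space, it follows that $I(X)$ is finitely cocomplete too.

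Next I would verify that $I(X)$ is skeletal. As observed in Section~\ref{Section:MetricSpaces}, the presheaf space $\presheaf{X}$ is skeletal: $f\isomorphic g$ in $\presheaf{X}$ forces $f(x)=g(x)$ for all $x$, hence $f=g$. Since $\Fix_{RL}(X)$ carries the generalized metric induced from $\presheaf{X}$, it is a submetric space of a skeletal space and is therefore itself skeletal; consequently $I(X)\isometric \Fix_{RL}(X)$ is skeletal.

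Having established that $I(X)$ is both skeletal and finitely cocomplete, I would apply Theorem~\ref{Thm:CocompleteCometricModule}, which provides, for any skeletal finitely cocomplete generalized metric space, the structure of a co-metric semi-tropical module; applied to $I(X)$ this gives the claim. I do not expect a genuine obstacle here, since the argument is pure bookkeeping; the only points that warrant a moment's care are noticing that ``cocomplete'' is stronger than the ``finitely cocomplete'' hypothesis of Theorem~\ref{Thm:CocompleteCometricModule}, and that skeletality is inherited both by submetric spaces and across isometries, so that the theorem genuinely applies to $I(X)$.
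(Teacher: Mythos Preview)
Your proposal is correct and follows essentially the same route as the paper: invoke Theorem~\ref{Thm:FixRLComplete} for cocompleteness of $\Fix_{RL}(X)\cong I(X)$, note skeletality via the inclusion into the skeletal space $\presheaf{X}$, and then apply Theorem~\ref{Thm:CocompleteCometricModule}. Your write-up is in fact slightly more careful than the paper's one-line justification, explicitly flagging the passage from cocomplete to finitely cocomplete and the inheritance of skeletality under submetric spaces and isometries.
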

We will see that a dual version of this holds, but first we can see the action in action.

\subsection{An example of the co-metric semi-tropical action}
We should now look at the simple example of the asymmetric two-point space $N_{r,s}$.  We know by Section~
\ref{section:IsbellCompletionAsymmetricTwoPoints} that the Isbell completion is just the rectangle  
  \[
   I(N_{r,s})\cong \Fix_{RL}(N_{r,s})=
   \Bigl\{(\alpha,\beta) \bigm | 0\le \alpha \le r,\ 0\le \beta \le s\Bigr\}
   \subset \widehat{N_{r,s}} \subset \R^2_{\text{asym}}.
  \]
We also can easily calculate $RL\colon \widehat{N_{r,s}}\to \Fix_{RL}(N_{r,s})$:
  \[RL\bigl((\alpha,\beta)\bigr)=(\min\{\alpha,r\},\min\{\beta,s\}).\]
The semi-module structure on $\widehat{N_{r,s}}$ 
is simply the following:
\begin{align*}
   \tau\odot(\alpha,\beta)&:= (\tau+\alpha,\tau+\beta);\\
     (\alpha,\beta)\oplus(\alpha',\beta')&:=(\min\{\alpha,\alpha'\},\min\{\beta,\beta'\}).
\end{align*}
Thus the semi-module structure on 
$\Fix_{RL}(N_{r,s})$
 is given by
  \begin{align*}
    \tau\odot(\alpha,\beta)&:= (\min\{\tau+\alpha,r\},\min\{\tau+\beta,s\});\\
     (\alpha,\beta)\oplus(\alpha',\beta')&:=(\min\{\alpha,\alpha'\},\min\{\beta,\beta'\}).
   \end{align*}
 This is pictured in Figure~\ref{Fig:semitropicalArs}.
\begin{figure}[ht]
\begin{center}
\begin{tikzpicture}[scale=1.5]
\draw[thin,draw=black,->] (-0.1,0)  -- (4,0) node[below] {$f(a)$};
\draw[thin,draw=black,->] (0,-0.1) -- (0,3) node[left] {$f(b)$};
\draw[fill=green!80,fill opacity=0.5,thin] (0,0) rectangle (3,2);
\node [circle,draw=blue,fill=blue,thick, inner sep=0pt,minimum size=1mm ,label=above right:$b$](b) at (3,0) {};
\node [circle,draw=blue,fill=blue,thick, inner sep=0pt,minimum size=1mm ,label=above right:$a$](a) at (0,2) {};
\node [outer sep=0pt,inner sep=0pt] (p) at (1,1.2) {};
\node (q) at (2,0.5) {};
\node (pq) at (1,0.5) {};
\node[label=below:$r$] at (b) {};
\node[label=left:$s$] at (a) {};
\node[label=left:$0$] at (0,0) {};
\node[label=below:$0$] at (0,0) {};
\draw[postaction={nomorepostaction,decorate,
                    decoration={markings,mark=at position 0.5 with {\arrow{>}}}
                   },
                   draw=black, very thick] (p) -- +(0.8,0.8);
\draw[postaction={nomorepostaction,decorate,
                    decoration={markings,mark=at position 0.5 with {\arrow{>}}}
                   },
                   draw=black,  very thick] (p)+(0.8,0.8) -- (3,2);
\node [circle,draw=blue,fill=blue,thick, inner sep=0pt,
minimum size=1mm, 
 ,label=above:{$p$}] at (p) {};
\node [circle,draw=blue,fill=blue,thick, inner sep=0pt,minimum size=1mm ,label=below:{$q$}] at (q) {};
\node [circle,draw=blue,fill=blue,thick, inner sep=0pt,minimum size=1mm ,label=below left:{$p \oplus q$}] at (pq) {};
\node (tauodotp) at (2,2.8) {$\tau\odot p$} ;
\draw[>=latex,->](tauodotp) -- (2.1,2.05);
\end{tikzpicture}
\end{center}
\caption{The co-metric semi-tropical module structure on $\Fix_{RL}(N_{r,s})$}
\label{Fig:semitropicalArs}
\end{figure}
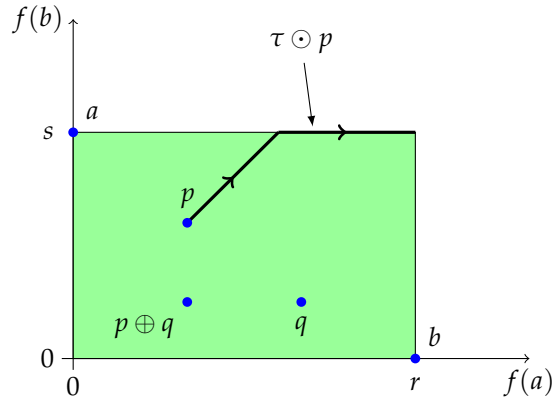

\subsection{The dual results (briefly)}
All of the results about colimits, cocontinuity and cocompletion have analogues in the realm of limits, continuity and completions.  We will present them briefly here.

In the context of weighted limits, as for weighted colimits, 
a \definition{shape} $D$ is a generalized metric space and  a \definition{diagram} of shape $D$ is a short map $J\colon D\to X$.  However, in this context, a \definition{weighting} is a \emph{co-presheaf} $W\colon D\to [0,\infty]$ rather than a presheaf on $W$.  A \definition{limit} of the diagram $J$ with weighting $W$ is an object $l$ of $X$ which satisfies 
  \[
   \dd_X( x,l ) = \sup_{d\in D} \bigl\{\dd_X(x,J(d))\mminus W(d)\bigr\}
   \qquad\text{for all }x\in X.
   \]
   We write $l\isomorphic \lim{W}{J}$.
A short map is \definition{categorically continuous} (or just \definition{continuous}) if $F\lim{W}{J}\isomorphic \lim{W}{FJ}$ for all $W$ and $J$ for which the limit exists.  A generalized metric space is (categorically) \definition{complete} if $\lim{W}{J}$ exists for all $W$ and $J$.

The space of op-co-presheaves $\copre{X}^\op$ is complete and is the free completion in the sense that 
if $F\colon X\to Y$ is a short map to a complete generalized metric space $Y$ then there is a unique-up-to-isomorphism continuous map $\copre{F}\colon \copre{X}^\op\to Y$ such that the $F$ factors through $\copre{F}$ and the co-Yoneda embedding $\Yoneda\colon X\to \copre{X}^\op$; so the following diagram commutes.
 \[
  \begin{tikzpicture}[text height=1.5ex,text depth=.25ex, arrowlabel/.style={font=\footnotesize}]
    \node (C) {$X$};
    \node (Chat) [right of = C] {$\copre{X}^\op$};
    \node (E) [below of = Chat] {$Y$};
    \draw [->] (C) to node [auto,arrowlabel] {$\Yoneda$} (Chat);
    \draw [->] (Chat) to  node [auto,arrowlabel] {$\copre{F}$} (E);
    \draw [->] (C) to node [auto,swap,arrowlabel] {$F$} (E);
  \end{tikzpicture}
  \]
Suppose that $X$ is a generalized metric space then it has a \definition{metric semi-tropical module structure} if it has a commutative monoid structure $\boxplus$ and has a semi-tropical module structure $\boxdot\colon [0,\infty]\times X\to X$ such that for any $x\in X$  we get a morphism of semi-tropical modules
  \[\dd(x,{-})\colon (X,\boxplus) \to ([0,\infty],\max).\]
Analogous to Theorem~\ref{Thm:CocompleteCometricModule}
\begin{thm}
\label{Thm:CompleteMetricModule}
A skeletal generalized metric space is finitely complete if and only if it can be given a metric semi-tropical structure.
\end{thm}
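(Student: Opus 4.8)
The plan is to deduce this from Theorem~\ref{Thm:CocompleteCometricModule} by passing to the opposite generalized metric space, since everything in sight dualizes cleanly under $(-)^\op$. First I would record three elementary dualities. (i) A generalized metric space $X$ is skeletal if and only if $X^\op$ is: the relation $x\isomorphic x'$ is symmetric in the two arguments of $\dd$, so it is unchanged on the opposite space. (ii) $X$ is finitely complete if and only if $X^\op$ is finitely cocomplete. (iii) $X$ admits a metric semi-tropical module structure if and only if $X^\op$ admits a co-metric semi-tropical module structure.

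For (ii), given a diagram $J\colon D\to X$ and a weighting (co-presheaf) $W\colon D\to[0,\infty]$ for a limit, the same underlying function $J$ is a diagram $J\colon D^\op\to X^\op$ and $W$, now read as a presheaf on $D^\op$, is a weighting for a colimit; and an object $l$ satisfies the limit equation $\dd_X(x,l)=\sup_{d\in D}\bigl(\dd_X(x,J(d))\mminus W(d)\bigr)$ for all $x$ precisely when it satisfies the colimit equation $\dd_{X^\op}(l,x)=\sup_{d\in D}\bigl(\dd_{X^\op}(J(d),x)\mminus W(d)\bigr)$ for all $x$. Finite shapes in $D$ correspond to finite shapes in $D^\op$, which yields the equivalence. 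For (iii), suppose $(X,\boxplus,\boxdot)$ is a metric semi-tropical module. The commutative monoid $(X,\boxplus)$ is unchanged on the opposite space, and for each $x\in X$,
\[
\dd_{X^\op}(r\boxdot m\boxplus s\boxdot n,x)
=\dd_X(x,r\boxdot m\boxplus s\boxdot n)
=\max\bigl(\dd_X(x,m)\mminus r,\dd_X(x,n)\mminus s\bigr)
=\max\bigl(\dd_{X^\op}(m,x)\mminus r,\dd_{X^\op}(n,x)\mminus s\bigr),
\]
so $(\boxplus,\boxdot)$ is a co-metric semi-tropical module structure on $X^\op$; the converse is the same statement applied to $(X^\op)^\op=X$.

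Combining these: $X$ is skeletal and finitely complete $\iff$ $X^\op$ is skeletal and finitely cocomplete $\iff$ (Theorem~\ref{Thm:CocompleteCometricModule}) $X^\op$ carries a co-metric semi-tropical module structure $\iff$ $X$ carries a metric semi-tropical module structure. The only place requiring a little care is (ii), namely keeping straight which of the two functions involved is a presheaf and which a co-presheaf when transporting weightings across $(-)^\op$; the rest is a mechanical unwinding. Alternatively one can dualize the whole of the colimit development --- introducing ``fat in points'', binary categorical products, and the limit analogue of Lemma~\ref{Lem:FatOutAndCoproducts} --- and then repeat the proof of Theorem~\ref{Thm:CocompleteCometricModule} essentially verbatim; the opposite-space argument is simply a way of organizing this so that nothing has to be rewritten.
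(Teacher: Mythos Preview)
Your proposal is correct and matches the paper's approach: the paper does not give a separate proof but simply presents the theorem as ``analogous to Theorem~\ref{Thm:CocompleteCometricModule}'', i.e.\ as its dual, and your argument via passage to $X^\op$ is exactly the clean way to make that duality precise. The alternative you mention --- re-running the proof with fat in points and products --- is also what the paper's phrasing implicitly licenses.
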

We can now look at the Isbell completion, this time concentrating on the fixed set $\Fix_{LR}(X)\subset \copre{X}^\op$, and seeing that it is complete.  This is the analogue of Theorem~\ref{Thm:FixRLComplete}.
\begin{thm}
\label{Thm:FixLRComplete}
If $X$ is a generalized metric space then $\Fix_{LR}(X)$ is complete.   For  $J\colon D\rightarrow\Fix_{LR}(X)$  a diagram and $W\colon D\rightarrow [0,\infty]$ a weighting for it, the limit is calculated by
   \[\lim{W}{J}=RL \lim{W}{\iota J}.\]
\end{thm}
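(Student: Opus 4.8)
The plan is to dualize the proof of Theorem~\ref{Thm:FixRLComplete}, with $\copre{X}^\op$ in the role of $\presheaf{X}$ and $LR$ in the role of $RL$, so almost nothing new has to be computed: one just has to transport the three ingredients of the cocompleteness argument across the duality.

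First I would establish the dual of the adjunction $RL\dashv\iota$ used above. Write $\iota\colon\Fix_{LR}(X)\to\copre{X}^\op$ for the isometric inclusion and, abusing notation, $LR\colon\copre{X}^\op\to\Fix_{LR}(X)$ for the corestriction of the idempotent $LR$ to its image. I claim there is an adjunction $\iota\dashv LR$, i.e.
\[
\dd(\iota(k),g)=\dd(k,LR(g))\qquad\text{for all }k\in\Fix_{LR}(X),\ g\in\copre{X}^\op.
\]
This follows from the same string of (in)equalities as before: $\dd(k,LR(g))=\dd(LR\iota(k),LR(g))$ since $k$ is fixed by $LR$; this is $\le\dd(\iota(k),g)$ since $LR$ is short; which is $\le\dd(\iota(k),\iota LR(g))+\dd(\iota LR(g),g)$ by the triangle inequality; the second summand is $\dd_{\copre{X}^\op}(LR(g),g)=0$, which is part of the content of the Isbell adjunction $L\dashv R$ (Theorem~\ref{Thm:IsbellAdjunction}); and the first summand equals $\dd(k,LR(g))$ because $\iota$ is an isometry. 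Hence all of these are in fact equalities.

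Next, $\iota\dashv LR$ makes $LR$ a right adjoint, hence categorically continuous by the dual of Lemma~\ref{lem:LeftAdjCocont}; and $\copre{X}^\op$ is complete, being the free completion of $X$ (the dual of Theorem~\ref{Thm:PresheavesCocomplete}, as recorded among the dual results). Given a diagram $J\colon D\to\Fix_{LR}(X)$ and a weighting $W\colon D\to[0,\infty]$ --- a co-presheaf, as is appropriate in the limit context --- the limit $\lim{W}{\iota J}$ exists in $\copre{X}^\op$, and since $LR$ is continuous and $LR\iota J=J$,
\[
LR\lim{W}{\iota J}=\lim{W}{LR\iota J}=\lim{W}{J}.
\]
Thus the limit exists in $\Fix_{LR}(X)$ and is computed as claimed; as $W$ and $J$ were arbitrary, $\Fix_{LR}(X)$ is complete.

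No real obstacle arises, since the whole argument is a mechanical dualization; the only points requiring care are the variance bookkeeping --- that here $\iota$ is the \emph{left} adjoint (whereas in Theorem~\ref{Thm:FixRLComplete} it was the right adjoint), that a weighting for a limit is a co-presheaf $D\to[0,\infty]$ rather than a presheaf, and that it is $LR$ (not $RL$) that retracts $\copre{X}^\op$ onto $\Fix_{LR}(X)$.
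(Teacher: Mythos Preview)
Your proposal is correct and is exactly the mechanical dualization the paper intends: the paper gives no proof for this theorem beyond noting it is the analogue of Theorem~\ref{Thm:FixRLComplete}, and you have carried out that analogue in full, including the dual adjunction $\iota\dashv LR$ and the continuity of $LR$ as a right adjoint. You have also, correctly, used $LR$ rather than $RL$ in the displayed formula; the ``$RL$'' in the statement appears to be a typo, since $\iota J$ lands in $\copre{X}^\op$ and it is $LR$, not $RL$, that retracts $\copre{X}^\op$ onto $\Fix_{LR}(X)$.
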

Combining this with Theorem~\ref{Thm:CompleteMetricModule} we obtain the following.
\begin{cor}
\label{Cor:IsbellMetricModule}
For $X$ a generalized metric space, the Isbell completion $I(X)$ is categorically complete and hence can be equipped with a metric semi-tropical module structure.
\end{cor}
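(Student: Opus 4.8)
The plan is to obtain this as an immediate consequence of Theorem~\ref{Thm:FixLRComplete} and Theorem~\ref{Thm:CompleteMetricModule}, in exact parallel with the way Corollary~\ref{Cor:IsbellCoMetricModule} was deduced from Theorem~\ref{Thm:FixRLComplete} and Theorem~\ref{Thm:CocompleteCometricModule}. First I would recall that the Isbell completion is isometric to $\Fix_{LR}(X)$, which sits as an isometrically embedded subspace of the space of op-co-presheaves $\copre{X}^\op$. Since $\copre{X}^\op$ is skeletal --- this was observed in Section~\ref{Section:MetricSpaces} --- every subspace of it is skeletal, so $\Fix_{LR}(X)$, and hence $I(X)\isometric\Fix_{LR}(X)$, is skeletal.

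Next, Theorem~\ref{Thm:FixLRComplete} asserts that $\Fix_{LR}(X)$ is categorically complete; in particular it is finitely complete. Thus $I(X)$ is a skeletal, finitely complete generalized metric space, and Theorem~\ref{Thm:CompleteMetricModule} applies to endow it with a metric semi-tropical module structure $(\boxplus,\boxdot)$. This establishes both assertions of the corollary.

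Although it is not needed for the statement, I would then spell out how these operations are computed, in order to justify the description given in the introduction. By the proof of Theorem~\ref{Thm:CompleteMetricModule}, $p\boxplus q$ is the binary product of $p$ and $q$ in $I(X)$ and $\tau\boxdot p$ is the ``fat in point'' of radius $\tau$ at $p$ (the limit analogues of a coproduct and of a fat out point). By Theorem~\ref{Thm:FixLRComplete} these limits are obtained by including $p$ and $q$ into $\copre{X}^\op$ along $\iota^2$, forming the corresponding limit there --- which, by the dual of Theorem~\ref{Thm:PresheavesCocomplete}, is computed pointwise using the (dual) semi-tropical structure on $\copre{X}^\op$ --- and then applying the retraction $LR$ back onto $I(X)$. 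This is the metric counterpart of the formulae $p\oplus q = RL(\iota^1 p\uplus\iota^1 q)$ and $\tau\odot p = RL(\tau\star\iota^1 p)$ recorded earlier.

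The argument is routine once Theorem~\ref{Thm:FixLRComplete} and Theorem~\ref{Thm:CompleteMetricModule} are in hand; the only point that requires a moment's attention --- and hence the nearest thing to an obstacle --- is confirming that $\Fix_{LR}(X)$ inherits skeletality from $\copre{X}^\op$, since this is precisely the hypothesis that licenses the appeal to Theorem~\ref{Thm:CompleteMetricModule}.
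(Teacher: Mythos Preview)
Your proposal is correct and follows exactly the paper's approach: the paper derives the corollary in one sentence by combining Theorem~\ref{Thm:FixLRComplete} (completeness of $\Fix_{LR}(X)\isometric I(X)$) with Theorem~\ref{Thm:CompleteMetricModule}, using skeletality inherited from $\copre{X}^\op$, precisely as you do and in direct parallel with Corollary~\ref{Cor:IsbellCoMetricModule}. Your additional remarks on how $\boxplus$ and $\boxdot$ are computed via $\iota^2$ and $LR$ are not in the paper's proof itself but accurately reflect the surrounding discussion.
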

The metric semi-tropical action comes from the action of the semi-tropical semi-ring on $\copre{X}^\op$.  As can be seen from Figures~\ref{Fig:AdIsbellCompletion} and~\ref{Fig:ArsIsbellCompletion} the Isbell completion sits ``the other way round" in $\copre{X}$ from how it does in $\presheaf{X}$ so the metric action goes in ``the other'' direction from how the co-metric action.  This is illustrated in Figure~\ref{Fig:BothModuleStructures}.

\subsection{The Isbell completion is a bicompletion}
We have shown that the Isbell completion $I(X)$ of a generalized metric space $X$ is both complete and cocomplete.  We now show that the canonical isometry $X\to I(X)$ is both continuous and cocontinuous.
%
%
\begin{thm}
\label{Thm:YonedaCompleteCocomplete}
The isometry $X\to I(X)$ of a generalized metric space into its Isbell completion is continuous and cocontinuous.
\end{thm}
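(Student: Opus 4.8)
The plan is to verify the two defining (co)limit equations directly, using the description of distances to and from Yoneda points in $I(X)$ together with the fact that every point $(f,g)\in I(X)$ satisfies $f=R(g)$ and $g=L(f)$. First I would treat cocontinuity. Let $J\colon D\to X$ be a diagram and $W\colon D^\op\to[0,\infty]$ a weighting for which the colimit $c=\colim{W}{J}$ exists in $X$. Since $I(X)\cong\Fix_{RL}(X)$ is cocomplete by Theorem~\ref{Thm:FixRLComplete} and is skeletal (being a subspace of the skeletal space $\presheaf{X}$), it suffices to check that $\Yoneda(c)$ satisfies the colimit equation for $W$ and $\Yoneda\circ J$ in $I(X)$: for every $P=(f,g)\in I(X)$,
\[
  \dd_{I(X)}\bigl(\Yoneda(c),P\bigr)=\sup_{d\in D}\Bigl(\dd_{I(X)}\bigl(\Yoneda(J(d)),P\bigr)\mminus W(d)\Bigr).
\]
Since composing $X\to I(X)$ with the inclusion $I(X)\hookrightarrow\presheaf{X}$ is the ordinary Yoneda embedding, the extension theorem gives $\dd_{I(X)}(\Yoneda(x),P)=f(x)$, so this equation reads $f(c)=\sup_{d}\bigl(f(J(d))\mminus W(d)\bigr)$.

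To prove that identity I would use $f=R(g)$ to write $f(c)=\sup_{y\in X}\bigl(\dd_X(c,y)\mminus g(y)\bigr)$, substitute the colimit identity $\dd_X(c,y)=\sup_{d}\bigl(\dd_X(J(d),y)\mminus W(d)\bigr)$, and rearrange the iterated suprema. The rearrangement is legitimate because $x\mapsto x\mminus b$ is monotone and continuous, hence commutes with suprema, and because $(a\mminus b)\mminus c=a\mminus(b+c)$; these collapse the double supremum to
\[
  \sup_{d}\Bigl(\sup_{y}\bigl(\dd_X(J(d),y)\mminus g(y)\bigr)\mminus W(d)\Bigr)=\sup_{d}\bigl(R(g)(J(d))\mminus W(d)\bigr)=\sup_{d}\bigl(f(J(d))\mminus W(d)\bigr),
\]
as required; hence $\Yoneda(c)\isomorphic\colim{W}{\Yoneda\circ J}$, and by skeletality they are equal.

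Continuity is the formal dual: for $J\colon D\to X$ and a weighting $W\colon D\to[0,\infty]$ with $l=\lim{W}{J}$ existing in $X$, I would show $\Yoneda(l)$ satisfies the limit equation in $I(X)$ (which is complete by Theorem~\ref{Thm:FixLRComplete} and skeletal), i.e. $g(l)=\sup_{d}\bigl(g(J(d))\mminus W(d)\bigr)$ for every $(f,g)\in I(X)$, using the companion extension formula $\dd_{I(X)}(P,\Yoneda(x))=g(x)$. The argument is identical with the roles exchanged: now $g=L(f)$, so $g(l)=\sup_{x}\bigl(\dd_X(x,l)\mminus f(x)\bigr)$, one substitutes $\dd_X(x,l)=\sup_{d}\bigl(\dd_X(x,J(d))\mminus W(d)\bigr)$ and rearranges to $\sup_{d}\bigl(L(f)(J(d))\mminus W(d)\bigr)$. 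I do not expect any real obstacle; the only delicate point is the bookkeeping with the truncated difference that justifies swapping the suprema (the identities $\sup_d(a_d\mminus b)=(\sup_d a_d)\mminus b$ and $(a\mminus b)\mminus c=a\mminus(b+c)$), together with the remark that the Yoneda map into $I(X)$ agrees, after the canonical inclusions, with the Yoneda maps into $\presheaf{X}$ and $\copre{X}^\op$, so the extension formulas for distances to and from Yoneda points apply verbatim.
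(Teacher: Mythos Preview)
Your proof is correct, and it takes a genuinely different route from the paper's. The paper argues abstractly: it factors the map $\iota_1\colon X\to\Fix_{RL}(X)$ through the Yoneda embedding $\Yoneda\colon X\to\presheaf{X}$ via the inclusion $\iota\colon\Fix_{RL}(X)\hookrightarrow\presheaf{X}$, invokes the standard enriched-category fact that $\Yoneda$ is continuous, uses that $\iota$ is continuous because it is a right adjoint, and then cancels $\iota$ on both sides because it is an isometric embedding. Your argument instead verifies the (co)limit equation by hand, unpacking $f=R(g)$ (resp.\ $g=L(f)$) and manipulating the iterated suprema with the identities $(\sup_i a_i)\mminus b=\sup_i(a_i\mminus b)$ and $(a\mminus b)\mminus c=a\mminus(b+c)$. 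This is essentially the metric-space incarnation of the proof that the Yoneda embedding preserves limits, specialized to presheaves of the form $R(g)$. The paper's approach is cleaner if one is happy to import the continuity of $\Yoneda$ from \cite{Kelly:EnrichedCategoryTheory}; yours is more self-contained and makes the mechanism visible without appealing to that background fact.
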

\begin{proof}
We will show that the map is continuous; the proof of cocontinuity is similar.  As $I(X)$ is canonically isometric to the fixed set $\Fix_{RL}(X)$ it suffices to proof that the isometry  $\iota_1\colon X \to \Fix_{RL}(X)$ is continuous.  Consider the following commutative diagram.
  \[
  \begin{tikzpicture}[text height=1.5ex,text depth=.25ex, arrowlabel/.style={font=\footnotesize}]
    \node (X) at (0,0) {$X$};
    \node (Xhat) at (2,1) {$\presheaf{X}$};
    \node (FixRL) [below of = Xhat] {$\Fix_{RL}(X)$};
    \draw [->] (X) to node [auto,arrowlabel] {$\Yoneda$} (Xhat);
    \draw [->] (X) to node [auto,arrowlabel] {$\iota_1$} (FixRL);
    \draw [->]  (FixRL) to node [auto,arrowlabel] {$\iota$} (Xhat);
  \end{tikzpicture}
  \]
We know that the Yoneda embedding $\Yoneda$ is continuous, and we know that $\iota$ is continuous by virtue of it being a right adjoint.  Suppose that for $X$ we have a diagram $J$ and a weight $W$ such that the limit $\lim{W}{J}$ exists then  
\begin{align*}
\iota\iota_1\lim{W}{J}&=\Yoneda\lim{W}{J}=\lim{W}{\Yoneda J}
=\lim{W}{\iota\iota_1 J}=\iota\lim{W}{\iota_1 J}.
\end{align*}
So as $\iota$ is an embedding, we have $\iota_1\lim{W}{J}=\lim{W}{\iota_1 J}$ and hence $\iota_1\colon X \to \Fix_{RL}(X)$ is continuous, as required.
\end{proof}

%


\begin{thebibliography}{99}
%
\bibitem{BartlettLeinster:nCafe} B.~Bartlett, T.~Leinster and A.~Stacey, comments on the $n$-Category Caf\'e,
\url{http://golem.ph.utexas.edu/category/2011/03/higher_gauge_theory_division_a.html}
%
\bibitem{Borceux:Handbook2} F.~Borceux,
  \textit{Handbook of Categorical Algebra 2: Categories and Structures},
  Encyclopedia of Mathematics and its Applications \textbf{51}, 
  Cambridge University Press, 1994.
%
\bibitem{ChrobakLarmore} M.~Chrobak and L.~L.~Larmore, 
\textit{Generosity helps or an $11$-competitive algorithm for three
servers}, 
Journal of Algorithms \textbf{16} (1994), 234--263.
%
\bibitem{DevelinSturmfels:Tropical} M.~Develin and B.~Sturmfels,
\textit{Tropical convexity},
Documenta Mathematica \textbf{9} (2004) 1--27 (electronic).
%
\bibitem{DevelinSturmfelsErrata}  M. Develin and B. Sturmfels,
  \textit{Erratum for \cite{DevelinSturmfels:Tropical}}, 
  Documenta Mathematica \textbf{9} (2004) 205--206 (electronic).
%
\bibitem{Dress:TreesTightExtensions}    A.~W.~M.~Dress,    
\textit{Trees, tight extensions of metric spaces, and the cohomological dimension of certain groups: A note on combinatorial properties of metric spaces},
Advances in Mathematics
\textbf{53} (1984) 321--402.
%
\bibitem{DressEtAl:TTheoryOverview}
  A.~Dress, V.~Moulton and W.~Terhalle,
  \textit{T-theory: An overview},
   European Journal of Combinatorics \textbf{17} (1996) 161--175.
%
\bibitem{Epstein:HyperconvexityTalk} D.~Eppstein,
  \textit{Hyperconvexity and Metric Embedding},
  invited talk at Fifth William Rowan Hamilton Geometry and Topology Workshop, Dublin, Ireland, 2009.
  \url{http://www.ics.uci.edu/~eppstein/pubs/all.html}
%
\bibitem{HiraiKoichi:DirectedDistances} H.~Hirai and S.~Koichi,
 \textit{On tight spans and tropical polytopes for directed distances}, 
 arXiv preprint. \url{http://arxiv.org/abs/1004.0415v1}
%
\bibitem{Isbell:SixTheoremsInjective} J.~R.~Isbell,
  \textit{Six theorems about injective metric spaces},
  Commentarii Mathematici Helvetici \textbf{39} (1964) 65--76.
%
\bibitem{Karzanov:Minimum0Extensions}  A.~V.~Karzanov,
\textit{Minimum $0$-extensions of graph metrics},
European Journal of Combinatorics \textbf{19} (1998) 71--101.
%
\bibitem{Kelly:EnrichedCategoryTheory}
  G.~M.~Kelly,
  \textit{Basic concepts of enriched category theory},
  Reprints in Theory and Applications of Categories \textbf{10} (2005)  pp.~vi+137.
%
\bibitem{Lawvere:MetricSpaces} F.~W.~Lawvere,
  \textit{Metric spaces, generalized logic, and closed categories},
  Reprints in Theory and Applications of Categories \textbf{1} (2002) 1--37.
%
\bibitem{Lawvere:TakingCategoriesSeriously} F.~W.~Lawvere,
  \textit{Taking categories seriously},
  Reprints in Theory and Applications of Categories \textbf{8} (2005) 1--24.
%
\bibitem{Leinster:Magnitude} T.~Leinster,
  \textit{The magnitude of metric spaces},
  arXiv preprint. \url{http://arxiv.org/abs/1012.5857v2}
%
\bibitem{Pavlovic:QuantitativeConceptAnalysis} D.~Pavlovic,
\textit{Quantitative concept analysis}, 
in Formal Concept Analysis, F.~Domenach D.~Ignatov and J.~Poelmans, Lecture Notes in Computer Science, 7278, Springer (2012), 260--277.  \url{http://arxiv.org/abs/1204.5802}
%
\bibitem{Rutten:WeightedColimitsMetricSpaces}
J.~J.~M.~M.~Rutten,
\textit{Weighted colimits and formal balls in generalized metric spaces}, Topology and its Applications \textbf{89} (1998) 179--202.
%
\bibitem{Trimble:mathoverflow} T.~Trimble,
reply to \textit{Completion of a category} at mathoverflow,
\url{http://mathoverflow.net/questions/59291/completion-of-a-category}
\end{thebibliography}
\end{document}